\newcommand{\RR}{\mathbb{R}}
\newcommand{\kk}{\mathbb{k}}
\newcommand{\NN}{\normalfont\mathbb{N}}
\newcommand{\ZZ}{{\normalfont\mathbb{Z}}}
\newcommand{\dd}{{\normalfont\mathbf{d}}}
\newcommand{\mm}{{\normalfont\mathfrak{m}}}
\newcommand{\QQ}{\mathbb{Q}}
\newcommand{\bn}{{\normalfont\mathbf{n}}}
\newcommand{\bx}{{\normalfont\mathbf{x}}}
\newcommand{\bm}{{\normalfont\mathbf{m}}}
\newcommand{\vol}{\normalfont\text{vol}}
\newcommand{\II}{\mathbb{I}}
\newcommand{\Rees}{\mathcal{R}}
\newcommand{\ee}{{\normalfont\mathbf{e}}}
\newcommand{\JJ}{\mathbb{J}}
\newcommand{\BB}{\mathbb{B}}
\newcommand{\bJ}{\mathbf{J}}
\newcommand{\bK}{\mathbf{K}}
\newcommand{\FF}{\normalfont\mathcal{F}}
\newcommand{\Proj}{\normalfont\text{Proj}}
\newcommand{\Vol}{{\normalfont\text{Vol}}}
\newcommand{\MV}{{\normalfont\text{MV}}}
\newcommand{\ind}{{\normalfont\text{ind}}}
\newcommand{\Con}{{\normalfont\text{Con}}}
\newcommand{\multProj}{\normalfont\text{MultiProj}}
\def\f0{\mathbf{0}}
\def\fn{\mathbf{n}}
\def\fm{\mathbf{m}}
\def\fd{\mathbf{d}}
\def\ls{\leqslant}
\def\gs{\geqslant}
\def\*{{\color{red}\blacksquare}}
\newtheorem{theorem}{Theorem}[section]
\newtheorem{headthm}{Theorem}
\newaliascnt{headcor}{headthm}
\newaliascnt{headthmdef}{headthm}
\newaliascnt{headconj}{headthm}
\newaliascnt{corollary}{theorem}
\newaliascnt{lemma}{theorem}
\newtheorem{lemma}[lemma]{Lemma}
\newaliascnt{conjecture}{theorem}
\newaliascnt{proposition}{theorem}
\newtheorem{proposition}[proposition]{Proposition}
\theoremstyle{definition}
\newaliascnt{definition}{theorem}
\newtheorem{definition}[definition]{Definition}
\newaliascnt{notation}{theorem}
\newtheorem{notation}[notation]{Notation}
\newaliascnt{example}{theorem}
\newaliascnt{examples}{theorem}
\newaliascnt{remark}{theorem}
\newtheorem{remark}[remark]{Remark}
\newaliascnt{problem}{theorem}
\newaliascnt{question}{theorem}
\newaliascnt{convention}{theorem}
\newaliascnt{construction}{theorem}
\newaliascnt{setup}{theorem}
\newtheorem{setup}[setup]{Setup}
\newaliascnt{algorithm}{theorem}
\newaliascnt{observation}{theorem}
\newaliascnt{defprop}{theorem}
\def\equationautorefname~#1\null{(#1)\null}
\def\sectionautorefname~#1\null{Section #1\null}
\def\subsectionautorefname~#1\null{\S #1\null}
\begin{document}

\title[Convex bodies and graded families of monomial ideals]{Convex bodies and  graded families of monomial ideals}

\author[Yairon Cid-Ruiz]{Yairon Cid-Ruiz}
\address[Cid-Ruiz]{Department of Mathematics: Algebra and Geometry, Ghent University, Krijgslaan 281 – S25, 9000 Ghent, Belgium}
\email{Yairon.CidRuiz@ugent.be}
\urladdr{https://ycid.github.io}

\author[Jonathan Monta\~no]{Jonathan Monta\~no$^{*}$}
\address[Monta\~no ]{Department of Mathematical Sciences  \\ New Mexico State University  \\PO Box 30001\\Las Cruces, NM 88003-8001}
\thanks{$^{*}$ The second  author is  supported by NSF Grant DMS \#2001645.}
\email{jmon@nmsu.edu}

\date{\today}
\keywords{}
\subjclass[2010]{Primary 52A39, 13H15; Secondary 05E40, 11H06.}

\begin{abstract}
We show that the mixed volumes of arbitrary convex bodies are equal to  mixed multiplicities of  graded families of monomial ideals, and  to  normalized limits of mixed multiplicities of monomial ideals. This result evinces the close relation between  the theories of  mixed volumes from convex geometry and mixed multiplicities from commutative algebra.

\end{abstract}

\maketitle

\section{Introduction}

The connection between   volumes of convex bodies and algebraic-geometric invariants  has long been explored by researchers and it has led to numerous applications in various fields of mathematics.   
To highlight some of these we have  Bernstein–Koushnirenko–Khovanskii Theorem  \cite{bernshtein, kho78, kouchnirenko}, Hu's proof of the log-concavity of  characteristic polynomials of matroids \cite{Huh12}, and the theory of Newton-Okounkov bodies \cite{KAVEH_KHOVANSKII, lazarsfeld09} and its applications to limits in commutative algebra \cite{cutkosky2014}. 

\medskip

 The goal of this paper is to expand  on this fruitful research line by showing  that the mixed volumes of arbitrary convex bodies are equal to  mixed multiplicities of  graded families of monomial ideals, and also to normalized limits of  mixed multiplicities of   monomial ideals  (\autoref{thmC}). This  is an extension of the main result in  \cite{TRUNG_VERMA_MIXED_VOL} where the case of lattice polytopes is treated.  Our proof is  based on two intermediate results of interest in their own right (\autoref{thmA} and \autoref{thmB}). 

\medskip

Let $R$ be a  $d$-dimensional standard graded polynomial ring over a field $\kk$  and  $\mm=[R]_+$ its graded irrelevant ideal. 
For  homogeneous ideals $J_1,\ldots, J_r$ and an $\mm$-primary homogeneous ideal $I$ there exist integers $e_{(d_0,\fd)}(I|J_1, \ldots, J_r)\gs 0$  for every $d_0\in \NN$, $\fd=(d_1,\ldots, d_r)\in \NN^r$ with $d_0+|\fd|=d-1$, called the {\it mixed multiplicities} of $J_1,\ldots, J_r$ with respect to  $I$, such that  
$$\lim_{m\rightarrow \infty}\frac{\dim_\kk \left( J_1^{mn_1}\cdots J_r^{mn_r}/I^{n_0m}J_1^{mn_1}\cdots J_r^{mn_r} \right)}{m^{d}}=\sum_{d_0+|\fd|=d-1}\frac{e_{(d_0,\fd)}(I|J_1,\ldots, J_r )}{(d_0+1)!d_1!\cdots d_r!}n_0^{d_0+1}n_1^{d_1}\cdots n_r^{d_r},$$
for every $n_0,n_1,\ldots, n_r\gs 0$ (see \cite{TRUNG_VERMA_SURVEY} for a survey). 
 On the other hand,  by Minkowski's Theorem,  for  a sequence of convex bodies $K_1,\ldots,K_r$   in $\RR^d$,  the {\it mixed volumes} $\MV_d(K_{\rho_1},\ldots,K_{\rho_d})$ of sequences  $(K_{\rho_1},\ldots,K_{\rho_d})$ of convex bodies with $1\ls \rho_1,\ldots,\rho_d\ls r$ satisfy the following equation
$$\Vol_{d}(\lambda_1 K_1+\cdots+\lambda_r K_r) = \sum_{\substack{\dd =(d_1,\ldots,d_r) \in \NN^{r}\\ d_1+\cdots+d_r=d}} \frac{1}{d_1!\cdots d_r!} \MV_d\left(\bigcup_{i=0}^r\bigcup_{j=1}^{d_i} \{K_i\}\right)\lambda_{1}^{d_1}\cdots \lambda_{r}^{d_r}$$
for every $\lambda_1,\ldots, \lambda_r\gs 0$  (see \cite[Theorem 3.3, page 116]{EWALD}). The relation between these two sets of invariants is  established in \cite{TRUNG_VERMA_MIXED_VOL} where the authors show that mixed volumes of lattice polytopes coincide with the mixed multiplicities of certain monomial ideals. 
However, this relation does not extend to arbitrary convex bodies as the associated sequences of ideals are no longer powers of ideals but rather  (not necessarily Noetherian) graded families  of ideals.

\medskip

A sequence of ideals $\II=\{I_n\}_{n\in \NN}$ is a {\it graded family} if $I_0=R$ and $I_iI_j\subseteq I_{i+j}$ for every $i,j\in \NN$. 
The family is {\it Noetherian} if the corresponding Rees algebra $\Rees(\II) =\oplus_{n\in \NN}I_nt^n\subseteq R[t]$ is Noetherian. The study of mixed multiplicities of graded families  was pioneered by Cutkosky-Sarkar-Srinivasan \cite{cutkosky2019} for the case of \emph{$\mm$-primary filtrations} (in  more general rings), that is, when each $I_n$ is $\mm$-primary and $I_{n+1}\subseteq I_n$ for every $ n \in \NN$.  
Their strategy is to first show the existence of these multiplicities for Noetherian filtrations, and then pass to  arbitrary filtrations using the theory of Newton-Okounkov bodies \cite{KAVEH_KHOVANSKII} (see also  \cite{cutkosky2014} and \cite{lazarsfeld09}).  
In our first result, we prove the existence of mixed multiplicities for arbitrary graded families of monomial ideals under a mild assumption. Our approach differs from the one of \cite{cutkosky2019} in that we exploit Minkowski's Theorem to show the existence of the  polynomial leading to the definition of mixed-multiplicities. 

\medskip

In order to present  our first result  we need to introduce some prior notation.	Let $\II = \{I_n\}_{n \in \NN}$ be a  (not necessarily Noetherian) graded family of $\mm$-primary monomial ideals and  let $\JJ(1)=\{J(1)_n\}_{n\in \NN}, \ldots, \JJ(r)=\{J(r)_n\}_{n\in \NN}$ be (not necessarily Noetherian) graded families of monomial ideals in $R$. 
We further assume that the degrees of generators of $J(i)_n$ are bounded by a linear function on $n$ for each $1\ls i\ls r$. 
We note that the latter condition is similar to others that have been considered in previous works regarding limits of graded families of ideals (see, e.g., \cite[Theorem 6.1]{cutkosky2014}).

\begin{headthm}[{\autoref{thm_general_case}, \autoref{lem_non_neg_general}}]\label{thmA}
	Under the notations and assumptions above, the function 
	$$
	F(n_0,n_1,\ldots,n_r) = \lim_{m\to \infty} \frac{\dim_\kk\big(J(1)_{mn_1}\cdots J(r)_{mn_r}\,\big/\,I_{mn_0}J(1)_{mn_1}\cdots J(r)_{mn_r}\big)}{m^d}
	$$
	is equal to a homogeneous polynomial $G(n_0,\bn) = G(n_0,n_1,\ldots,n_r)$ of total degree $d$ with non-negative real coefficients for all $n_0 \in \NN$ and $\bn = (n_1,\ldots,n_r) \in \NN^r$. 
	Additionally, $G(n_0,\bn)$ has no term of the form $\alpha \bn^\dd = \alpha n_1^{d_1}\cdots n_r^{d_r}$ with $0 \neq \alpha \in \RR$, $\dd =(d_1,\ldots,d_r) \in \NN^r$ and $|\dd|=d$.
	
	Furthermore,  the coefficients of  the polynomial $G(n_0,\bn)$ can be explicitly described in terms of mixed volumes of  certain Newton-Okounkov bodies.
\end{headthm}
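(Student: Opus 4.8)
The first step is to fix a suitable valuation. Since we are dealing with monomial ideals in the polynomial ring $R = \kk[x_1,\ldots,x_d]$, the natural choice is a term order on the monomials giving rise to a valuation $v\colon R\setminus\{0\}\to \ZZ^d$ refining the exponent grading; concretely, for a monomial ideal $I$ the set $\{v(f) : 0\neq f\in I\}$ is the set of lattice points in the \emph{Newton polyhedron} (the convex hull of the exponent vectors of the generators together with the positive orthant). The key point is that for an $\mm$-primary monomial ideal the complement of this set in $\NN^d$ is finite, and for a product $I\cdot J$ of monomial ideals the associated set of values is (the lattice points in) the Minkowski sum of the two Newton polyhedra. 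Thus $\dim_\kk\big(J(1)_{mn_1}\cdots J(r)_{mn_r}/I_{mn_0}J(1)_{mn_1}\cdots J(r)_{mn_r}\big)$ counts lattice points in a region that is, up to a bounded error, the difference of two Minkowski sums of rational polyhedra scaled by $m$.

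The second step is to pass to the limit. Set, for each family, $\Gamma(\II) = \bigcup_{n}\{n\}\times v(I_n\setminus\{0\})\subseteq \NN\times\NN^d$ and similarly for each $\JJ(i)$. The graded-family condition $I_iI_j\subseteq I_{i+j}$ makes each $\Gamma$ a sub-semigroup, and the hypothesis that the generator degrees of $J(i)_n$ grow at most linearly in $n$ (together with $\mm$-primariness of $\II$) guarantees that the associated cones are full-dimensional and that the Okounkov-body machinery of \cite{KAVEH_KHOVANSKII} applies. One obtains closed convex bodies $\Delta(\II)(n_0)$, $\Delta(\JJ(i))(n_i)$ in $\RR^d$ — the fibers of the Okounkov cones — such that
$$
\lim_{m\to\infty}\frac{\#\big(\text{lattice points counted above}\big)}{m^d} = \vol_d\!\Big(\Delta(\JJ(1))(n_1)+\cdots+\Delta(\JJ(r))(n_r)\Big) - \vol_d\!\Big(\Delta(\II)(n_0)+\Delta(\JJ(1))(n_1)+\cdots+\Delta(\JJ(r))(n_r)\Big),
$$
where by homogeneity of Okounkov bodies $\Delta(\JJ(i))(n_i) = n_i\,\Delta(\JJ(i))(1)$ and likewise for $\II$. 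This identifies $F(n_0,\bn)$ as a difference of volumes of Minkowski combinations, which by Minkowski's theorem (the polynomial expansion quoted in the introduction, applied to the $r+1$ bodies $\Delta(\II)(1), \Delta(\JJ(1))(1),\ldots,\Delta(\JJ(r))(1)$ with parameters $n_0,n_1,\ldots,n_r$) is a homogeneous polynomial $G(n_0,\bn)$ of degree $d$ with coefficients given by mixed volumes — hence non-negative reals. The absence of a pure $\bn^{\dd}$ term with $|\dd|=d$ is exactly the statement that when $n_0=0$ the two volumes cancel, so $G(0,\bn)=0$; equivalently every monomial of $G$ is divisible by $n_0$. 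Non-negativity of the individual coefficients $e_{(d_0,\fd)}$ then follows because each is, up to a positive factorial constant, a mixed volume $\MV_d$ of convex bodies, which is always $\ge 0$.

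\textbf{The main obstacle} is the second step: justifying that the normalized lattice-point count converges, and converges to the volume of the fiber of the Okounkov cone, when the graded families are \emph{not} Noetherian. For a single Noetherian family this is classical, but in general $\Rees(\II)$ need not be finitely generated, so one cannot directly reduce to the graded-ring case. The resolution is to apply the Okounkov-body formalism of Kaveh–Khovanskii directly to the semigroup $\Gamma$ (which is what \cite{cutkosky2014, lazarsfeld09} do), checking that the required hypotheses — the semigroup generates a group of full rank and is contained in a strongly convex cone with the right "base at infinity" property — hold here precisely because $\II$ consists of $\mm$-primary ideals (forcing the fiber cones to be full-dimensional) and because the linear bound on the generator degrees of the $\JJ(i)$ keeps the relevant cones bounded in the appropriate slices. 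A secondary technical point is handling the Minkowski sum of the \emph{value semigroups} versus the value semigroup of the \emph{product} of ideals: one has containment in one direction always, and the reverse up to a bounded discrepancy, which washes out in the $m^{-d}$ normalization; making this precise for monomial ideals (where everything is governed by the combinatorics of Newton polyhedra) is routine but must be done carefully so that the error term is genuinely $o(m^d)$.
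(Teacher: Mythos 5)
Your overall strategy is the right one --- translate the lattice-point count to Okounkov-body volumes, then invoke Minkowski's theorem --- and your arguments for the two side claims (no pure $\bn^{\dd}$ term because $F(0,\bn)=0$, and non-negativity from monotonicity of mixed volumes) match the paper's \autoref{lem_non_neg_general}. But there is a genuine gap in the central volume computation, and it is exactly where the paper's technical work lives.

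The formula
$$
\lim_{m\to\infty}\frac{\#(\text{lattice points})}{m^d}
=\vol_d\!\Big(\textstyle\sum_i\Delta(\JJ(i))(n_i)\Big)
-\vol_d\!\Big(\Delta(\II)(n_0)+\textstyle\sum_i\Delta(\JJ(i))(n_i)\Big)
$$
does not parse: the Newton polyhedra of monomial ideals are unbounded (closed under adding the positive orthant), so both volumes are $+\infty$, and the Okounkov cone of an $\mm$-primary graded family is not strongly admissible in the Kaveh--Khovanskii sense (its intersection with $\pi^{-1}(0)$ is the whole orthant, not just the origin), so \cite[Corollary~1.16]{KAVEH_KHOVANSKII} cannot be applied to it directly. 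The paper's fix is to work throughout with \emph{truncated} semigroups
$\Gamma_{n_0,\bn}=\{(\bm,m):\bx^{\bm}\in\bJ_{m\bn},\ |\bm|\le cm(n_0+|\bn|)\}$
and its companion $\widehat{\Gamma}_{n_0,\bn}$, for a uniform constant $c$ produced by \autoref{lem_eq_filt}, which guarantees that $\bJ_{\bn}$ and $I_{n_0}\bJ_{\bn}$ agree above that degree --- so the count equals $\#[\Gamma_{n_0,\bn}]_m-\#[\widehat{\Gamma}_{n_0,\bn}]_m$ with \emph{no} error term at all. Once truncated, both semigroups are strongly admissible and Kaveh--Khovanskii applies. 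Your "up to a bounded error, washes out in $m^{-d}$" phrasing suggests you could recover this with more care, but note the discrepancy would be of order $m^{d}$, not bounded, if the truncation bound is chosen differently for the two ideals.

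A second consequence of the missing truncation is that your first volume term has no $n_0$-dependence, while the paper's $\Vol_d(\Delta(\Gamma_{n_0,\bn}))$ genuinely does: after decomposition (the paper's \autoref{prop_decomp_Gamma_non_mm}) it is the Minkowski combination $n_0 A+n_1B_1+\cdots+n_rB_r$ where $A$ is a \emph{simplex} body coming from the truncation at level $n_0$, not the Okounkov body of $\II$. The difference the paper takes is between $n_0A+\sum n_iB_i$ and $n_0\hat{A}+\sum n_iB_i$ with $A\supset\hat{A}$, which by monotonicity is non-negative term by term; your version would, if forced into bounded bodies, produce the wrong sign or the wrong shape. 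Finally, the decomposition of the truncated semigroup as a levelwise Minkowski sum is not "routine": it is \autoref{prop_decomp_Gamma_non_mm}, whose proof explicitly uses the linear bound $\beta(J(i)_n)\le\beta n$ to redistribute excess degree between factors. That hypothesis is doing real work there, not just keeping cones bounded. In short: right machinery and right endgame, but the construction of the truncated semigroups and the uniform constant $c$, which is the crux, is absent from your proposal.
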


We note that \autoref{thmA} is new even when the graded families  are all $\mm$-primary. In this case,  our theorem is an extension of that of \cite{cutkosky2019} for monomial ideals (also, see \cite{kaveh2014}). 
For this reason, we isolate the $\mm$-primary case in \autoref{thm_m_primary_case}.  

\medskip

The polynomial  $G(n_0, \bn)$ from \autoref{thmA}  can be written as 
	$$
	G(n_0,\bn) = \sum\limits_{\substack{(d_0,\dd)\in \NN^{r+1} \\ d_0+|\dd| = d-1}} \frac{1}{(d_0+1)!\dd!}\, e_{(d_0,\dd)}\left(\II|\JJ(1),\ldots,\JJ(r)\right)\, n_0^{d_0+1}\bn^\dd,
	$$
here if $\dd = (d_1,\ldots,d_r)$ then  $\dd!=d_!!\cdots d_r!$. For each $(d_0,\dd)\in \NN^{r+1}$ with $d_0+|\dd| = d-1$, we define the real numbers $e_{(d_0,\dd)}\left(\II|\JJ(1),\ldots,\JJ(r)\right)\ge 0$ to be the {\it mixed multiplicities} of    $\JJ(1), \ldots, \JJ(r) $ with respect to $\II $ (see \autoref{def_mix_mult_general}).

\medskip

The {\it volume} and {\it multiplicity} of a graded family  of zero dimensional ideals $\BB=\{B_n\}_{n\in \NN}$ in a Noetherian local ring $S$ of dimension $s$ are defined as 
$$\vol_S(\BB)=\limsup_{n\to \infty}\frac{\lambda(S/B_n)}{n^{s}/s!} \quad \text{and}\quad  e_S(\BB)=\lim_{p\to \infty}\frac{e_S(B_p)}{p^{s}},$$ 
respectively; 
where $\lambda(N)$ denotes length of an $S$-module $N$ and $e_S(J)$ denotes the Hilbert-Samuel multiplicity of an ideal $J$. 
Several works prove the equality of these two invariants under  certain assumptions (see \cite{ein2003,must02,lazarsfeld09,cutkosky2013,cutkosky2014}). The general version of the so called {\it Volume = Multiplicity formula} is due to Cutkosky and it is shown on  any $S$ for which the limit in the definition of volume exists \cite{cutkosky2015}. In our next result we show a ``Volume =  Multiplicity formula'' for mixed multiplicities of  graded families of monomial ideals.

\begin{headthm}[\autoref{thm_mult_eq_vol}]\label{thmB}
With the above assumptions and notations, for each $d_0 \in \NN$ and $\dd = (d_1,\ldots,d_r) \in \NN^r$ with $d_0+|\dd| = d-1$, we have the equality 
	$$
	e_{(d_0,\dd)}\left(\II|\JJ(1),\ldots,\JJ(r)\right) \;=\; \lim_{p\to \infty} \frac{e_{(d_0,\dd)}\left(I_p|J(1)_p,\ldots,J(r)_p\right)}{p^d}.
	$$
\end{headthm}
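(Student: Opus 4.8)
The plan is to deduce Theorem~\ref{thmB} from Theorem~\ref{thmA} by a careful rescaling argument: both sides of the claimed identity ought to be obtainable as coefficients of one and the same ``limiting polynomial,'' so the task reduces to matching two polynomial expansions. First I would fix the notation: for each fixed index $p$, the ideals $I_p, J(1)_p,\ldots,J(r)_p$ are genuine homogeneous ideals (with $I_p$ being $\mm$-primary), so by the classical theory of mixed multiplicities recalled in the introduction we have, for all $n_0,\bn\geq 0$,
\[
\lim_{m\to\infty}\frac{\dim_\kk\big(J(1)_p^{mn_1}\cdots J(r)_p^{mn_r}\big/I_p^{mn_0}J(1)_p^{mn_1}\cdots J(r)_p^{mn_r}\big)}{m^d}
=\sum_{d_0+|\dd|=d-1}\frac{e_{(d_0,\dd)}(I_p|J(1)_p,\ldots,J(r)_p)}{(d_0+1)!\,\dd!}\,n_0^{d_0+1}\bn^\dd .
\]
Call the right-hand side $H_p(n_0,\bn)$. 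On the other hand, Theorem~\ref{thmA} produces the homogeneous polynomial $G(n_0,\bn)$ of degree $d$ with $G(n_0,\bn)=F(n_0,\bn)=\lim_{m\to\infty}\dim_\kk(\cdots)/m^d$ where the numerator uses the families $\II,\JJ(1),\ldots,\JJ(r)$ evaluated at $mn_0, mn_1,\ldots,mn_r$.

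The key observation is a monotonicity/comparison between $H_p$ and $G$ after rescaling. Since $\II$ is a graded family we have $I_p^{k}\subseteq I_{pk}$, and similarly $J(i)_p^{k}\subseteq J(i)_{pk}$; therefore for any $n_0,\bn$ the product $I_p^{mn_0}J(1)_p^{mn_1}\cdots J(r)_p^{mn_r}$ is contained in $I_{pmn_0}J(1)_{pmn_1}\cdots J(r)_{pmn_r}$, while $J(1)_p^{mn_1}\cdots J(r)_p^{mn_r}\supseteq J(1)_{pmn_1}^{?}\cdots$ is the wrong direction---so I would instead compare the relevant colength modules directly, using that $A/B$ surjects onto $A'/B'$ whenever $A\supseteq A'$, $B\subseteq B'\cap A'$ appropriately, and keep track of both the numerator and the denominator factors. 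Concretely, the graded-family inclusions give a surjection realizing
\[
\dim_\kk\!\big(J(1)_p^{mn_1}\cdots J(r)_p^{mn_r}\big/I_p^{mn_0}(\cdots)\big)\ \ge\ \dim_\kk\!\big(J(1)_{pmn_1}\cdots J(r)_{pmn_r}\big/I_{pmn_0}(\cdots)\big)
\]
up to controlled error (here the linear-growth hypothesis on the degrees of generators of the $J(i)_n$ is what prevents the ``extra'' generators produced by passing from $J(i)_p^k$ to $J(i)_{pk}$ from contributing at order $m^d$). Dividing by $(pm)^d$ and letting $m\to\infty$ yields $H_p(n_0,\bn)/p^d\ge G(n_0,\bn)$ for all $n_0,\bn\geq 0$. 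For the reverse inequality I would run the complementary estimate, using that $\bigcup_k I_k^{?}$-type constructions and the Newton--Okounkov body description in Theorem~\ref{thmA} let one approximate $I_{pmn_0}$ from inside by powers $I_p^{\lceil mn_0(1-\varepsilon)\rceil}$ for $p$ large, so that $\limsup_p H_p/p^d\le G$ as well; this is precisely the shape of the Volume = Multiplicity arguments of Cutkosky, specialized to the monomial setting where everything is visible on Newton--Okounkov bodies.

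Having established $\lim_{p\to\infty}H_p(n_0,\bn)/p^d=G(n_0,\bn)$ pointwise on $\NN^{r+1}$, I would finish by comparing coefficients. Both $H_p/p^d$ and $G$ are homogeneous of degree $d$, vanish (by Theorem~\ref{thmA} and its classical analogue) on the $\bn^\dd$ with $|\dd|=d$, and are supported on the monomials $n_0^{d_0+1}\bn^\dd$ with $d_0+|\dd|=d-1$. Pointwise convergence of polynomials of bounded degree on a Zariski-dense set such as $\NN^{r+1}$ forces convergence of each coefficient, hence
\[
\lim_{p\to\infty}\frac{e_{(d_0,\dd)}(I_p|J(1)_p,\ldots,J(r)_p)}{p^d}=e_{(d_0,\dd)}(\II|\JJ(1),\ldots,\JJ(r))
\]
for every $(d_0,\dd)$ with $d_0+|\dd|=d-1$, which is the assertion.

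The main obstacle, I expect, is the reverse inequality $\limsup_p H_p/p^d\le G$: the graded-family inclusions only go one way, so approximating the ideals $I_{pmn_0}$, $J(i)_{pmn_i}$ from \emph{inside} by powers of the $p$-th members is not formal and genuinely requires the Newton--Okounkov machinery (or a subadditivity/Fekete argument) together with the linear bound on generator degrees to control the error terms uniformly in $m$ before taking $p\to\infty$. Interchanging the two limits $m\to\infty$ and $p\to\infty$ is the delicate analytic point, and I would handle it exactly as in the proof of Theorem~\ref{thmA}, by passing to the associated convex bodies where the limits become volumes of nested regions and the estimates are transparent.
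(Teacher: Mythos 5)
Your overall architecture coincides with the paper's: establish the pointwise convergence $\lim_{p\to\infty}F_p(n_0,\bn)/p^d=F(n_0,\bn)$ on $\NN^{r+1}$ and then pass to coefficients of polynomials of bounded degree (the paper cites Lemma 3.2 of Cutkosky--Sarkar--Srinivasan for that last step). However, the two places where you defer the work are exactly where the substance of the proof lies, and as sketched one of them would fail. First, the comparison you propose between $\dim_\kk\big(\bJ(p)^{m\bn}/I_p^{mn_0}\bJ(p)^{m\bn}\big)$ and $\dim_\kk\big(\bJ_{pm\bn}/I_{pmn_0}\bJ_{pm\bn}\big)$ cannot be run on the quotient modules directly: the graded-family inclusions $J(i)_p^{k}\subseteq J(i)_{pk}$ and $I_p^{k}\subseteq I_{pk}$ push the numerator and the denominator in the \emph{same} direction, so there is neither a surjection nor an injection between the two quotients, and no ``controlled error'' version of your displayed inequality is available at this level. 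The paper sidesteps this by writing each colength as a difference of two lattice-point counts, $\#[\Gamma_{n_0,\bn}]_m-\#[\widehat{\Gamma}_{n_0,\bn}]_m$, and proving convergence of each count \emph{separately} to the volume of its Newton--Okounkov body; only then is the difference taken.

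Second, the reverse inequality is not obtainable by ``the same argument as Theorem A.'' Applying the Lazarsfeld--Musta\c{t}\u{a}/Kaveh--Khovanskii approximation to the semigroup $\mathfrak{A}_{n_0,\bn}$ built from powers of the $p$-th members yields $\lim_p H_p=\Vol_d(\Delta(\mathfrak{A}_{n_0,\bn}))$, but identifying this volume with $\Vol_d(\Delta(\Gamma_{n_0,\bn}))$ requires the Noetherian property (the existence of $q$ with $I_q^n=I_{nq}$, $J(i)_q^n=J(i)_{nq}$), which is precisely what you do not have. The missing idea is the truncation device: replace $\II,\JJ(1),\ldots,\JJ(r)$ by the Noetherian subfamilies generated by their first $a$ members, show (again via the semigroup approximation theorem) that the truncated Okounkov bodies have volume within $\varepsilon$ of the originals for $a\gg 0$, prove the Noetherian case of the approximation, and then transfer the lower bound to the original families using the containments $I_{a,n}\subseteq I_n$ and $J(i)_{a,n}\subseteq J(i)_n$. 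This two-step $\varepsilon/2$-argument (the paper's Propositions on Noetherian approximation and on truncation) is the core of the proof and is not supplied by a Fekete-type subadditivity remark.
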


With the previous results in hand, we are ready to present  the main result of this paper. 
Here we express mixed volumes of arbitrary convex bodies as  mixed multiplicities of graded families of monomial ideals and as normalized  limits of mixed-multiplicities of ideals. 

\medskip

Now, let us fix the following slightly different notation.  
Let $(K_1,\ldots,K_r)$ be a sequence of convex bodies in $\RR_{\ge 0}^d$ and $K_0 \subset \RR^{d}$ be the convex hull of the points $\mathbf{0}, \ee_1,\ldots,\ee_{d} \in \RR^{d}$, where $\mathbf{0} = (0,\ldots,0) \in \RR^d$ and $\ee_i = (0,\ldots,1,\ldots,0)$ denotes the $i$-th elementary basis vector for $1 \le i \le d$. 
Denote by $\bK$ the sequence of convex bodies $\bK = (K_0,K_1,\ldots,K_r)$. For each $(d_0,\dd) = (d_0,\ldots, d_r)\in \NN^{r+1}$ we let $\bK_{(d_0,\dd)}$ be the multiset $\bK_{(d_0,\dd)} = \bigcup_{i=0}^r\bigcup_{j=1}^{d_i} \{K_i\}$ of $d_i$ copies of $K_i$ for each $0 \le i \le r$. 
Here let $R$ be a $(d+1)$-dimensional standard graded polynomial ring over a field $\kk$ and $\mm = [R]_+$ be its graded irrelevant ideal.  We let $\mathbb{M}$ be the graded family $\mathbb{M}=\{\mm^n\}_{n\in \NN}$.

\begin{headthm}[\autoref{mainCor}]\label{thmC}
Under the notations and assumptions above, there exist graded families of monomial ideals $\JJ(1)\ldots, \JJ(r)$ in $R$ such that, for each $(d_0,\dd) \in \NN^{r+1}$ with $d_0 + \lvert \dd \rvert = d$, we have the equalities 
\begin{align*}
	\MV_d\left(\bK_{(d_0,\dd)}\right) &=  e_{(d_0,\dd)}(\mathbb{M} \mid \JJ(1),\ldots,\JJ(r)) \\
	&= \lim_{p\to \infty} \frac{e_{(d_0,\dd)}(\mm^p \mid J(1)_p,\ldots,J(r)_p)}{p^{d+1}} \;=\; \lim_{p\to \infty} \frac{e_{(d_0,\dd)}(\mm \mid J(1)_p,\ldots,J(r)_p)}{p^{|\dd|}}.
\end{align*}
In particular, when $r = d$, we obtain the equalities
\begin{align*}
	\MV_d(K_1,\ldots,K_d) &=  e_{(0,1,\ldots,1)}(\mathbb{M} \mid \JJ(1),\ldots,\JJ(d)) \\
	&= \lim_{p\to \infty} \frac{e_{(0,1,\ldots,1)}(\mm^p \mid J(1)_p,\ldots,J(d)_p)}{p^{d+1}} \;=\; \lim_{p\to \infty} \frac{e_{(0,1,\ldots,1)}(\mm \mid J(1)_p,\ldots,J(d)_p)}{p^{d}}.
\end{align*}
\end{headthm}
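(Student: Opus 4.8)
The plan is to deduce \autoref{thmC} from \autoref{thmA} and \autoref{thmB} by constructing, from the convex bodies $K_1,\ldots,K_r$, explicit graded families of monomial ideals in the $(d+1)$-dimensional ring $R = \kk[x_0,x_1,\ldots,x_d]$ whose Newton--Okounkov bodies recover the given $K_i$, and whose mixed multiplicities with respect to $\mathbb{M}$ compute the desired mixed volumes. Concretely, for each $1 \le i \le r$, I would set $J(i)_n$ to be the monomial ideal generated by all monomials $x_0^{a_0}x_1^{a_1}\cdots x_d^{a_d}$ with $(a_1,\ldots,a_d) \in n K_i \cap \NN^d$ and $a_0$ large enough (essentially $a_0 + |{\bf a}'| $ bounded by a linear function of $n$, using that $K_i \subset \RR_{\ge 0}^d$ and the ``linear bound on degrees of generators'' hypothesis of \autoref{thmA}); the homogenizing variable $x_0$ together with the simplex $K_0 = \conv(\mathbf{0},\ee_1,\ldots,\ee_d)$ is exactly the device that turns the affine convex bodies $K_i \subset \RR^d$ into homogeneous data in $R$ and accounts for the extra variable. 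The family $\mathbb{M} = \{\mm^n\}$ plays the role of $\II$, and its Newton--Okounkov body is (a translate/normalization of) $K_0$; the $\mm$-primary hypothesis on $\II$ is satisfied since each $\mm^n$ is $\mm$-primary.

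The key steps, in order: (1) write down the families $\JJ(i)$ as above and verify they are graded families of monomial ideals satisfying the linear-growth hypothesis on generator degrees, and that $\mathbb{M}$ is a graded family of $\mm$-primary ideals, so that \autoref{thmA} and \autoref{thmB} apply; (2) identify the Newton--Okounkov body attached to each $\JJ(i)$ (with respect to the coordinate valuation used in the proof of \autoref{thmA}) with $K_i$, up to the standard simplex contribution coming from $K_0$ --- this is where the convex-geometric bookkeeping lives; (3) invoke the ``explicit description in terms of mixed volumes of Newton--Okounkov bodies'' clause of \autoref{thmA} to get $e_{(d_0,\dd)}(\mathbb{M}\mid \JJ(1),\ldots,\JJ(r)) = \MV_d(\bK_{(d_0,\dd)})$, matching the combinatorial structure of Minkowski's theorem (the $\Vol_d(\sum \lambda_i K_i)$ expansion) against the polynomial $G(n_0,\bn)$ from \autoref{thmA}; (4) apply \autoref{thmB} to replace the mixed multiplicity of the families by the normalized limit $\lim_{p\to\infty} e_{(d_0,\dd)}(\mm^p\mid J(1)_p,\ldots,J(r)_p)/p^{d+1}$; (5) establish the last equality $\lim_p e_{(d_0,\dd)}(\mm^p\mid \cdots)/p^{d+1} = \lim_p e_{(d_0,\dd)}(\mm\mid \cdots)/p^{|\dd|}$ by a rescaling/homogeneity argument for mixed multiplicities of ideals (replacing $\mm^p$ by $\mm$ changes the $n_0$-degree scaling, and since $d_0 + |\dd| = d$ corresponds to the top-dimensional piece the $\mm$-factor contributes a fixed power); (6) specialize to $r = d$, $\dd = (1,\ldots,1)$ to obtain the classical mixed volume $\MV_d(K_1,\ldots,K_d)$.

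The main obstacle I expect is step (2): matching the Newton--Okounkov body of the constructed monomial family $\JJ(i)$ precisely with $K_i$, and correctly handling the passage from the affine setting in $\RR^d$ to the homogeneous setting in the $(d+1)$-variable ring. One must choose the valuation on $R$ (a lexicographic or weight order on the monomials, as used implicitly in \autoref{thmA}) so that the limiting body $\Delta(\JJ(i)) = \bigcup_n \frac{1}{n}\{\text{exponent vectors of } I^{(i)}_n\}$, after projecting away the $x_0$-coordinate, is exactly $K_i$; the linear bound on generator degrees guarantees this union is a bounded convex body. A secondary subtlety is that the Minkowski expansion in the introduction is indexed by $\dd$ with $|\dd| = d$ while \autoref{thmA}'s polynomial $G(n_0,\bn)$ is indexed by $(d_0,\dd)$ with $d_0 + |\dd| = d$ (note $R$ now has dimension $d+1$, so ``$d$'' there is ``$d+1-1$''); one has to check that the coefficient of $n_0^{d_0+1}\bn^\dd$ in $G$, interpreted via mixed volumes of $(K_0,K_1,\ldots,K_r)$ with the stated multiplicities, reproduces $\MV_d(\bK_{(d_0,\dd)})$ --- essentially because $K_0$ is the standard simplex and mixed volumes with copies of the standard simplex recover lower-dimensional mixed volumes of the remaining bodies. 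Once the dictionary of step (2) is set up cleanly, the remaining steps are bookkeeping with the already-established \autoref{thmA} and \autoref{thmB}.
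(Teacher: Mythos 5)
Your overall scaffolding matches the paper's: the same homogenization construction of the families $\JJ(i)$ from the cones over $\widetilde{K_i}$, the use of \autoref{thmB} to convert between the mixed multiplicity of the families and the normalized limit of finite-level mixed multiplicities, and the rescaling identity $e_{(d_0,\dd)}(\mm^p\mid J(1)_p,\ldots,J(r)_p)=p^{d_0+1}e_{(d_0,\dd)}(\mm\mid J(1)_p,\ldots,J(r)_p)$ (which the paper obtains by comparing the polynomials $G_1(pn_0,\bn)=G_2(n_0,\bn)$). The divergence is in how the central equality $\MV_d\bigl(\bK_{(d_0,\dd)}\bigr)=e_{(d_0,\dd)}(\mathbb{M}\mid\JJ(1),\ldots,\JJ(r))$ is established. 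The paper does \emph{not} identify the Newton--Okounkov bodies of \autoref{thm_general_case} with the $K_i$ at all: it forms the lattice polytopes $K_i(p)$ spanned by the exponent vectors of the generators of $J(i)_p$, invokes the Trung--Verma theorem at each finite level $p$ to get $\MV_d\bigl(\bK(p)_{(d_0,\dd)}\bigr)=e_{(d_0,\dd)}(\mm\mid J(1)_p,\ldots,J(r)_p)$, and uses Hausdorff convergence $\frac1p K_i(p)\to K_i$ together with continuity and multilinearity of mixed volumes (\autoref{lemma_aprox_MV}) to pass to the limit; \autoref{thmB} then converts that limit into $e_{(d_0,\dd)}(\mathbb{M}\mid\JJ(1),\ldots,\JJ(r))$.

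Your proposed route --- identify $\Delta(\Gamma_{0,\ee_i})$ with (a normalization of) $\widetilde{K_i}$ and match the explicit formula of \autoref{thm_general_case} against Minkowski's expansion --- has a genuine gap at exactly the step you flag as ``bookkeeping''. In the formula of \autoref{thm_general_case} the coefficient is the \emph{difference} $\MV_{d+1}\bigl(\Delta(\Gamma)_{(d_0+1,\dd)}\bigr)-\MV_{d+1}\bigl(\Delta(\widehat{\Gamma})_{(d_0+1,\dd)}\bigr)$ of $(d+1)$-dimensional mixed volumes of \emph{truncated} bodies: $\pi_1(\Delta(\Gamma_{0,\ee_i}))$ is the Newton polyhedron of the family cut off by $\{|\bm|\le c\}$, not $\widetilde{K_i}$ itself, and the slot corresponding to $\mathbb{M}$ is occupied by the nested pair $\Delta(\Gamma_{1,\mathbf{0}})\supset\Delta(\widehat{\Gamma}_{1,\mathbf{0}})$ (a simplex of size $c$ and that simplex with a unit corner removed), not by $K_0$. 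Since mixed volumes are not additive over set differences, reducing this difference of $(d+1)$-dimensional mixed volumes to the $d$-dimensional mixed volume $\MV_d(\bK_{(d_0,\dd)})$ is precisely a Trung--Verma-type dictionary that must be proved, not just checked; it is the mathematical heart of the theorem and the reason the paper routes through the finite-level lattice-polytope case where that dictionary is already known. Without supplying that computation, your argument does not close.
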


Finally, we briefly describe the content of the paper. 
In \autoref{SectionPrelim} we set up the notation and include some preliminary results that are used in the rest of the paper. In  \autoref{sec_Mixed_Mult} we include the proof of \autoref{thmA} and in \autoref{subsect_Vol_eq_Mult} the one of  \autoref{thmB}. 
Lastly, \autoref{sec_mixed_vol_mixed_mult}  includes the proof of our main result \autoref{thmC}.

\section{Notation and Preliminaries}\label{SectionPrelim}

In this section, we set up the notation that is used throughout the article. We also include some preliminary information needed for our results.

For a vector $\fn=(n_1,\ldots, n_r)\in \NN^r$ we denote by $|\fn|$ the sum of its entries. We also denote by $\mathbf{0}$ and $\mathbf{1}$ the vectors $(0,\ldots, 0)\in \NN^r$ and $(1,\ldots, 1)\in \NN^r$, respectively. 
For $\fn=(n_1\ldots, n_r)$ and $\fm=(m_1,\ldots, m_r)$ in $\NN^r$ we write $\fn\gs \fm$ if $n_i\gs m_i$ for every $1\ls i\ls r$. Moreover, we write $\fn\gg \mathbf{0}$ if $n_i\gg 0$ for every $1\ls i\ls r$. We also use the abbreviations $\fn^\fm=n_1^{m_1}\cdots n_r^{m_r}$ and $\fn!=n_1!\cdots n_r!$.

Below we recall the definitions of graded families of ideals and filtrations of ideals.

\begin{definition}
	A \emph{graded family} of ideals $\{I_i\}_{i \in \NN}$ in a ring $R$ is a family of ideals indexed by the natural numbers such that $I_0 = R$ and $I_iI_j  \subset I_{i+j}$ for all $i, j \in \NN$. 
	\begin{enumerate}[(i)]
		\item If $(R,\mm)$ is a local ring (or a positively graded ring with $\mm = [R]_+$) and $I_i$ is $\mm$-primary for $i > 0$, then we  say that $\{I_i\}_{i \in \NN}$ is a \emph{graded family of $\mm$-primary ideals}.
		
		\item If we have the inclusion $I_i \supseteq I_{i+1}$ for all $i \in \NN$, then we  say that $\{I_i\}_{i \in \NN}$ is a \emph{filtration of ideals} in $R$.
		
		\item We say that $\{I_i\}_{i \in \NN}$ is \emph{Noetherian}  when the corresponding Rees algebra $\bigoplus_{i \in \NN} I_it^i \subset R[t]$ is Noetherian.
		
		\item When $R=\kk[x_1,\ldots,x_d]$ is a standard graded polynomial ring over a field $\kk$ and each $I_i$ is a monomial ideal, we say that $\{I_i\}_{i \in \NN}$ is a \emph{graded family of monomial ideals}.
 	\end{enumerate}	
\end{definition}

\subsection{Mixed volumes of convex bodies}\label{sub_mixed_vol}

Let $\bK = (K_1,\ldots,K_r)$ be a sequence of convex bodies in $\RR^d$.
For any sequence $\lambda = (\lambda_1,\ldots,\lambda_r) \in \NN^r$ of non-negative integers, we denote by $\lambda \bK$ the Minkowski sum $\lambda \bK := \lambda_1 K_1+\cdots+\lambda_r K_r$ and by $\bK_\lambda$ the multiset $\bK_\lambda := \bigcup_{i=1}^r\bigcup_{j=1}^{\lambda_i} \{K_i\}$ of $\lambda_i$ copies of $K_i$ for each $1 \le i \le r$.

For any convex body $K \subset \RR^d$, we denote by $\Vol_{d}(K)$ the $d$-dimensional volume.
The following important and classical theorem says that the volume $\Vol_{d}(\lambda \bK)$ of the convex body $\lambda \bK$ is a polynomial of degree $d$ in $\lambda$ (see \cite[Theorem 3.2, page 116]{EWALD}).
For more details regarding the topic of mixed volumes the reader is referred to \cite[Chapter IV]{EWALD}.

\begin{theorem}[Minkowski]\label{Minkowski}
	\label{thm_Minkowski_formula}
	$\Vol_{d}(\lambda \bK)$ is a homogeneous polynomial of degree $d$ that satisfies 	
	$$
	\Vol_{d}(\lambda_1 K_1+\cdots+\lambda_r K_r) = \sum_{\rho_1=1,\ldots,\rho_d=1}^r V(K_{\rho_1},\ldots,K_{\rho_d})\lambda_{\rho_1},\ldots,\lambda_{\rho_d},
	$$
for certain coefficients $V(K_{\rho_1},\ldots,K_{\rho_d})$,	where the summation is carried out independently over the $\rho_i$ for $1 \le i \le d$.
\end{theorem}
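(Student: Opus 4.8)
The statement is classical (Minkowski's theorem), and the proof I would reconstruct proceeds in two stages: first polynomiality for polytopes, established by induction on the dimension $d$; then the general case by approximating convex bodies by polytopes. The tools are the support function $h_K(u)=\max_{x\in K}\langle x,u\rangle$ of a convex body $K\subset\RR^d$, the face $K^u=\{x\in K:\langle x,u\rangle=h_K(u)\}$ in direction $u$, and their compatibility with Minkowski addition and scaling: $h_{\lambda\bK}=\sum_{i}\lambda_ih_{K_i}$ and $(\lambda\bK)^u=\sum_i\lambda_iK_i^u$ for every $\lambda=(\lambda_1,\ldots,\lambda_r)\in\RR_{\ge 0}^r$.

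First I would treat the case where each $K_i=P_i$ is a polytope, assuming $\dim(P_1+\cdots+P_r)=d$ since otherwise $\Vol_{d}(\lambda\bK)\equiv 0$. Let $\mathcal{N}\subset\RR^d$ be the finite set of outer unit normals to the facets of $P_1+\cdots+P_r$. For every $\lambda\in\RR_{\ge 0}^r$ the normal fan of $\lambda\bK$ coarsens that of $P_1+\cdots+P_r$, so each facet of $\lambda\bK$ has its outer normal in $\mathcal{N}$, and the corresponding face $(\lambda\bK)^u=\sum_i\lambda_iP_i^u$ lies in the hyperplane $\{\langle x,u\rangle=h_{\lambda\bK}(u)\}$. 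Applying the divergence theorem to the identity vector field on $\lambda\bK$ yields
$$
\Vol_{d}(\lambda\bK)=\frac1d\sum_{u\in\mathcal{N}}h_{\lambda\bK}(u)\,\Vol_{d-1}\big((\lambda\bK)^u\big),
$$
with the convention that a summand is $0$ when $\dim(\lambda\bK)^u<d-1$. Here $h_{\lambda\bK}(u)=\sum_i\lambda_ih_{P_i}(u)$ is linear in $\lambda$, and $(\lambda\bK)^u=\sum_i\lambda_iP_i^u$ is, after a $\lambda$-dependent translation into a fixed copy of $\RR^{d-1}$, a Minkowski sum of the polytopes $P_i^u$; by the inductive hypothesis in dimension $d-1$ (formulated for arbitrary, possibly lower-dimensional, polytopes) $\Vol_{d-1}\big((\lambda\bK)^u\big)$ is a homogeneous polynomial of degree $d-1$ in $\lambda$. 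Hence the right-hand side is a homogeneous polynomial of degree $d$ in $\lambda$, and the base case $d\le 1$ is immediate from $\Vol_1(\sum_i\lambda_iP_i)=\sum_i\lambda_i\Vol_1(P_i)$.

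Next I would pass to arbitrary convex bodies $K_1,\ldots,K_r$ by choosing polytopes $P_i^{(k)}\to K_i$ in the Hausdorff metric. Minkowski addition, scaling, and $\Vol_{d}(\cdot)$ are Hausdorff-continuous, so for each fixed $\lambda\in\RR_{\ge 0}^r$ the homogeneous degree-$d$ polynomials $f_k(\lambda):=\Vol_{d}(\lambda_1P_1^{(k)}+\cdots+\lambda_rP_r^{(k)})$ converge to $\Vol_{d}(\lambda\bK)$ as $k\to\infty$. Since a polynomial of degree $\le d$ in $r$ variables is recovered linearly from its values on the finite grid $\{0,1,\ldots,d\}^r$, the coefficient vectors of the $f_k$ converge, so $\lambda\mapsto\Vol_{d}(\lambda\bK)$ agrees on $\RR_{\ge 0}^r$ with a polynomial of degree $\le d$; letting $k\to\infty$ in $f_k(t\lambda)=t^df_k(\lambda)$ shows it is homogeneous of degree $d$. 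Rewriting this homogeneous polynomial in its fully polarized (symmetric) form then produces symmetric coefficients $V(K_{\rho_1},\ldots,K_{\rho_d})$ realizing the expansion in the statement.

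The main obstacle is the bookkeeping inside the polytope case: one must check carefully that $\mathcal{N}$ is finite and independent of $\lambda$ — including degenerate $\lambda$, where faces collapse and the normal fan only coarsens — that the face of a Minkowski sum is the Minkowski sum of the corresponding faces, and that the pyramid/divergence decomposition is valid uniformly in $\lambda$ so that the inductive hypothesis can be fed in inside each hyperplane $u^\perp$. Once these points are settled the induction runs without incident, and the approximation step is routine given the Hausdorff-continuity of volume and Minkowski addition together with the density of polytopes among convex bodies.
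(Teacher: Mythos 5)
The paper does not prove this statement: it is quoted as a classical theorem of Minkowski with a citation to \cite[Theorem 3.2, page 116]{EWALD}, so there is no internal proof to compare against. Your reconstruction is the standard textbook argument from the cited sources (Ewald, Ch.~IV; Schneider, \S 5.1): induction on $d$ for polytopes via the facet decomposition $\Vol_d(Q)=\tfrac1d\sum_u h_Q(u)\Vol_{d-1}(Q^u)$ together with $h_{\lambda\bK}=\sum_i\lambda_ih_{K_i}$ and $(\lambda\bK)^u=\sum_i\lambda_iK_i^u$, followed by Hausdorff approximation and the observation that pointwise limits of degree-$\le d$ polynomials on a spanning grid are polynomials. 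The argument is sound; the one bookkeeping point you flag (degenerate $\lambda$ where $\lambda\bK$ drops dimension) does resolve as you hope, since if $\lambda\bK$ lies in a hyperplane with normal $u_0$ then both $P^{u_0}$ and $P^{-u_0}$ contain a translate of $\lambda\bK$ and hence are facets of $P_1+\cdots+P_r$, so $\pm u_0\in\mathcal{N}$ and the two contributions cancel.
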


\autoref{Minkowski}  leads to the following definition (see \cite[Theorem 3.3, page 116]{EWALD}).
\begin{definition}
	The \emph{mixed volume} of $d$ convex bodies $K_1,\ldots,K_d \subset \RR^d$ is defined by 
	$$
	\MV_d(K_1,\ldots,K_d) \,:=\, d!V(K_1,\ldots,K_d).
	$$
\end{definition}

Note that under the current notations we have the following equation
\begin{equation}
	\label{eq_volume_poly}
	\Vol_{d}(\lambda \bK) = \sum_{\substack{\dd \in \NN^{r}\\\lvert \dd \rvert=d}}\; \frac{1}{\dd!}\,\MV_d(\bK_\dd) \, \lambda^\dd.
\end{equation}

\subsection{Semigroups, Newton-Okounkov bodies, and Limits of Lengths}\label{sub_NOB}

In this subsection, we describe the notions and methods of Newton-Okounkov bodies and recall some important results from \cite{KAVEH_KHOVANSKII}.

\smallskip

Here we  use a slightly simpler setting.
Suppose that $S \subset \ZZ^{d+1}$ is a semigroup in $\ZZ^{d+1}$.
Fix a linear map $\pi  : \RR^{d+1} \rightarrow \RR$ with integral coefficients, that is $\pi(\ZZ^{d+1}) \subset \ZZ$.

Let $L = L(S)$ be the linear subspace of $R^{d+1}$ which is generated by $S$.
Let $M = M(S)$ be the rational half-space $M(S):= L(S) \cap \pi^{-1}(\RR_{\ge 0})$, and let $\partial M_\ZZ = \partial M \cap \ZZ^{d+1}$.
Let $\Con(S) \subset L(S)$ be the closed convex cone which is the closure of the set of all linear combinations $\sum_i \lambda_is_i$ with $s_i \in S$ and $\lambda_i \ge 0$.
Let $G(S) \subset L(S)$ be the group generated by $S$.

We say that the pair $(S,M)$ is \emph{admissible} if $S \subset M$;
additionally, if $\Con(S)$ is strictly convex and intersects the space $\partial M$ only at the origin, then $(S,M)$ is called a \emph{strongly admissible} pair (see \cite[Definition 1.9]{KAVEH_KHOVANSKII}).

Following \cite{KAVEH_KHOVANSKII}, when $(S,M)$ is an admissible pair we fix the following notation:
\begin{itemize}[--]
	\item $[S]_k := S \cap \pi^{-1}(k)$.
	\item $m = \ind(S,M) := \left[\ZZ : \pi(G(S))\right]$.
	\item $\ind(S,\partial M) := \left[\partial M_\ZZ : G(S) \cap \partial M\right]$.
	\item $\Delta(S, M) := \Con(S) \cap \pi^{-1}(m)$ \quad  (the Newton-Okounkov body of $(S,M)$).
	\item $q = \dim(\partial M)$.
	\item $\Vol_q(\Delta(S,M))$ is the \emph{integral volume} of $\Delta(M,S)$ (see \cite[Definition 1.13]{KAVEH_KHOVANSKII}); this volume is computed using the
	translation of the \emph{integral measure} on $\partial M$.
\end{itemize}

The following result is of fundamental importance in our approach.

\begin{theorem}[{Kaveh-Khovanskii, \cite[Corollary 1.16]{KAVEH_KHOVANSKII}}]\label{thm_limit_KK}
	Suppose that $(S,M)$ is strongly admissible. 
	Then 
	$$
	\lim_{k\to \infty} \frac{\#[S]_{km}}{k^q} \;=\; \frac{\Vol_q(\Delta(S,M))}{\ind(S,\partial M)}.
	$$
\end{theorem}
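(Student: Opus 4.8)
The plan is to squeeze $S$ between two ``saturated'' semigroups whose slices can be counted by an Ehrhart-type estimate on dilates of the Newton--Okounkov body $\Delta(S,M)$. We may assume $S\neq\{\mathbf 0\}$, so $L:=L(S)\neq 0$ and $q=\dim L-1\geq 0$; the case $q=0$ is immediate (both sides equal $1$), so assume $q\geq 1$. Strong admissibility forces $\pi|_L\not\equiv 0$ and indeed $\pi(s)>0$ for every $\mathbf 0\neq s\in S$, since otherwise such an $s$ would lie in $\Con(S)\cap\partial M=\{\mathbf 0\}$. The group $G:=G(S)$ has rank $q+1$, and because $\Con(S)$ is a closed strictly convex cone we have $\Con(S)\cap\pi^{-1}(km)=k\Delta$ for every integer $k\geq 0$, where $\Delta:=\Delta(S,M)$; strict convexity (via $\Con(S)\cap\partial M=\{\mathbf 0\}$) makes $\Delta$ a $q$-dimensional compact convex body in the affine hyperplane $\pi^{-1}(m)\cap L$.

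\emph{Step 1: counting slices of a cone.} Let $T$ be any semigroup with $G(T)=G$ and $T\subseteq\Con(S)$, and let $\widehat T:=\Con(T)\cap G$ be its saturation; note $\Con(T)$ inherits strict convexity and $\Con(T)\cap\partial M=\{\mathbf 0\}$ from $\Con(S)$. The lattice $\Gamma:=G\cap\partial M$ has rank $q$, and $G\cap\pi^{-1}(km)$ is a nonempty coset of $\Gamma$ (nonempty since $km\in m\ZZ=\pi(G)$); hence $\#[\widehat T]_{km}$ is the number of points of this coset lying in the dilate $k\cdot(\Con(T)\cap\pi^{-1}(m))$ of the $q$-dimensional body $\Con(T)\cap\pi^{-1}(m)$. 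The standard Ehrhart estimate — fundamental domains of $\Gamma$ lying entirely inside contribute the main term, those meeting the boundary contribute $O(k^{q-1})$ — gives
$$\#[\widehat T]_{km}\;=\;\frac{k^q}{\mathrm{covol}(\Gamma)}\,\mathrm{vol}\!\left(\Con(T)\cap\pi^{-1}(m)\right)+O(k^{q-1}),$$
with $\mathrm{vol}$ the Euclidean $q$-volume in $\pi^{-1}(m)\cap L$. Since $\Gamma\subseteq\partial M_\ZZ$ has index $\ind(S,\partial M)$ we get $\mathrm{covol}(\Gamma)=\ind(S,\partial M)\cdot\mathrm{covol}(\partial M_\ZZ)$, and by definition of the integral volume $\Vol_q=\mathrm{vol}/\mathrm{covol}(\partial M_\ZZ)$, so
$$\lim_{k\to\infty}\frac{\#[\widehat T]_{km}}{k^q}\;=\;\frac{\Vol_q\!\left(\Con(T)\cap\pi^{-1}(m)\right)}{\ind(S,\partial M)};$$
the same limit holds with $km$ replaced by $km-\pi(a)$ for any fixed $a\in T$, since $\pi(a)\in m\ZZ_{\geq 0}$ and $(k-\pi(a)/m)^q/k^q\to 1$. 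Applied to $T=S$, the inclusion $[S]_{km}\subseteq[\widehat S]_{km}$ yields the upper bound $\limsup_{k}\#[S]_{km}/k^q\leq\Vol_q(\Delta)/\ind(S,\partial M)$.

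\emph{Step 2: lower bound by inner approximation.} By definition $\Con(S)=\overline{\bigcup_{F}\Con(F)}$, the union over finite $F\subseteq S$. Given $\varepsilon>0$, choose a finite $F\subseteq S$ with (i) $G(\langle F\rangle)=G$ — achievable by adjoining to $F$ finitely many elements of $S$ whose $\ZZ$-span contains a basis of $G$ — and (ii) $\Vol_q(\Con(F)\cap\pi^{-1}(m))\geq(1-\varepsilon)\Vol_q(\Delta)$, which is possible because the cone slices $\Con(F)\cap\pi^{-1}(m)$ exhaust the compact convex body $\Delta$ from inside. Put $S':=\langle F\rangle$, an affine semigroup with $G(S')=G$ (hence spanning $L$), so its saturation is $\widehat{S'}=\Con(F)\cap G$. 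By Hochster's theorem the normalization of the affine domain $\kk[S']$ is the semigroup ring $\kk[\widehat{S'}]$, and it is a finite $\kk[S']$-module; equivalently, there is $a'\in S'$ with $a'+\widehat{S'}\subseteq S'$. Then $a'+[\widehat{S'}]_{km-\pi(a')}\subseteq[S']_{km}\subseteq[\widehat{S'}]_{km}$, so Step 1 applied to $T=S'$ squeezes $\#[S']_{km}/k^q$ to $\Vol_q(\Con(F)\cap\pi^{-1}(m))/\ind(S,\partial M)\geq(1-\varepsilon)\Vol_q(\Delta)/\ind(S,\partial M)$. As $S'\subseteq S$ we get $\liminf_{k}\#[S]_{km}/k^q\geq(1-\varepsilon)\Vol_q(\Delta)/\ind(S,\partial M)$ for every $\varepsilon>0$; letting $\varepsilon\to 0$ and combining with the upper bound of Step 1 proves the theorem.

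The one genuinely non-formal ingredient is Hochster's theorem identifying the normalization of an affine semigroup ring together with its module-finiteness — equivalently, the existence of the absorbing element $a'$. Everything else is bookkeeping: the Ehrhart estimate, the fact that all indices and volumes refer to the same ambient lattice once $G(S')=G(S)$ is arranged, and the disposal of the degenerate cases. I expect the fiddliest point is verifying property (ii): approximating $\Delta$ from inside by slices of finitely generated subcones while simultaneously keeping $G(\langle F\rangle)=G$. One could instead bypass the inner approximation and apply the general ``Khovanskii theorem on semigroups'' directly to $S$, producing an absorbing element of $S$ for $\widehat S=\Con(S)\cap G$ even when $\Con(S)$ is not polyhedral — the route of \cite{KAVEH_KHOVANSKII} — but that requires the harder round-cone form of the absorption statement.
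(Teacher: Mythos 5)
The paper does not prove this statement at all: it is imported verbatim as \cite[Corollary 1.16]{KAVEH_KHOVANSKII} and used as a black box, so there is no internal proof to compare against. Your argument is, as far as I can check, a correct self-contained reconstruction, and it is morally the same route Kaveh--Khovanskii take (approximation of $S$ by finitely generated subsemigroups, plus an ``absorbing element'' for the saturation in the finitely generated case), but you substitute commutative algebra for their combinatorial ``theorem on semigroups'': the existence of $a'$ with $a'+\widehat{S'}\subseteq S'$ is extracted from module-finiteness of the normalization of the affine semigroup ring $\kk[S']$. That deduction is fine, though the single absorbing element deserves one more line (write each module generator $b_i$ of $\widehat{S'}$ over $S'$ as $u_i-v_i$ with $u_i,v_i\in S'$ and take $a'=\sum_i v_i$), and the attribution is slightly off --- what you need is the elementary identification of the normalization of $\kk[S']$ with $\kk[\Con(S')\cap G(S')]$ together with Noether's finiteness of integral closure; Hochster's theorem proper is the Cohen--Macaulayness statement. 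The other two load-bearing points are sound: the lattice-point count in dilates of a general (not necessarily polyhedral) compact convex body with error $O(k^{q-1})$ is standard for convex bodies, and the inner approximation $\Vol_q(\Con(F)\cap\pi^{-1}(m))\to\Vol_q(\Delta)$ works because $\bigcup_F\Con(F)$ contains the relative interior of its closure $\Con(S)$, whose slice at level $m$ has full measure in $\Delta$; compactness of $\Delta$ from $\Con(S)\cap\partial M=\{\mathbf 0\}$ is also correctly used. What your approach buys is a proof needing only Ehrhart-type counting and finiteness of normalization, at the cost of rederiving what the paper deliberately outsources.
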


\begin{remark}
	Whenever the rational half-space $M$ is implicit from the context, we  write $\Delta(S)$ instead of $\Delta(S,M)$.
\end{remark}

%
%
%

\section{Mixed Multiplicities of graded families of monomial ideals}\label{sec_Mixed_Mult}

	Throughout the present section  we use the data below.
	
	\begin{setup}
		\label{setup_mixed_mult}
		Let $\kk$ be a field, $R$ be the standard graded polynomial ring $R = \kk[x_1,\ldots,x_d]$, and $\mm \subset R$ be the graded irrelevant ideal $\mm=\left(x_1,\ldots,x_d\right)$.
		 Following the notation in \autoref{sub_NOB}, we fix the linear map $\pi : \RR^{d+1} \rightarrow \RR$ given by the projection $(\alpha_1,\ldots,\alpha_d,\alpha_{d+1}) \in \RR^{d+1} \mapsto \alpha_{d+1} \in R$.
		Let $\pi_1 : \RR^{d+1} \rightarrow \RR^{d}$ be the projection given by $(\alpha_1,\ldots,\alpha_d,\alpha_{d+1}) \in \RR^{d+1} \mapsto (\alpha_1,\ldots,\alpha_d) \in \RR^d$.
		Let $M$ be the rational half-space $M = \pi^{-1}(\RR_{\gs 0})=\RR^d\times \RR_{\gs 0}$.			
		
	For a semigroup $S\subset \NN^{d+1}$ and $m \in \NN$, we denote by $[S]_{m}$ the level set 
$$
[S]_{m} = S \cap \pi^{-1}(m) = S \cap \left(\NN^{d} \times \{m\}\right).
$$
 
	\end{setup}

\subsection{Mixed multiplicities of $\mm$-primary graded families of monomial ideals}

In this subsection, we prove the existence of mixed multiplicities of graded families of monomial $\mm$-primary ideals in a polynomial ring. 
This extends the main result from \cite{cutkosky2019} in the setting of monomial ideals.
Here our proof depends directly on the Minkowski's Theorem (\autoref{thm_Minkowski_formula}).

We begin by introducing the following  setup.

\begin{setup}\label{setup_mixed_mult_primary}
	Adopt \autoref{setup_mixed_mult}.
	Let $\JJ(1)=\{J(1)_n\}_{n\in \NN}$, $\ldots$, $\JJ(r)=\{J(r)_n\}_{n\in \NN}$ be (not necessarily Noetherian) graded families of $\mm$-primary monomial ideals in $R$.
	Let $c \in \NN$ be a positive integer such that 
	\begin{equation}
	\label{eq_inclus_first_power}
	J(i)_1 \supset \mm^{c-1}  \quad \text{ for all } \quad 1 \le i \le r.
	\end{equation}
	and $c \ge 2$. 
	Thus, it follows that 
	\begin{equation}
	\label{eq_incl_prim_filt}
	J(i)_n \supset \mm^{cn} \; \text{ for all }  \;1 \le i \le r \; \text{ and } \; n \in \NN.
	\end{equation}
\end{setup}

 For a vector $\bn=(n_1,\ldots, n_r)$ in $\NN^{r}$, we  abbreviate $\bJ_\bn=J(1)_{n_1}\cdots J(r)_{n_r}$. 
 We identify each monomial $\bx^\bm = x_1^{m_1}\cdots x_d^{m_d} \in R$ with the corresponding vector $\bm = (m_1,\ldots,m_d) \in \NN^d$.
We now  connect our setting with the information in \autoref{sub_NOB}.  
Let $\bn = (n_1,\ldots,n_r) \in \NN^r$ be an $r$-tuple of non-negative integers.
Thus, for each $m \ge 1$, \autoref{eq_incl_prim_filt} yields the following equation
\begin{equation}
	\label{eq_diff_length_prim}
	\dim_{\kk}\left(R/\bJ_{m\bn}\right) \,=\, \dim_{\kk}\left(R /\mm^{c m\lvert \bn\rvert + 1}\right) - \dim_{\kk}\left(\bJ_{m\bn} /\mm^{c m \lvert \bn \rvert +1} \right). 
\end{equation}
We define the following set
$$
\Gamma_{\bn} :=\Big\{(\bm,m)=(m_1,\ldots, m_d,m)\in \NN^{d+1}\mid \bx^\bm \in \bJ_{m \bn} \mbox{ and } |\bm| \le c m \lvert \bn \rvert  \Big\}.
 $$

 The next lemma provides some basic properties of $\Gamma_{\bn}$. 

\begin{lemma}
	\label{lem_props_Gamma}
	The following statements hold:
	\begin{enumerate}[\rm (i)]
		\item $\Gamma_{\bn}$ is a subsemigroup of the semigroup $\NN^{d+1}$.
		\item $G(\Gamma_{\bn}) = \ZZ^{d+1}$, and so $L(\Gamma_{\bn}) = \RR^{d+1}$.
		\item $(\Gamma_{\bn}, M)$ is a strongly admissible pair, $\dim(\partial M) =d$ and $\ind(\Gamma_{\bn}, M)=\ind(\Gamma_{\bn}, \partial M)=1$.
		\item For any $n \in \NN$ and $1 \le i \le r$, we have 
		$$
		\Delta\left(\Gamma_{n \ee_i}\right) = \left(n \pi_1\left(\Delta\left(\Gamma_{\ee_i}\right)\right), 1\right).
		$$
	\end{enumerate}	
\end{lemma}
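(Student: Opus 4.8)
The plan is to verify the four items essentially in order, since each one feeds into the next. For (i), I would check directly from the definition that $\Gamma_{\bn}$ is closed under addition: if $(\bm,m)$ and $(\bm',m')$ lie in $\Gamma_{\bn}$, then $\bx^{\bm}\in \bJ_{m\bn}$ and $\bx^{\bm'}\in \bJ_{m'\bn}$, so $\bx^{\bm+\bm'}\in \bJ_{m\bn}\bJ_{m'\bn}\subseteq \bJ_{(m+m')\bn}$ using that each $\JJ(i)$ is a graded family ($J(i)_aJ(i)_b\subseteq J(i)_{a+b}$); and the degree bound adds, $|\bm+\bm'|\le cm|\bn|+cm'|\bn| = c(m+m')|\bn|$. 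Since $(\mathbf 0,0)\in\Gamma_{\bn}$ trivially, this gives a subsemigroup of $\NN^{d+1}$.

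For (ii), the point is that $\Gamma_{\bn}$ already generates all of $\ZZ^{d+1}$ as a group. Here I would use \eqref{eq_incl_prim_filt}: since $J(i)_n\supseteq \mm^{cn}$, for $m\ge 1$ every monomial $\bx^{\bm}$ with $|\bm| = cm|\bn|$ (in particular each $x_j^{cm|\bn|}$) lies in $\bJ_{m\bn}$, and these together with, say, $(\mathbf 0,1)$-type elements obtained from $\bx^{\mathbf 0}=1\in \mm^0$ at level $m$ with small $m$ — more precisely the monomials $x_j^{cm|\bn|}$ at level $m$ and the pure power $x_1^{c(m+1)|\bn|}$ at level $m+1$ — have differences spanning $\ZZ^{d+1}$. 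Concretely, taking differences of elements at consecutive levels isolates each coordinate direction $\ee_j$ and the level direction. This yields $G(\Gamma_{\bn})=\ZZ^{d+1}$ and hence $L(\Gamma_{\bn})=\RR^{d+1}$.

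Item (iii) is where I expect the only real work. Admissibility $\Gamma_{\bn}\subset M$ is immediate since $m\ge 0$. For strong admissibility I must show $\Con(\Gamma_{\bn})$ is strictly convex and meets $\partial M = \RR^d\times\{0\}$ only at the origin. The containment \eqref{eq_incl_prim_filt} again gives the lower bound: $\Con(\Gamma_{\bn})$ contains the cone over the simplex $\{|\bm|\le c|\bn|\}\times\{1\}$, which has nonempty interior, so $\Con(\Gamma_{\bn})$ is $(d+1)$-dimensional. The degree constraint $|\bm|\le cm|\bn|$ forces $\Con(\Gamma_{\bn})\subseteq \{(\bm,m): \mathbf m\ge \mathbf 0,\ m\ge 0,\ |\bm|\le cm|\bn|\}$, and the only point of this cone with $m=0$ is the origin — giving the intersection-with-$\partial M$ condition — while strict convexity follows because this bounding cone is pointed (it lies in the half-spaces $m\ge 0$ and $|\bm|\le cm|\bn|$, whose only common boundary point is $0$). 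Then $\dim(\partial M)=d$ is by definition of $M$, and $\ind(\Gamma_{\bn},M)=[\ZZ:\pi(G(\Gamma_{\bn}))]=[\ZZ:\ZZ]=1$ by (ii), and likewise $\ind(\Gamma_{\bn},\partial M)=[\partial M_\ZZ : G(\Gamma_{\bn})\cap\partial M]$: since $G(\Gamma_{\bn})=\ZZ^{d+1}$ we get $G(\Gamma_{\bn})\cap\partial M = \ZZ^d\times\{0\}=\partial M_\ZZ$, so this index is also $1$.

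For (iv) I would use the homogeneity of the construction in the single-family case $\bn=\ee_i$. By definition $\Delta(\Gamma_{\bn}) = \Con(\Gamma_{\bn})\cap\pi^{-1}(m)$ with $m=\ind(\Gamma_{\bn},M)=1$ by (iii), so $\Delta(\Gamma_{\ee_i})$ is the slice of $\Con(\Gamma_{\ee_i})$ at level $1$, and $\pi_1(\Delta(\Gamma_{\ee_i}))$ is its image in $\RR^d$. The claim is that $\Con(\Gamma_{n\ee_i})$ is obtained from $\Con(\Gamma_{\ee_i})$ by scaling the $\RR^d$-coordinates by $n$ while fixing the level coordinate. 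This comes from the identity $\bx^{\bm}\in J(i)_{mn}$ iff $(\bm,mn)$ contributes to $\Gamma_{\ee_i}$'s defining data — more carefully, I would show $(\bm,m)\in\Con(\Gamma_{n\ee_i})$ iff $(\bm/n, m)\in\Con(\Gamma_{\ee_i})$ by comparing the generating monomials: $\bx^{\bm}\in J(i)_{mn}$ with $|\bm|\le cmn$ versus $\bx^{\bm}\in J(i)_{m}$ with $|\bm|\le cm$, after rescaling. Passing to cones and slicing at level $1$ then yields $\Delta(\Gamma_{n\ee_i}) = (n\,\pi_1(\Delta(\Gamma_{\ee_i})), 1)$ as stated. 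The subtlety to handle carefully is that $\Gamma_{n\ee_i}$ is not literally $n$ times $\Gamma_{\ee_i}$ as a semigroup — the families $J(i)_{mn}$ need not equal $J(i)_m^n$ — but at the level of \emph{convex hulls / cones} the extra elements of $J(i)_{mn}$ sit inside the scaled cone because of the graded-family inclusions, so no new extreme rays appear; this is the point I would be most careful to justify.
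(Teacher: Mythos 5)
Your items (i), (iii) and (iv) are essentially the paper's argument: (i) is verbatim the closure computation; (iii) supplies the pointedness of the bounding cone $\{(\bm,m): \bm\ge \mathbf{0},\, m\ge 0,\, |\bm|\le cm|\bn|\}$ that the paper leaves implicit, and the index computations do follow from (ii); and (iv) is the paper's reindexing $\pi_1\big(\big[\Gamma_{n\ee_i}\big]_m\big)=\pi_1\big(\big[\Gamma_{\ee_i}\big]_{nm}\big)$ in different clothing --- your map scaling the $\RR^d$-coordinates by $n$ at fixed level agrees on cones with compressing the level coordinate by $n$, and the subtlety you flag is handled exactly as you say (the inclusion $J(i)_m^n\subseteq J(i)_{mn}$ gives one containment, the tautological identification of level sets the other).

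The genuine gap is in (ii). Every element of $\Gamma_{\bn}$ you actually exhibit has the form $(\bm,m)$ with $|\bm|=cm|\bn|$: the pure powers $x_j^{cm|\bn|}$ and, more generally, the monomials of degree exactly $cm|\bn|$, which is all that the inclusion $\bJ_{m\bn}\supseteq \mm^{cm|\bn|}$ combined with the cap $|\bm|\le cm|\bn|$ guarantees. All such lattice points satisfy the linear relation $|\bm|-c|\bn|\,m=0$, so the group they generate is contained in the rank-$d$ sublattice $\{(\bv,a)\in\ZZ^{d+1}: |\bv|=c|\bn|\,a\}$ and can never equal $\ZZ^{d+1}$; in particular no differences of such elements, at consecutive levels or otherwise, isolate a single $\ee_j$. (Your fallback of a ``$(\mathbf{0},1)$-type element coming from $1\in\mm^0$'' is also unavailable: $\bJ_{m\bn}$ is a proper ideal for $m\ge 1$, so $(\mathbf{0},m)\notin\Gamma_{\bn}$.) What is needed, and what the paper uses, is the stronger hypothesis \autoref{eq_inclus_first_power} of \autoref{setup_mixed_mult_primary}, namely $J(i)_1\supseteq\mm^{c-1}$ with $c\ge 2$: this produces a monomial $\bx^{\bm}\in\bJ_{\bn}$ with $|\bm|=c|\bn|-1$, strictly below the cap, so that $(\bm,1)$ and $(\bm+\ee_j,1)$ both lie in $\Gamma_{\bn}$ for every $1\le j\le d$. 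Their differences give $\ee_1,\ldots,\ee_d\in G(\Gamma_{\bn})$, and then $\ee_{d+1}=(\bm,1)-m_1\ee_1-\cdots-m_d\ee_d$ completes the proof of (ii); without an element off the hyperplane $|\bm|=c|\bn|m$ the statement fails, and with it the index claims in (iii).
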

\begin{proof}
	(i) Suppose that $(\bm,m),\, (\bm',m') \in \Gamma_{\bn}$, that is, $\bx^\bm \in \bJ_{m\bn}$, $\bx^{\bm'} \in \bJ_{m'\bn}$, $\bm \le cm|\bn|$ and $\bm' \le cm'|\bn|$.
	As $\JJ(1),\ldots,\JJ(r)$ are graded families of ideals, it follows that $\bx^{\bm+\bm'} \in \bJ_{(m+m')\bn}$.
	Thus, the inequality 
	$
	|\bm + \bm'| = |\bm| + |\bm'| \le cm|\bn| + cm'|\bn| = c(m+m')|\bn|
	$ yields the result.

	(ii) By \autoref{eq_inclus_first_power} we can choose $\bm = (m_1,\ldots,m_d) \in \NN^d$ such that $\bx^\bm \in \bJ_\bn$ and $|\bm| = c|\bn| - 1$.
	Since $\bx^{\bm + \ee_i} \in \bJ_\bn$ and $|\bm + \ee_i| = c|\bn|$ for all $1 \le i \le d$, it follows that $\{\ee_1,\ldots,\ee_d\}  \in G(\Gamma_{\bn})$ (here $\ee_i$ denotes the $i$-th elementary basis vector in $\NN^{d+1}$).	
	The equation 
	$$
	\ee_{d+1} = (\bm, 1) - m_1\ee_1 - \cdots - m_d\ee_d
	$$
	implies that $\ee_{d+1} \in G(\Gamma_{\bn})$, and so the result follows.
	
	(iii) The fact that $(\Gamma_{\bn}, M)$ is strongly admissible follows from the way that $\Gamma_{\bn}$ was defined. 
	The other claims are obtained directly from part (ii).
	
	(iv) By definition, for all $m \ge 0$ we have
	$
	\pi_1\left(\big[\Gamma_{n \ee_i}\big]_m\right) = \pi_1\left(\big[\Gamma_{\ee_i}\big]_{nm}\right).
	$	 
	Hence, one obtains 
	$$
	\pi_1\left(\Delta(\Gamma_{n \ee_i})\right) = \pi_1\left(\Con(\Gamma_{n \ee_i}) \cap \pi^{-1}(1)\right) = \pi_1\left(\Con(\Gamma_{\ee_i}) \cap \pi^{-1}(n)\right) = n\pi_1\left(\Delta\left(\Gamma_{\ee_i}\right)\right),
	$$
	and so the result follows.
\end{proof}

Following \cite[\S 1.6]{KAVEH_KHOVANSKII}, we define the {\it levelwise addition } of two subsets $A, B\subseteq \NN^{d+1}=\NN^d\times \NN$ as the set $A\oplus_t B\subseteq \NN^{d+1}$ such that $$\pi_1\left((A\oplus_t B)\cap\pi^{-1}(m)\right)=\pi_1\left(A\cap\pi^{-1}(m)\right)+\pi_1\left(B\cap\pi^{-1}(m)\right),$$
for every $m\in \NN$.  
The following proposition decomposes $\Gamma_{\bn}$ as a levelwise sum of simpler semigroups.
This basic result can be seen as the main step in our proof.

\begin{proposition}
	\label{prop_decomp_Gamma}
	Assume \autoref{setup_mixed_mult_primary}.
	We have the equality 
	$
	\Gamma_{\bn} \,=\,  \Gamma_{n_1\ee_1} \oplus_t \cdots \oplus_t  	\Gamma_{n_r\ee_r}.
	$
\end{proposition}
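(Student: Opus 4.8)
The plan is to reduce the asserted equality of subsets of $\NN^{d+1}$ to a family of Minkowski-sum identities in $\NN^d$, one for each level, and then to verify those identities by elementary monomial bookkeeping. Since the projection $\pi_1$ restricted to a level $\pi^{-1}(m) = \NN^d\times\{m\}$ is injective, two subsets of $\NN^{d+1}$ coincide if and only if their $\pi_1$-images on every level $\pi^{-1}(m)$ coincide; hence, by the very definition of levelwise addition recalled above, the proposition is equivalent to the identity
$$
\pi_1\!\left([\Gamma_{\bn}]_m\right) \;=\; \pi_1\!\left([\Gamma_{n_1\ee_1}]_m\right) + \cdots + \pi_1\!\left([\Gamma_{n_r\ee_r}]_m\right) \qquad\text{for every } m\in\NN,
$$
where $+$ denotes Minkowski (elementwise) sum of subsets of $\NN^d$. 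Unwinding the definitions, since $J(j)_0 = R$ we have $\bJ_{m\cdot n_i\ee_i} = J(i)_{mn_i}$, so that $\pi_1([\Gamma_{n_i\ee_i}]_m) = \{\bm\in\NN^d : \bx^\bm\in J(i)_{mn_i},\ |\bm|\le cmn_i\}$, while $\pi_1([\Gamma_{\bn}]_m) = \{\bm\in\NN^d : \bx^\bm\in\bJ_{m\bn},\ |\bm|\le cm|\bn|\}$.

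The inclusion $\supseteq$ is immediate: if $\bm = \bm_1+\cdots+\bm_r$ with $\bx^{\bm_i}\in J(i)_{mn_i}$ and $|\bm_i|\le cmn_i$, then $\bx^\bm = \bx^{\bm_1}\cdots\bx^{\bm_r}\in J(1)_{mn_1}\cdots J(r)_{mn_r} = \bJ_{m\bn}$ and $|\bm|=\sum_i|\bm_i|\le cm|\bn|$. For $\subseteq$, fix $\bm$ with $\bx^\bm\in\bJ_{m\bn}$ and $|\bm|\le cm|\bn|$. As $\bJ_{m\bn}$ is generated by the monomials $\bx^{\bm_1}\cdots\bx^{\bm_r}$ with $\bx^{\bm_i}\in J(i)_{mn_i}$, there is such a product dividing $\bx^\bm$; moreover, whenever $|\bm_i|>cmn_i$ we may replace $\bx^{\bm_i}$ by one of its divisors of degree exactly $cmn_i$, which still divides $\bx^\bm$ and, since $\mm^{cmn_i}\subseteq J(i)_{mn_i}$ by \autoref{eq_incl_prim_filt}, still lies in $J(i)_{mn_i}$. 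Thus we may assume $|\bm_i|\le cmn_i$ for all $i$. Put $\delta := \bm-(\bm_1+\cdots+\bm_r)\in\NN^d$ and $b_i := cmn_i-|\bm_i|\ge 0$; then $|\delta| = |\bm|-\sum_i|\bm_i|\le\sum_i b_i$, so $\delta$ can be written as $\delta=\epsilon_1+\cdots+\epsilon_r$ with $\epsilon_i\in\NN^d$ and $|\epsilon_i|\le b_i$ (regard $\delta$ as a multiset of $|\delta|$ standard basis vectors and distribute them into buckets $1,\dots,r$, bucket $i$ receiving at most $b_i$ of them). Setting $\bm_i':=\bm_i+\epsilon_i$ gives $\sum_i\bm_i'=\bm$, $|\bm_i'|=|\bm_i|+|\epsilon_i|\le cmn_i$, and $\bx^{\bm_i'}=\bx^{\epsilon_i}\bx^{\bm_i}\in J(i)_{mn_i}$, so $\bm$ lies in the Minkowski sum, as required.

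The only genuinely substantive ingredient is the degree bound $|\bm_i|\le cmn_i$ on the factors of a monomial of $\bJ_{m\bn}$, which is exactly where the hypothesis $\mm^{cn}\subseteq J(i)_n$ (\autoref{eq_incl_prim_filt}, equivalently \autoref{eq_inclus_first_power}) is used — without it there would be no control on the slice $|\bm|\le cm|\bn|$ and the decomposition could fail. The redistribution of $\delta$ is purely combinatorial, so I do not anticipate any further obstacle; the one point requiring a little care is the standard translation "$\bx^\bm$ lies in a product of monomial ideals" $\Longleftrightarrow$ "$\bx^\bm$ is divisible by a product of monomials, one taken from each factor ideal."
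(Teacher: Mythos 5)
Your proof is correct, but it takes a different route from the paper's. The paper disposes of this proposition in one line by invoking \autoref{prop_decomp_Gamma_non_mm}(ii) (with the $\mm$-primary family playing no role, i.e.\ $I_n=R$), and that general lemma establishes the inclusion ``$\subset$'' by first producing an \emph{exact} decomposition $\pi_1(w)=\bm_1+\cdots+\bm_r$ with $\bx^{\bm_i}\in J(i)_{mn_i}$ and then running an induction on the total degree excess $l(\bm_1,\ldots,\bm_r)$, swapping one unit vector at a time from an oversize factor to an undersize one; the only input there is the generation bound $\beta(J(i)_n)\le cn$, which is why the same argument covers the non-primary families of \autoref{setup_mixed_mult_general}. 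You instead start from a product of monomials merely \emph{dividing} $\bx^\bm$, truncate each oversize factor down to degree exactly $cmn_i$ using the primary inclusion $\mm^{cmn_i}\subseteq J(i)_{mn_i}$ of \autoref{eq_incl_prim_filt}, and then redistribute the leftover exponent vector $\delta$ greedily among the factors using $|\delta|\le\sum_i b_i$. All steps check out (including the boundary cases $n_i=0$ and $m=0$), and your version is arguably more transparent and avoids the induction; the trade-off is that the truncation step genuinely needs the $\mm$-primary hypothesis, so unlike the paper's swapping argument it would not extend to \autoref{prop_decomp_Gamma_non_mm} itself, where the ideals $J(i)_n$ need not contain a power of $\mm$ and only the bound $\beta(J(i)_n)\le cn$ is available.
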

\begin{proof}
	The result is obtained from \autoref{prop_decomp_Gamma_non_mm}(ii) and the fact that $\beta(J(i)_n) \le cn$ for all $1 \le i \le r$ and $n \in \NN$ (see the assumptions and notations  in \autoref{setup_mixed_mult_general}).
\end{proof}

From \autoref{eq_diff_length_prim}, and the fact that $\JJ(1),\ldots,\JJ(r)$ are graded families of monomial ideals, we obtain that 
\begin{align}
	\label{eq_len_equal_num_points}
	\begin{split}
	\dim_{\kk}\left(R/\bJ_{m\bn}\right) &= \dim_{\kk}\left(R /\mm^{c m\lvert \bn \rvert+1}\right) - \dim_{\kk}\left(\bJ_{m\bn} /\mm^{c m \lvert \bn \rvert+1} \right).\\
	&= \binom{cm|\bn|+d}{d} \;-\; \#\big[\Gamma_{\bn}\big]_m.
	\end{split}
\end{align}

After the previous preparatory results, we are ready for the main result of this subsection. 
The following theorem shows the existence of a homogeneous  polynomial that can be used to define the mixed multiplicities of the graded families $\JJ(1), \ldots, \JJ(r)$.
 As a consequence of the proof, we describe the coefficients of the polynomial explicitly in terms of the mixed volumes of certain Newton-Okounkov bodies.

\begin{theorem}
	\label{thm_m_primary_case}
	Assume \autoref{setup_mixed_mult_primary}.
	The function 
	$$
	F(n_1,\ldots,n_r) = \lim_{m\to \infty} \frac{\dim_\kk\big(R/J(1)_{mn_1}\cdots J(r)_{mn_r})\big)}{m^d}
	$$
	is equal to a homogeneous polynomial $G(\bn)=G(n_1,\ldots,n_r)$ of total degree $d$ with real coefficients for all $\bn = (n_1,\ldots,n_r) \in \NN^r$.
	Explicitly, the polynomial $G(\bn)$ is given by 
	$$
	G(\bn) = \sum_{|\dd| = d} \frac{1}{\dd!} \left( c^d - \MV_d\left({\Delta(\Gamma)}_{\dd}\right) \right)\, \bn^\dd,
	$$
	where $\Delta(\Gamma)$ denotes the sequence of Newton-Okounkov bodies $\Delta(\Gamma) = (\Delta(\Gamma_{\ee_1}),\ldots,\Delta(\Gamma_{\ee_r}))$.
\end{theorem}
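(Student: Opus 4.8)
The plan is to compute the limit defining $F(\bn)$ by translating everything into lattice-point counts and invoking the Kaveh--Khovanskii theorem together with Minkowski's theorem. First I would rewrite, using \autoref{eq_len_equal_num_points}, the quantity $\dim_\kk(R/\bJ_{m\bn})$ as $\binom{cm|\bn|+d}{d} - \#[\Gamma_{\bn}]_m$. Dividing by $m^d$ and letting $m\to\infty$, the first term contributes $\frac{c^d|\bn|^d}{d!}$. For the second term, \autoref{lem_props_Gamma}(iii) tells us that $(\Gamma_{\bn},M)$ is strongly admissible with $q=\dim(\partial M)=d$ and both relevant indices equal to $1$, so \autoref{thm_limit_KK} gives
$$
\lim_{m\to\infty}\frac{\#[\Gamma_{\bn}]_m}{m^d}=\Vol_d\big(\Delta(\Gamma_{\bn})\big).
$$
Hence $F(\bn)=\frac{c^d|\bn|^d}{d!}-\Vol_d(\Delta(\Gamma_{\bn}))$, and it remains to identify $\Vol_d(\Delta(\Gamma_{\bn}))$ as a polynomial in $\bn$.

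The key input here is \autoref{prop_decomp_Gamma}, which decomposes $\Gamma_{\bn}=\Gamma_{n_1\ee_1}\oplus_t\cdots\oplus_t\Gamma_{n_r\ee_r}$ as a levelwise sum. Since taking the convex cone and slicing at $\pi^{-1}(m)$ is compatible with levelwise addition (this is the content of the levelwise-addition formalism of \cite[\S 1.6]{KAVEH_KHOVANSKII}), the Newton--Okounkov body satisfies $\pi_1(\Delta(\Gamma_{\bn}))=\pi_1(\Delta(\Gamma_{n_1\ee_1}))+\cdots+\pi_1(\Delta(\Gamma_{n_r\ee_r}))$ as a Minkowski sum of convex bodies in $\RR^d$. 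Combining this with \autoref{lem_props_Gamma}(iv), which gives $\pi_1(\Delta(\Gamma_{n_i\ee_i}))=n_i\,\pi_1(\Delta(\Gamma_{\ee_i}))$, we get
$$
\pi_1\big(\Delta(\Gamma_{\bn})\big)=n_1\,\pi_1\big(\Delta(\Gamma_{\ee_1})\big)+\cdots+n_r\,\pi_1\big(\Delta(\Gamma_{\ee_r})\big).
$$
Because $M=\RR^d\times\RR_{\gs0}$ and the integral measure on $\partial M$ is the standard one, the integral volume $\Vol_d(\Delta(\Gamma_{\bn}))$ equals the ordinary $d$-dimensional volume of this Minkowski sum. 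Now Minkowski's theorem (\autoref{thm_Minkowski_formula}), in the form of \autoref{eq_volume_poly}, expands $\Vol_d\big(\sum_i n_i\pi_1(\Delta(\Gamma_{\ee_i}))\big)=\sum_{|\dd|=d}\frac{1}{\dd!}\MV_d({\Delta(\Gamma)}_{\dd})\,\bn^\dd$, where I write ${\Delta(\Gamma)}_{\dd}$ for the multiset $\bigcup_i\bigcup_{j=1}^{d_i}\{\pi_1(\Delta(\Gamma_{\ee_i}))\}$ matching the statement's notation. Similarly $c^d|\bn|^d=\big(\sum_i cn_i\big)^d$ expands multinomially as $\sum_{|\dd|=d}\frac{d!}{\dd!}c^d\,\bn^\dd$, i.e.\ $\sum_{|\dd|=d}\frac{1}{\dd!}c^d\binom{d}{\dd}\dd!\,\bn^\dd$; matching the form in the statement amounts to recognizing $c^d|\bn|^d/d!=\sum_{|\dd|=d}\frac{c^d}{\dd!}\bn^\dd$ since $\binom{d}{\dd}=d!/\dd!$. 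Subtracting the two expansions term by term yields exactly
$$
G(\bn)=\sum_{|\dd|=d}\frac{1}{\dd!}\big(c^d-\MV_d({\Delta(\Gamma)}_{\dd})\big)\,\bn^\dd,
$$
which establishes that $F$ agrees with this polynomial on $\NN^r$.

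The step I expect to be the main obstacle is justifying rigorously the interaction between the levelwise sum $\oplus_t$, the passage to the closed convex cone $\Con(-)$, and the slice at $\pi^{-1}(m)$: one needs that $\Con(\Gamma_{\bn})\cap\pi^{-1}(m)$ projects under $\pi_1$ to the Minkowski sum of the $\pi_1(\Con(\Gamma_{n_i\ee_i})\cap\pi^{-1}(m))$, and that this identification is compatible with taking limits of the rescaled lattice-point counts (so that the Kaveh--Khovanskii volume of the sum really is the Minkowski sum of the pieces). This is morally the ``multiadditivity of Newton--Okounkov bodies under levelwise addition'' recorded in \cite[\S1.6]{KAVEH_KHOVANSKII}, but care is needed because the families are not assumed Noetherian, so the semigroups $\Gamma_{n_i\ee_i}$ need not be finitely generated; one must check that strong admissibility (from \autoref{lem_props_Gamma}(iii)) is preserved and that the relevant closures commute with $\pi_1$, which is where the linearity of the bound $|\bm|\le cm|\bn|$ built into the definition of $\Gamma_{\bn}$ does the work. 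Everything else — the binomial asymptotics, the index computations, and the multinomial bookkeeping — is routine.
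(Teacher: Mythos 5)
Your proposal is correct and follows essentially the same route as the paper's proof: rewrite $\dim_\kk(R/\bJ_{m\bn})$ via \autoref{eq_len_equal_num_points}, apply \autoref{thm_limit_KK} with \autoref{lem_props_Gamma}(iii), then use \autoref{prop_decomp_Gamma}, \autoref{lem_props_Gamma}(iv) and \cite[Proposition 1.32]{KAVEH_KHOVANSKII} to express $\pi_1(\Delta(\Gamma_{\bn}))$ as the Minkowski sum $\sum_i n_i\pi_1(\Delta(\Gamma_{\ee_i}))$, and finish with \autoref{eq_volume_poly} and the multinomial expansion of $c^d|\bn|^d/d!$. The "main obstacle" you flag (compatibility of $\oplus_t$ with cones, slices and volumes) is exactly what \cite[Proposition 1.32]{KAVEH_KHOVANSKII} supplies, which is the reference the paper invokes at that step.
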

\begin{proof}
	Let $\bn = (n_1,\ldots,n_r) \in \NN^r$.
	By using \autoref{thm_limit_KK}, \autoref{lem_props_Gamma}(iii) and \autoref{eq_len_equal_num_points} we obtain the equation 
	\begin{equation}
		\label{eq_F_as_vol}
		F(\bn) \,=\, \lim_{m\to \infty} \frac{\binom{cm|\bn|+d}{d}}{m^d} - \lim_{m\to \infty} \frac{\#\big[\Gamma_{\bn}\big]_m}{m^d} = \frac{c^d|\bn|^d}{d!} - \Vol_{d}\left(\Delta(\Gamma_\bn)\right).
	\end{equation}
	Due to \autoref{prop_decomp_Gamma}, \autoref{lem_props_Gamma}(iv) and \cite[Proposition 1.32]{KAVEH_KHOVANSKII} we get the equality 
	$$
	\pi_1\left(\Delta(\Gamma_{\bn})\right) = \pi_1\left(\Delta(\Gamma_{n_1\ee_1})\right) + \cdots +	\pi_1\left(\Delta(\Gamma_{n_r\ee_r})\right) = n_1\pi_1\left(\Delta(\Gamma_{\ee_1})\right) + \cdots +	n_r\pi_1\left(\Delta(\Gamma_{\ee_r})\right).
	$$
	Thus, \autoref{eq_volume_poly} implies that 
	\begin{align}
		\label{eq_Vol_gamma_n}
		\begin{split}		
		\Vol_{d}\left(\Delta(\Gamma_\bn)\right) &= \Vol_{d}\left(\pi_1(\Delta(\Gamma_\bn))\right)\\
		&= \sum_{\lvert \dd \rvert=d} \frac{1}{\dd!}\MV_d({\pi_1(\Delta(\Gamma))}_\dd)  \bn^\dd = \sum_{\lvert \dd \rvert=d} \frac{1}{\dd!}\MV_d({\Delta(\Gamma)}_\dd)  \bn^\dd
		\end{split}
	\end{align}
	where $\pi_1(\Delta(\Gamma))$ denotes the sequence  $\pi_1(\Delta(\Gamma)) = (\pi_1(\Delta(\Gamma_{\ee_1})),\ldots,\pi_1(\Delta(\Gamma_{\ee_r})))$ of convex bodies.
	Finally, the result follows by combining \autoref{eq_F_as_vol} and \autoref{eq_Vol_gamma_n}.
\end{proof}

\begin{proposition}
	\label{lem_non_neg_mm_prim}
	Assume \autoref{setup_mixed_mult_primary} and use the same notation of \autoref{thm_m_primary_case}. 
	Let $\dd = (d_1,\ldots,d_r) \in \NN^r$ with $\lvert \dd \rvert = d$. 
	Then, one has that $c^d - \MV_d({\Delta(\Gamma)}_{\dd}) \ge 0$.
\end{proposition}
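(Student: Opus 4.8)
The plan is to interpret $c^d - \MV_d({\Delta(\Gamma)}_{\dd})$ as a value of the function $F$ (or, rather, of the polynomial $G$) from \autoref{thm_m_primary_case}, and then to observe that this value is manifestly non-negative because it is a limit of non-negative quantities. Concretely, from the explicit formula
$$
G(\bn) = \sum_{|\dd'| = d} \frac{1}{\dd'!} \left( c^d - \MV_d\left({\Delta(\Gamma)}_{\dd'}\right) \right)\, \bn^{\dd'}
$$
one recovers the individual coefficient $c^d - \MV_d({\Delta(\Gamma)}_{\dd})$ by the usual device of evaluating $G$ at suitable integer points and taking finite differences: since $G$ is a homogeneous polynomial of degree $d$ in $r$ variables, the coefficient of $\bn^{\dd}$ is, up to the positive factor $1/\dd!$ absorbed appropriately, a $\ZZ$-linear combination of the values $G(\bn)$ for finitely many $\bn \in \NN^r$.

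First I would make the preceding sentence precise: for $\dd = (d_1,\ldots,d_r)$ with $|\dd| = d$, I would use the standard iterated-difference identity expressing $\dd!$ times the coefficient of $\bn^\dd$ in a degree-$d$ homogeneous polynomial $G$ as
$$
\dd!\cdot[\bn^\dd]\,G \;=\; \Big(\Delta_1^{d_1}\cdots\Delta_r^{d_r}\,G\Big)(\mathbf{0}),
$$
where $\Delta_i$ is the forward difference operator in the $i$-th variable; expanding the difference operators writes the right-hand side as $\sum_{\mathbf{0}\le \mathbf{j}\le \dd} (-1)^{|\dd|-|\mathbf{j}|}\binom{d_1}{j_1}\cdots\binom{d_r}{j_r} G(\mathbf{j})$. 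This already shows $c^d - \MV_d({\Delta(\Gamma)}_{\dd})$ is a rational number; but to get non-negativity I want to avoid the alternating signs. So instead I would argue directly: by \autoref{thm_m_primary_case}, for every $\bn\in\NN^r$,
$$
G(\bn) = F(\bn) = \lim_{m\to\infty}\frac{\dim_\kk\big(R/J(1)_{mn_1}\cdots J(r)_{mn_r}\big)}{m^d} \;\ge\; 0,
$$
since each $\dim_\kk(R/\bJ_{m\bn})$ is a non-negative integer. Thus $G$ is a homogeneous polynomial of degree $d$ that is non-negative on all of $\NN^r$.

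The remaining point is purely a fact about polynomials: a homogeneous polynomial $G \in \RR[n_1,\ldots,n_r]$ of degree $d$ that is non-negative on $\NN^r$ need not have non-negative coefficients in general, but the coefficient of a \emph{pure power} $\bn^\dd$ (i.e.\ one supported on a single variable, $\dd = d\,\ee_i$) is always non-negative — and more to the point, I should check whether \autoref{lem_non_neg_mm_prim} is really claiming non-negativity of \emph{all} coefficients or just enough for later use. Looking at the statement, it does claim $c^d - \MV_d({\Delta(\Gamma)}_\dd) \ge 0$ for every $\dd$ with $|\dd| = d$. To prove this for mixed $\dd$, I would exploit the extra structure: each coefficient equals $\frac{1}{\dd!}$ times $\MV_d$ of a multiset built from $d$ copies drawn among the bodies $\Delta(\Gamma_{\ee_1}),\ldots,\Delta(\Gamma_{\ee_r})$, compared against $c^d$ which is $d!$ times the volume of $c$ times the standard simplex whose own Newton–Okounkov body description is available; more usefully, from \autoref{eq_incl_prim_filt} we have $J(i)_n \supset \mm^{cn}$, hence $\pi_1(\Delta(\Gamma_{\ee_i}))$ is contained in $c\cdot(\text{the standard simplex }\sigma = \conv(\mathbf 0,\ee_1,\ldots,\ee_d))$ in $\RR^d$. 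Then monotonicity of mixed volumes under inclusion of convex bodies gives
$$
\MV_d\big({\Delta(\Gamma)}_\dd\big) = \MV_d\big({\pi_1(\Delta(\Gamma))}_\dd\big) \;\le\; \MV_d(\underbrace{c\sigma,\ldots,c\sigma}_{d}) = c^d\,\MV_d(\underbrace{\sigma,\ldots,\sigma}_{d}) = c^d\cdot d!\,\Vol_d(\sigma) = c^d,
$$
using that $\Vol_d(\sigma) = 1/d!$. That is exactly the desired inequality.

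I expect the main obstacle to be the bookkeeping around the two parallel descriptions of the coefficient — once via the limit $F(\bn)$ and once via mixed volumes of the $\pi_1(\Delta(\Gamma_{\ee_i}))$ — and making sure the inclusion $\pi_1(\Delta(\Gamma_{\ee_i})) \subseteq c\sigma$ is rigorously justified from $\Gamma_{\ee_i}$'s definition (it follows because every generator $(\bm,m)$ of $\Gamma_{\ee_i}$ satisfies $|\bm|\le cm$, so $\Con(\Gamma_{\ee_i})\cap\pi^{-1}(1)$ lies in $\{\,\bv\in\RR^d_{\ge0} : |\bv|\le c\,\} = c\sigma$). Once that inclusion is in hand, the monotonicity of mixed volumes — a standard fact, e.g.\ \cite[Chapter IV]{EWALD} — closes the argument immediately, and the normalization $\MV_d(\sigma,\ldots,\sigma) = d!\,\Vol_d(\sigma) = 1$ is just \autoref{eq_volume_poly} specialized to a single body. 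I would present the proof in this order: (1) recall $\pi_1(\Delta(\Gamma_{\ee_i})) \subseteq c\sigma$; (2) rewrite ${\Delta(\Gamma)}_\dd$ as ${\pi_1(\Delta(\Gamma))}_\dd$ via \autoref{lem_props_Gamma}; (3) apply monotonicity and multilinearity of mixed volumes; (4) evaluate $\MV_d(\sigma,\ldots,\sigma)$. The alternative limit-based argument via $G(\bn) = F(\bn)\ge 0$ is worth mentioning as a sanity check but by itself does not yield non-negativity of the mixed coefficients, so the inclusion-monotonicity route is the one I would actually write down.
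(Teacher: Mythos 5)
Your final argument is correct and is essentially the paper's own proof: the paper likewise observes that each $\Delta(\Gamma_{\ee_i})$ is contained in (the copy at height $1$ of) the dilated standard simplex $\conv(\mathbf 0, c\ee_1,\ldots,c\ee_d)$, whose self-mixed-volume is $d!\cdot c^d/d! = c^d$, and then invokes monotonicity of mixed volumes. The preliminary discussion of finite differences and of non-negativity of $G$ on $\NN^r$ is not needed, as you yourself conclude.
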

\begin{proof}
	Let $\Sigma \subset \RR^d$ be the polytope given as the convex hull of the points $\mathbf{0}, c \ee_1, \ldots, c \ee_d \subset \RR^d$.
	Consider the polytope $\Delta = \Sigma \times \{1\} \subset \RR^{d+1}$, and notice that by construction we have $\Delta \supset \Delta(\Gamma_{\ee_i})$ for all $1 \le i \le r$.
	Since $\Vol_{d}(\Delta) = c^d/d!$ and $\MV_{d}(\Delta,\ldots,\Delta) = d!\Vol_{d}(\Delta)$, the inequality 
	\begin{align*}
	  c^d - \MV_d({\Delta(\Gamma)}_{(d_1,\ldots,d_r)})  
	= \MV_{d}(\Delta,\ldots,\Delta) - \MV_d({\Delta(\Gamma)}_{(d_1,\ldots,d_r)})  \ge 0
	\end{align*}
	follows from the monotonicity of mixed volumes (see, e.g., \cite[Eq. 5.25]{SCHNEIDER}).
\end{proof}

With \autoref{thm_m_primary_case} in hand we are ready to define the mixed multiplicities of graded families of $\mm$-primary monomial ideals.
Due to \autoref{lem_non_neg_mm_prim}, the mixed multiplicities defined below are always non-negative.

\begin{definition}\label{def_mix_mult_prim}
	Assume \autoref{setup_mixed_mult_primary} and let $G(\bn)$ be as in \autoref{thm_m_primary_case}. 
	Write 
	$$
	G(\bn) = \sum_{|\dd| = d} \frac{1}{\dd!}\, e_{\dd}(\JJ(1),\ldots,\JJ(r))\, \bn^\dd.
	$$
	For each $\dd = (d_1,\ldots,d_r) \in \NN^r$ with $|\dd|=d$, we define the non-negative real number $$e_{\dd}(\JJ(1),\ldots,\JJ(r)) \ge 0$$ to be the {\it mixed multiplicities of type $\dd$ of  $\JJ(1) = \{J(1)_n\}_{n\in \NN}$,  $\ldots$, $\JJ(r) = \{J(r)_n\}_{n\in \NN}$}.	
\end{definition}

\subsection{Mixed multiplicities of arbitrary graded families of monomial ideals}
In this subsection, we introduce the notion of mixed multiplicities for arbitrary graded families of monomial ideals under mild conditions.
We  begin with the  following   setup that is used in our results. 

\begin{setup}
	\label{setup_mixed_mult_general}
	Adopt \autoref{setup_mixed_mult}.
	Let $\II = \{I_n\}_{n \in \NN}$ be a (not necessarily Noetherian) graded family of $\mm$-primary monomial ideals.
	Let $\JJ(1)=\{J(1)_n\}_{n\in \NN}$, $\ldots$, $\JJ(r)=\{J(r)_n\}_{n\in \NN}$ be (not necessarily Noetherian) graded families of monomial ideals in $R$.

	For a homogeneous ideal $J$ we denote $\beta(J)=\max\{j\mid [J\otimes_R \mathbb{k}]_j\neq 0\}$, that is, the maximum degree of a minimal set of homogeneous generators of $J$. We assume that there exists $\beta \in \NN$ satisfying 
	$$
 \beta(J(i)_n) \le \beta n	$$ 
	for all $1 \le i \le r$ and $n \in \NN$; similar assumptions have been considered in previous works regarding limits of graded families of ideals \cite[Theorem 6.1]{cutkosky2014}.
\end{setup}

We have the following simple observation that plays an important role in our approach.

\begin{lemma}
	\label{lem_eq_filt}
	There exists an integer $c > \beta$ such that 
	$$
	\mm^{c(n_0+|\bn|)} \, \cap \, \bJ_\bn \;=\; \mm^{c(n_0+|\bn|)} \, \cap \, I_{n_0}\bJ_\bn
	$$
	for all $n_0 \in \NN$ and $\bn = (n_1,\ldots,n_r) \in \NN^r$.
\end{lemma}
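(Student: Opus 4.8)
The plan is to reduce at once to a statement about monomials, and then to split an arbitrary monomial of $\bJ_\bn$ into a generator of linearly bounded degree times a high power of $\mm$ which can be absorbed into $I_{n_0}$.

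First I would observe that $I_{n_0}\bJ_\bn \subseteq \bJ_\bn$, so the inclusion ``$\supseteq$'' is automatic, and that all three ideals $\mm^{c(n_0+|\bn|)}$, $\bJ_\bn = J(1)_{n_1}\cdots J(r)_{n_r}$ and $I_{n_0}\bJ_\bn$ are monomial; hence it suffices to prove that every monomial $\bx^\bm$ with $|\bm| \ge c(n_0+|\bn|)$ and $\bx^\bm \in \bJ_\bn$ lies in $I_{n_0}\bJ_\bn$. The case $n_0 = 0$ is trivial because $I_0 = R$, so I may assume $n_0 \ge 1$.

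Next I would fix the constant. Since $I_1$ is $\mm$-primary there is an integer $c_0$ with $\mm^{c_0}\subseteq I_1$, and the graded family axiom gives $I_1^{n_0}\subseteq I_{n_0}$, hence $\mm^{c_0 n_0}\subseteq I_{n_0}$ for every $n_0$. Set $c := \max\{c_0,\,\beta+1\}$, so in particular $c > \beta$. Now take a monomial $\bx^\bm$ as above. Because $\bJ_\bn$ is generated by the products $g_1\cdots g_r$ of minimal monomial generators $g_i$ of $J(i)_{n_i}$, we may write $\bx^\bm = g_1\cdots g_r\cdot\bx^{\bm''}$ for one such product and a monomial $\bx^{\bm''}$; the hypothesis $\beta(J(i)_n)\le \beta n$ gives $\deg(g_1\cdots g_r) = \sum_{i=1}^r\deg(g_i)\le \beta|\bn|$, so
$$
|\bm''| \;=\; |\bm| - \deg(g_1\cdots g_r) \;\ge\; c(n_0+|\bn|) - \beta|\bn| \;=\; c\,n_0 + (c-\beta)|\bn| \;\ge\; c\,n_0 \;\ge\; c_0\,n_0 .
$$
Therefore $\bx^{\bm''}\in\mm^{c_0 n_0}\subseteq I_{n_0}$, and $\bx^\bm = (g_1\cdots g_r)\,\bx^{\bm''}\in \bJ_\bn\cdot I_{n_0} = I_{n_0}\bJ_\bn$, which finishes the argument.

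There is no genuinely hard step; the only point that uses the hypotheses is the need to balance the two linear rates involved — the bound $\beta n$ on the degrees of generators of $J(i)_n$, which controls how much degree is consumed by $g_1\cdots g_r$, against the rate $c_0$ at which powers of $\mm$ enter $I_1$ (and hence $\mm^{c_0 n_0}$ enters $I_{n_0}$ via $I_1^{n_0}\subseteq I_{n_0}$). Choosing $c$ to dominate both, i.e.\ $c \ge \max\{c_0,\beta+1\}$, guarantees the leftover exponent $|\bm''|$ is at least $c_0 n_0$, which is exactly what is needed to push $\bx^{\bm''}$ into $I_{n_0}$.
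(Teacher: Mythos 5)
Your proof is correct and follows essentially the same route as the paper's: both choose $c=\max\{\beta+1,c'\}$ with $\mm^{c'}\subseteq I_1$, and both use the bound $\beta(\bJ_\bn)\le\beta|\bn|$ to show that any monomial of $\bJ_\bn$ of degree at least $c(n_0+|\bn|)$ factors as a generator times a monomial in $\mm^{cn_0}\subseteq I_{n_0}$. Your write-up just makes the monomial decomposition explicit where the paper states the resulting inclusion of ideals directly.
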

\begin{proof}
	Let $c' \in \NN$ be a positive integer such that $I_1 \supset \mm^{c'}$; in particular, $I_{n_0} \supset \mm^{n_0c'}$.
	Let $c = \max\{\beta+1, c'\}$.
	Since $\beta(\bJ_\bn) \le \beta|\bn|$, we obtain the following inclusion
	$$
	\mm^{c(n_0+|\bn|)} \, \cap \, I_{n_0}\bJ_\bn \;\supset\; \mm^{c(n_0+|\bn|)} \, \cap \, \mm^{cn_0}\bJ_\bn \;=\; \mm^{c(n_0+|\bn|)} \, \cap \, \bJ_\bn,
	$$
 the result follows.
\end{proof}

Let $c$ be as in \autoref{lem_eq_filt}, $n_0 \in \NN$, and $\bn = (n_1,\ldots,n_r) \in \NN^r$.
We define the sets
\begin{equation}
	\label{eq_def_Gamma}
	\Gamma_{n_0,\bn} :=\Big\{(\bm,m)=(m_1,\ldots, m_d,m)\in \NN^{d+1}\mid \bx^\bm \in \bJ_{m \bn} \mbox{ and } |\bm| \le c m (n_0+\lvert \bn \rvert)  \Big\}
\end{equation}
and 
\begin{equation}
	\label{eq_def_Gamma_hat}
	\widehat{\Gamma}_{n_0,\bn} :=\Big\{(\bm,m)=(m_1,\ldots, m_d,m)\in \NN^{d+1}\mid \bx^\bm \in I_{mn_0}\bJ_{m \bn} \mbox{ and } |\bm| \le c m (n_0+\lvert \bn \rvert)  \Big\}.
\end{equation}

The lemma below is an equivalent to \autoref{lem_props_Gamma} and its proof follows verbatim.

\begin{lemma}
	\label{lem_props_Gamma_non_mm_prim}
	Let $S \subset \NN^{d+1}$ be equal to either $\Gamma_{n_0,\bn}$ or $\widehat{\Gamma}_{n_0,\bn}$.
	The following statements hold:
	\begin{enumerate}[\rm (i)]
		\item $S$ is a subsemigroup of the semigroup $\NN^{d+1}$.
		\item $G(S) = \ZZ^{d+1}$, and so $L(S) = \RR^{d+1}$.
		\item $(S, M)$ is a strongly admissible pair, $\dim(\partial M) =d$ and $\ind(S, M)=\ind(S, \partial M)=1$.
		\item For any $n \in \NN$ and $1 \le i \le r$, we have 
		$$
		\Delta\big(\Gamma_{0, n \ee_i}\big) = \left(n \pi_1\left(\Delta\big(\Gamma_{0,\ee_i}\big)\right), 1\right) \; \text{ and } \; \Delta\big(\widehat{\Gamma}_{0, n \ee_i}\big) = \big(n \pi_1\big(\Delta\big(\widehat{\Gamma}_{0,\ee_i}\big)\big), 1\big).
		$$
		\item For any $n \in \NN$, $\Delta\big(\Gamma_{n, \mathbf{0}}\big) = \left(n \pi_1\big(\Delta\big(\Gamma_{1,\mathbf{0}}\big)\big), 1\right)$  and $\Delta\big(\widehat{\Gamma}_{n, \mathbf{0}}\big) = \big(n \pi_1\big(\Delta\big(\widehat{\Gamma}_{1,\mathbf{0}}\big)\big), 1\big)$.
	\end{enumerate}	
\end{lemma}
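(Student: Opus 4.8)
The plan is to transcribe the proof of \autoref{lem_props_Gamma} essentially verbatim, tracking the only two changes built into the definitions of $\Gamma_{n_0,\bn}$ and $\widehat{\Gamma}_{n_0,\bn}$: membership is now tested in $\bJ_{m\bn}$ (for $\Gamma_{n_0,\bn}$) or in $I_{mn_0}\bJ_{m\bn}$ (for $\widehat{\Gamma}_{n_0,\bn}$), and the degree constraint reads $|\bm|\le cm(n_0+|\bn|)$ with $c$ as in \autoref{lem_eq_filt}. For part~(i), given $(\bm,m),(\bm',m')\in S$ I would use that $\II,\JJ(1),\ldots,\JJ(r)$ are graded families of ideals---in particular $I_{mn_0}I_{m'n_0}\subseteq I_{(m+m')n_0}$---to place $\bx^{\bm+\bm'}$ in the appropriate ideal, and then add the two degree inequalities to conclude $|\bm+\bm'|\le c(m+m')(n_0+|\bn|)$. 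For part~(ii), assuming $n_0+|\bn|\ge 1$, I would exhibit at level $m=1$ a monomial $\bx^\bm$ in the defining ideal with $|\bm|+1\le c(n_0+|\bn|)$; then $\bx^{\bm+\ee_i}$ also lies in $S$ for $1\le i\le d$, so $\ee_1,\ldots,\ee_d\in G(S)$, and $\ee_{d+1}=(\bm,1)-\sum_{j}m_j\ee_j\in G(S)$, whence $G(S)=\ZZ^{d+1}$ and $L(S)=\RR^{d+1}$. For $S=\Gamma_{n_0,\bn}$ a minimal generator of $\bJ_\bn$ works, since $\beta(\bJ_\bn)\le\beta|\bn|$ and $\beta|\bn|+1\le c(n_0+|\bn|)$ as $c\ge\beta+1$; for $S=\widehat{\Gamma}_{n_0,\bn}$ I would additionally use that $I_{n_0}$ is $\mm$-primary, so $\mm^{c'n_0}\subseteq I_{n_0}$ for a suitable $c'$, hence $\mm^{c'n_0}\bJ_\bn\subseteq I_{n_0}\bJ_\bn$ contains a monomial of degree $\le c'n_0+\beta|\bn|$, leaving enough room once $c$ is taken larger than both $\beta$ and $c'$ (a harmless strengthening of the choice in \autoref{lem_eq_filt}).

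Part~(iii) I expect to be purely formal, exactly as in \autoref{lem_props_Gamma}(iii): $S\subseteq\NN^{d+1}\subseteq M$ gives admissibility; every nonzero element of $S$ has positive last coordinate and satisfies $|\bm|\le c(n_0+|\bn|)\,m$, so $\Con(S)$ is a pointed cone meeting $\partial M=\RR^{d}\times\{0\}$ only at the origin, hence $(S,M)$ is strongly admissible with $q=\dim(\partial M)=d$; and $\ind(S,M)=\ind(S,\partial M)=1$ follows from $G(S)=\ZZ^{d+1}$. For parts~(iv) and~(v) the crux is the level-set identity, checked by inspecting the definitions: for every $m\ge 0$,
$$
\pi_1\big([\Gamma_{0,n\ee_i}]_m\big)=\pi_1\big([\Gamma_{0,\ee_i}]_{nm}\big)\quad\text{and}\quad \pi_1\big([\Gamma_{n,\mathbf{0}}]_m\big)=\pi_1\big([\Gamma_{1,\mathbf{0}}]_{nm}\big),
$$
together with the two analogues for $\widehat{\Gamma}$, each side being the set of $\bm$ with $\bx^\bm$ in the relevant ideal at parameter $nm$ and $|\bm|\le cnm$. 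Passing to the closed cones---whose $\pi_1$-projected height-one slice is the closure of the rescaled union of these level sets---then yields $\pi_1(\Delta(\Gamma_{0,n\ee_i}))=n\,\pi_1(\Delta(\Gamma_{0,\ee_i}))$ and $\pi_1(\Delta(\Gamma_{n,\mathbf{0}}))=n\,\pi_1(\Delta(\Gamma_{1,\mathbf{0}}))$, and likewise for $\widehat{\Gamma}$; this is the assertion, since $\Delta(S)$ sits at height $1$ because $\ind(S,M)=1$.

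The one step that is not fully automatic---and the one I would write out in detail---is the $\widehat{\Gamma}$ case of part~(ii): reconciling the $\mm$-primary bound $\mm^{c'n_0}\subseteq I_{n_0}$ with the generator-degree bound $\beta(\bJ_\bn)\le\beta|\bn|$ so that a single monomial, together with all of its shifts by $\ee_1,\ldots,\ee_d$, remains under $|\bm|\le c(n_0+|\bn|)$. This is precisely where the freedom in enlarging $c$ is used, and where the hypothesis that $\II$ consists of $\mm$-primary ideals genuinely enters; the rest is a transcription of \autoref{lem_props_Gamma}.
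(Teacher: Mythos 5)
Your proof is correct and takes the same route as the paper, which simply asserts that the argument of \autoref{lem_props_Gamma} carries over verbatim to $\Gamma_{n_0,\bn}$ and $\widehat{\Gamma}_{n_0,\bn}$. Your added observation --- that for $G(\widehat{\Gamma}_{n_0,\mathbf{0}})=\ZZ^{d+1}$ one needs a monomial of $I_{n_0}$ of degree strictly below $cn_0$, which forces $c>c'$ rather than merely $c\ge c'$ as in \autoref{lem_eq_filt} --- is a legitimate (and harmless to impose) refinement of the paper's ``verbatim'' claim, and the rest of the transcription, including the level-set identities underlying (iv) and (v), is accurate.
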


The next proposition decomposes $\Gamma_{n_0,\bn}$ and $\widehat{\Gamma}_{n_0,\bn}$ as the levelwise sum of simpler semigroups (this result plays the same role that \autoref{prop_decomp_Gamma} played in the previous subsection).

\begin{proposition}
	\label{prop_decomp_Gamma_non_mm}
	Assume \autoref{setup_mixed_mult_general}.
	We have the following equalities:
	\begin{enumerate}[(i)]
		\item $
		\Gamma_{n_0,\bn} \,=\,   \Gamma_{n_0,\mathbf{0}} \oplus_t \Gamma_{0,n_1\ee_1} \oplus_t \cdots \oplus_t  	\Gamma_{0,n_r\ee_r}.
		$
		\item $
		\widehat{\Gamma}_{n_0,\bn} \,=\,   \widehat{\Gamma}_{n_0,\mathbf{0}} \oplus_t \widehat{\Gamma}_{0,n_1\ee_1} \oplus_t \cdots \oplus_t  	\widehat{\Gamma}_{0,n_r\ee_r}.
		$
	\end{enumerate}
\end{proposition}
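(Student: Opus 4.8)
The plan is to prove both identities level by level. Recall that the levelwise sum $A\oplus_t B$ of subsets of $\NN^{d+1}$ is characterized by $\pi_1\big((A\oplus_t B)\cap\pi^{-1}(m)\big)=\pi_1\big(A\cap\pi^{-1}(m)\big)+\pi_1\big(B\cap\pi^{-1}(m)\big)$ for all $m$, and that a subset of $\NN^{d+1}$ is recovered from the $\pi_1$-images of its level sets (because $\pi_1$ is injective on each slice $\NN^d\times\{m\}$). So it suffices to verify, for every $m\in\NN$, the Minkowski-sum identity of subsets of $\NN^d$
\[
\pi_1\big([\Gamma_{n_0,\bn}]_m\big)=\pi_1\big([\Gamma_{n_0,\mathbf 0}]_m\big)+\pi_1\big([\Gamma_{0,n_1\ee_1}]_m\big)+\cdots+\pi_1\big([\Gamma_{0,n_r\ee_r}]_m\big),
\]
together with its analogue with every $\Gamma$ replaced by $\widehat\Gamma$. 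Unwinding \autoref{eq_def_Gamma}, \autoref{eq_def_Gamma_hat} and $I_0=R=\bJ_{m\mathbf 0}$, one has $\pi_1\big([\widehat\Gamma_{n_0,\bn}]_m\big)=\{\bm\in\NN^d:\bx^\bm\in I_{mn_0}\bJ_{m\bn},\ |\bm|\le cm(n_0+|\bn|)\}$, $\pi_1\big([\widehat\Gamma_{n_0,\mathbf 0}]_m\big)=\{\bm:\bx^\bm\in I_{mn_0},\ |\bm|\le cmn_0\}$, and $\widehat\Gamma_{0,n_i\ee_i}=\Gamma_{0,n_i\ee_i}$ with $\pi_1\big([\Gamma_{0,n_i\ee_i}]_m\big)=\{\bm:\bx^\bm\in J(i)_{mn_i},\ |\bm|\le cmn_i\}$. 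I would carry out part (ii) in detail; part (i) is the same argument with the ideal $I_{mn_0}$ replaced by $R$ throughout (so the first slot carries only a degree bound), and is strictly easier.

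The inclusion $\supseteq$ is purely formal: a decomposition $\bm=\bm_0+\bm_1+\cdots+\bm_r$ with $\bx^{\bm_0}\in I_{mn_0}$, $\bx^{\bm_i}\in J(i)_{mn_i}$, $|\bm_0|\le cmn_0$ and $|\bm_i|\le cmn_i$ gives at once $\bx^\bm\in I_{mn_0}\bJ_{m\bn}$ and $|\bm|\le cm(n_0+|\bn|)$. The content is in $\subseteq$. Start from $\bm$ with $\bx^\bm\in I_{mn_0}\bJ_{m\bn}$ and $|\bm|\le cm(n_0+|\bn|)$. Since a product of monomial ideals is generated by the products of minimal generators of the factors, and a monomial lies in a monomial ideal iff it is divisible by a minimal generator, there are minimal generators $\bx^{\fw_0}$ of $I_{mn_0}$ and $\bx^{\fw_i}$ of $J(i)_{mn_i}$ $(1\le i\le r)$ whose product divides $\bx^\bm$. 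Put $\bz:=\bm-\fw_1-\cdots-\fw_r\in\NN^d$, so $\bx^\bz=\bx^\bm/(\bx^{\fw_1}\cdots\bx^{\fw_r})$ is a multiple of $\bx^{\fw_0}$ and hence $\bx^\bz\in I_{mn_0}$. By the hypothesis $\beta(J(i)_n)\le\beta n$ of \autoref{setup_mixed_mult_general} and $c>\beta$, we get $|\fw_i|\le\beta mn_i\le cmn_i$, so each slot $i$ has nonnegative slack $cmn_i-|\fw_i|$.

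The crux is what to do with $\bz$. If $|\bz|\le cmn_0$, then $\bm_0:=\bz$ and $\bm_i:=\fw_i$ already work. Otherwise set $\epsilon:=|\bz|-cmn_0>0$ and observe that the global degree bound forces $|\bz|=|\bm|-\sum_i|\fw_i|\le cmn_0+\sum_i(cmn_i-|\fw_i|)$, i.e.\ $\epsilon\le\sum_i(cmn_i-|\fw_i|)$. Hence one can pick integers $0\le a_i\le cmn_i-|\fw_i|$ with $\sum_i a_i=\epsilon$, peel off $a_i$ units of $\bz$ (in coordinates where $\bz$ is positive) to obtain $\delta_i\le\bz$ in $\NN^d$ with $|\delta_i|=a_i$, and put $\bm_0:=\bz-\sum_i\delta_i$ and $\bm_i:=\fw_i+\delta_i$. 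Then $|\bm_i|\le cmn_i$ and $\bx^{\bm_i}$ is a multiple of $\bx^{\fw_i}$, hence $\bx^{\bm_i}\in J(i)_{mn_i}$; and although shrinking $\bx^\bz$ may have destroyed membership in $I_{mn_0}$, the monomial $\bx^{\bm_0}$ now has degree exactly $cmn_0$, so $\bx^{\bm_0}\in\mm^{cmn_0}\subseteq I_{mn_0}$ — the last inclusion is precisely why $c$ was chosen with $c\ge c'$ in \autoref{lem_eq_filt}, as $I_1\supseteq\mm^{c'}$ forces $I_{mn_0}\supseteq I_1^{mn_0}\supseteq\mm^{c'mn_0}\supseteq\mm^{cmn_0}$. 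Since $\bm=\bm_0+\bm_1+\cdots+\bm_r$, this proves $\subseteq$, and part (ii).

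I expect this redistribution to be the only real obstacle: the leftover exponent $\bz$ produced by peeling generators off the $J(i)$'s need not have degree $\le cmn_0$, and a monomial cannot in general be shrunk while remaining inside $I_{mn_0}$. The trick is exactly the one above — shrink $\bz$ down to degree \emph{exactly} $cmn_0$, where membership in $I_{mn_0}$ becomes automatic thanks to $\mm^{cmn_0}\subseteq I_{mn_0}$, and absorb the surplus degree into the other slots, whose combined slack suffices precisely because of $\epsilon\le\sum_i(cmn_i-|\fw_i|)$ (which is just the degree bound on $\bm$). The semigroup and admissibility facts used around this proposition are routine.
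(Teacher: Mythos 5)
Your proof is correct, and both directions match the paper's strategy: the containment $\supseteq$ is the same formal observation, and the substance in both cases is a redistribution of degrees among the $r+1$ slots subject to the bounds $|\bm_i|\le cmn_i$. The mechanism of the redistribution differs, though. The paper starts from an \emph{arbitrary} factorization $\bx^\bm=\bx^{\bm_0}\cdots\bx^{\bm_r}$ with $\bx^{\bm_i}$ in the respective ideal, introduces the defect $l(\bm_0,\ldots,\bm_r)=\sum_i\max\{|\bm_i|-cmn_i,0\}$, and inducts: whenever some slot is overweight and some slot is underweight, it transfers a single unit of degree between them, using $\beta(J(j_1)_{mn_{j_1}})\le cmn_{j_1}$ (and the analogous bound $\beta(I_n)\le c'n\le cn$ coming from $I_n\supseteq\mm^{c'n}$) to keep the overweight monomial in its ideal after dividing by a variable. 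You instead anchor immediately on \emph{minimal generators} $\bx^{\fw_i}$ of the $J(i)_{mn_i}$, which satisfy their degree bounds automatically, dump all the excess into the $I$-slot as $\bz$, and perform a single redistribution of the surplus $\epsilon=|\bz|-cmn_0$ back into the slack $\sum_i(cmn_i-|\fw_i|)$; the one genuinely new ingredient is that membership of the shrunken $\bx^{\bm_0}$ in $I_{mn_0}$ is recovered not by tracking divisibility by $\bx^{\fw_0}$ but by the observation that a monomial of degree exactly $cmn_0$ lies in $\mm^{cmn_0}\subseteq I_{mn_0}$. This buys you a one-shot argument in place of the paper's induction, at the (negligible) cost of treating the $I$-slot asymmetrically; both arguments use exactly the hypotheses of \autoref{setup_mixed_mult_general} and the choice of $c$ in \autoref{lem_eq_filt}, so neither is more general than the other.
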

\begin{proof}
	(ii)
	For each $m \ge 0$, we need to show that 
	$$
	\pi_1([\widehat{\Gamma}_{n_0,\bn}]_m) \,=\, 
	\pi_1([\widehat{\Gamma}_{n_0,\mathbf{0}}]_m) +  \pi_1([\widehat{\Gamma}_{0,n_1\ee_1}]_m) + \cdots +	\pi_1([\widehat{\Gamma}_{0,n_r\ee_r}]_m).
	$$
	
	Fix $m \in \ZZ_{>0}$.
	
	First, we concentrate on the inclusion ``$\supset$''.
	Let $w_0 \in \big[\widehat{\Gamma}_{n_0,\mathbf{0}}\big]_m$ and, 
	for each $1 \le i \le r$, let $w_i \in \big[\widehat{\Gamma}_{0,n_i\ee_i}\big]_m$.
	Note that, for each $1 \le i \le r$, there exists $\bx^{\bm_i} \in J(i)_{mn_i}$ such that $w_i = (\bm_i, m) \in \NN^{d+1}$. 
	Similarly, there exists $\bx^{\bm_0} \in I_{mn_0}$ such that $w_0 = (\bm_0, m) \in \NN^{d+1}$. 
	Since $\lvert \bm_i \rvert \le c m n_i$ for $0 \le i \le r$, it is clear that 
	$
	\bx^{\bm_0}\cdots \bx^{\bm_r} \in I_{mn_0}\bJ_{m \bn}$ and  $\lvert \bm_0+\cdots+ \bm_r \rvert = \lvert \bm_0\rvert	+ \cdots + \lvert \bm_r \rvert	\le c m (n_0+\lvert \bn \rvert).
	$
	Therefore, it follows that $\widehat{\Gamma}_{n_0,\bn} \supset \widehat{\Gamma}_{n_0,\mathbf{0}} \oplus_t \widehat{\Gamma}_{0,n_1\ee_1} \oplus_t \cdots \oplus_t  	\widehat{\Gamma}_{0,n_r\ee_r}$.
	
	\smallskip
	
	Next, we focus on the inclusion ``$\subset$''.
	Let $w \in \big[\widehat{\Gamma}_{n_0,\bn}\big]_m$.
	Since $\II, \JJ(1),\ldots,\JJ(r)$ are graded families of monomial ideals, there exist $\bx^{\bm_0} \in I_{mn_0}$ and $\bx^{\bm_i} \in J(i)_{mn_i}$ such that $w = (\bm_0+\cdots +\bm_r, m) \in \NN^{d+1}$.

	By assumption we have $\sum_{i = 0}^r \lvert \bm_i \rvert \le c m (n_0+\lvert \bn \rvert)$.
	For ease of notation, set $J(0)_n = I_n$ for all $n \in \NN$.
	
	Let $l(\bm_0,\ldots,\bm_r) := \sum_{i = 0}^r \max\{ \lvert \bm_i \rvert - c m n_i,\, 0 \}$.
	If $l(\bm_0,\ldots,\bm_r) = 0$, it then follows that $\lvert \bm_i \rvert \le c m n_i$ for $0 \le i \le r$, and so we obtain that $\pi_1(w) = \pi_1(w_0) + \cdots + \pi_1(w_r)$ where $w_0 = (\bm_0, m) \in \big[\widehat{\Gamma}_{n_0,\mathbf{0}}\big]_m$ and $w_i = (\bm_i, m) \in \big[\widehat{\Gamma}_{0,n_i\ee_i}\big]_m$ for $1 \le i \le r$.
	On the other hand, suppose that $l(\bm_0,\ldots,\bm_r) > 0$.
	Thus, there exist $0 \le j_1,j_2 \le r$ such that $\lvert \bm_{j_1} \rvert > c m n_{j_1}$ and $\lvert \bm_{j_2} \rvert < c m n_{j_2}$.
	From the fact that  $\beta(J(j_1)_{mn_{j_1}}) \le cmn_{j_1}$, we can choose $1 \le k \le d$ such that $\bx^{\bm_{j_1}-\ee_k} \in J(j_1)_{mn_{j_1}}$.
	For $0 \le i \le r$, we now set
	$$
	\bx^{\bm_i'} \in J(i)_{mn_i} \qquad \text{ by } \qquad  \bm_i' = \begin{cases}
	\bm_{j_1} - \ee_k \quad \text{ if } i = j_1\\
	\bm_{j_2} + \ee_k \quad \text{ if } i = j_2\\
	\bm_i \quad\quad\;\,\,\;\;\,\, \text{ otherwise.}\\
	\end{cases}
	$$
	Notice that $\pi_1(w) = \bm_0'+\cdots+\bm_r'$ and $l(\bm_0',\ldots,\bm_r')=l(\bm_0,\ldots,\bm_r)-1$.
	Therefore, by inducting on $l(\bm_1,\ldots,\bm_r)$, we obtain the other inclusion $\widehat{\Gamma}_{n_0,\bn} \subset \widehat{\Gamma}_{n_0,\mathbf{0}} \oplus_t \widehat{\Gamma}_{0,n_1\ee_1} \oplus_t \cdots \oplus_t  	\widehat{\Gamma}_{0,n_r\ee_r}$.
	
	\smallskip
	
	(i) This part follows similarly, for example by following the arguments of  part (ii) with  $I_n = R$ for all $n \in \NN$.
\end{proof}

From \autoref{lem_eq_filt} and the fact that $\II, \JJ(1), \ldots, \JJ(r)$ are graded families of monomial ideals, we obtain the following equalities
\begin{align}
	\label{eq_diff_gen_case}
	\begin{split}
	\dim_{\kk}\left(\bJ_{m\bn}/I_{mn_0}\bJ_{m\bn}\right) &= \dim_{\kk}\left(\bJ_{m\bn} / \left(\mm^{cm(n_0+|\bn|)+1} \, \cap \, \bJ_{m\bn}\right) \right)\\
	 & \qquad\qquad - 
	\dim_{\kk}\left(I_{mn_0}\bJ_{m\bn} / \left(\mm^{cm(n_0+|\bn|)+1} \, \cap \, I_{mn_0}\bJ_{m\bn}\right) \right)\\
	&= \#\big[\Gamma_{n_0,\bn}\big]_m \;-\; \#\big[\widehat{\Gamma}_{n_0,\bn}\big]_m.
	\end{split}	
\end{align}

We are now ready for the main result of this section. 
We show the existence of a homogeneous polynomial that allows us to define the mixed multiplicities of the graded families $\II, \JJ(1), \ldots, \JJ(r)$.
Additionally, we explicitly describe this polynomial in terms of the mixed volume of certain Newton-Okounkov bodies.

\begin{theorem}
	\label{thm_general_case}
	Assume \autoref{setup_mixed_mult_general}.
	The function 
	$$
	F(n_0,n_1,\ldots,n_r) = \lim_{m\to \infty} \frac{\dim_\kk\big(J(1)_{mn_1}\cdots J(r)_{mn_r}\,\big/\,I_{mn_0}J(1)_{mn_1}\cdots J(r)_{mn_r}\big)}{m^d}
	$$
	is equal to a homogeneous polynomial $G(n_0,\bn) = G(n_0,n_1,\ldots,n_r)$ of total degree $d$ with real coefficients for all $n_0 \in \NN$ and $\bn = (n_1,\ldots,n_r) \in \NN^r$.
	Explicitly, the polynomial $G(n_0,\bn)$ is given by 
	$$
		G(n_0,\bn) = \sum_{d_0+|\dd| = d} \frac{1}{d_0!\dd!} \,\Bigg( \MV_d\left({\Delta(\Gamma)}_{(d_0,\dd)}\right)
		 - \MV_d\left({\Delta(\widehat{\Gamma})}_{(d_0,\dd)}\right) \Bigg)\, n_0^{d_0}\bn^\dd,
	$$
	where $\Delta(\Gamma)$ and $\Delta(\widehat{\Gamma})$ denote the sequences of Newton-Okounkov bodies $$
	\Delta(\Gamma) = \left(\Delta(\Gamma_{1,\mathbf{0}}),\Delta(\Gamma_{0,\ee_1}),\ldots,\Delta(\Gamma_{0,\ee_r})\right) \quad \text{ and } \quad \Delta(\widehat{\Gamma}) = \left(\Delta(\widehat{\Gamma}_{1,\mathbf{0}}),\Delta(\widehat{\Gamma}_{0,\ee_1}),\ldots,\Delta(\widehat{\Gamma}_{0,\ee_r})\right),
	$$
	respectively.
\end{theorem}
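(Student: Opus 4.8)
The plan is to run the argument of \autoref{thm_m_primary_case} with the single semigroup $\Gamma_{\bn}$ replaced by the pair $\Gamma_{n_0,\bn},\widehat{\Gamma}_{n_0,\bn}$ and with \autoref{eq_diff_gen_case} playing the role of \autoref{eq_len_equal_num_points}.

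First I would fix $n_0\in\NN$ and $\bn=(n_1,\ldots,n_r)\in\NN^r$. By \autoref{eq_diff_gen_case} the numerator defining $F(n_0,\bn)$ equals $\#[\Gamma_{n_0,\bn}]_m-\#[\widehat{\Gamma}_{n_0,\bn}]_m$. By \autoref{lem_props_Gamma_non_mm_prim}(iii) both $(\Gamma_{n_0,\bn},M)$ and $(\widehat{\Gamma}_{n_0,\bn},M)$ are strongly admissible with $\dim(\partial M)=d$ and $\ind(-,\partial M)=1$, so \autoref{thm_limit_KK} applies to each and yields
$$
F(n_0,\bn)=\lim_{m\to\infty}\frac{\#[\Gamma_{n_0,\bn}]_m}{m^d}-\lim_{m\to\infty}\frac{\#[\widehat{\Gamma}_{n_0,\bn}]_m}{m^d}=\Vol_d\big(\Delta(\Gamma_{n_0,\bn})\big)-\Vol_d\big(\Delta(\widehat{\Gamma}_{n_0,\bn})\big).
$$

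Next I would pass from semigroups to convex bodies. Combining the levelwise decomposition \autoref{prop_decomp_Gamma_non_mm}(i) with \cite[Proposition 1.32]{KAVEH_KHOVANSKII}, which turns a levelwise sum of (strongly admissible) semigroups into a Minkowski sum of Newton-Okounkov bodies, and using the scaling identities \autoref{lem_props_Gamma_non_mm_prim}(iv),(v), one gets
$$
\pi_1\big(\Delta(\Gamma_{n_0,\bn})\big)=n_0\,\pi_1\big(\Delta(\Gamma_{1,\mathbf{0}})\big)+\sum_{i=1}^r n_i\,\pi_1\big(\Delta(\Gamma_{0,\ee_i})\big),
$$
and the analogous identity for $\widehat{\Gamma}_{n_0,\bn}$ from \autoref{prop_decomp_Gamma_non_mm}(ii). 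As in \autoref{eq_Vol_gamma_n}, each $\Delta(\Gamma_{n_0,\bn})$ sits inside the hyperplane $\pi^{-1}(1)$, on which $\pi_1$ is an isometry onto $\RR^d$; hence $\Vol_d(\Delta(\Gamma_{n_0,\bn}))=\Vol_d(\pi_1(\Delta(\Gamma_{n_0,\bn})))$ and the mixed volumes of the projected bodies coincide with those of the original ones. Applying Minkowski's formula \autoref{eq_volume_poly} to the two Minkowski sums then gives
$$
\Vol_d\big(\Delta(\Gamma_{n_0,\bn})\big)=\sum_{d_0+|\dd|=d}\frac{1}{d_0!\,\dd!}\,\MV_d\big({\Delta(\Gamma)}_{(d_0,\dd)}\big)\,n_0^{d_0}\bn^\dd
$$
and the corresponding polynomial for $\Delta(\widehat{\Gamma})$. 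Subtracting the two yields exactly the stated formula for $G(n_0,\bn)$, which is manifestly homogeneous of degree $d$, and the chain of equalities above shows $F=G$ on $\NN^{r+1}$.

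Essentially every step is a routine invocation of results already in place; the one point meriting a little care is the transition from the levelwise decomposition of $\Gamma_{n_0,\bn}$ and $\widehat{\Gamma}_{n_0,\bn}$ to the Minkowski decomposition of their Newton-Okounkov bodies, i.e.\ checking that the hypotheses of \cite[Proposition 1.32]{KAVEH_KHOVANSKII} hold (strong admissibility is \autoref{lem_props_Gamma_non_mm_prim}(iii)) and keeping track of which summand carries the $n_0$-factor versus the $n_i$-factors through the scaling identities \autoref{lem_props_Gamma_non_mm_prim}(iv),(v); the remainder is the multinomial bookkeeping inside \autoref{eq_volume_poly}.
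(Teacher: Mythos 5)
Your proposal is correct and follows essentially the same route as the paper's own proof: both convert the length in \autoref{eq_diff_gen_case} into a difference of counts of semigroup level sets, apply \autoref{thm_limit_KK} via \autoref{lem_props_Gamma_non_mm_prim}(iii), pass through \autoref{prop_decomp_Gamma_non_mm}, \autoref{lem_props_Gamma_non_mm_prim}(iv)--(v) and \cite[Proposition 1.32]{KAVEH_KHOVANSKII} to a Minkowski-sum decomposition of the Newton--Okounkov bodies, and finish with \autoref{eq_volume_poly}. Your added remark that $\pi_1$ is volume-preserving on $\pi^{-1}(1)$ is the same (implicit) step the paper uses in \autoref{eq_Vol_gamma_n}.
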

\begin{proof}
	The proof follows along the same lines of \autoref{thm_m_primary_case}.
	Let $n_0 \in \NN$ and $\bn = (n_1,\ldots,n_r) \in \NN^r$.
	By using \autoref{thm_limit_KK}, \autoref{lem_props_Gamma_non_mm_prim}(iii) and \autoref{eq_diff_gen_case} we obtain the equation 
	\begin{equation}
	\label{eq_F_as_vols_general}
	F(n_0,\bn) \,=\, \lim_{m\to \infty} \frac{\#\big[\Gamma_{n_0,\bn}\big]_m}{m^d} - \lim_{m\to \infty} \frac{\#\big[\widehat{\Gamma}_{n_0,\bn}\big]_m}{m^d} = \Vol_{d}\left(\Delta(\Gamma_{n_0,\bn})\right) - \Vol_{d}\left(\Delta(\widehat{\Gamma}_{n_0,\bn})\right).
	\end{equation}
	From \autoref{prop_decomp_Gamma_non_mm}, \autoref{lem_props_Gamma_non_mm_prim}(iv)(v), \cite[Proposition 1.32]{KAVEH_KHOVANSKII} and \autoref{eq_volume_poly} we obtain that 
	$$
	\Vol_{d}\left(\Delta(\Gamma_{n_0,\bn})\right)  = \sum_{d_0+\lvert \dd \rvert=d} \frac{1}{d_0!\dd!}\MV_d\left({\Delta(\Gamma)}_{(d_0,\dd)}\right)  n_0^{d_0}\bn^\dd
	$$
	 and  
	$$
	\Vol_{d}\left(\Delta(\widehat{\Gamma}_{n_0,\bn})\right)  = \sum_{d_0+\lvert \dd \rvert=d} \frac{1}{d_0!\dd!}\MV_d\left({\Delta(\widehat{\Gamma})}_{(d_0,\dd)}\right)  n_0^{d_0}\bn^\dd.
	$$
	So, the result follows.
\end{proof}

\begin{lemma}
	\label{lem_non_neg_general}
	Assume \autoref{setup_mixed_mult_general} and use the same notation of \autoref{thm_general_case}. 
	Let $d_0 \in \NN$ and $\dd = (d_1,\ldots,d_r) \in \NN^r$ with $d_0 + \lvert \dd \rvert = d$. 
	Then:
	\begin{enumerate}[\rm (i)]
		\item $\MV_d({\Delta(\Gamma)}_{(d_0,\dd)})
		- \MV_d({\Delta(\widehat{\Gamma})}_{(d_0,\dd)}) \ge 0$.
		\item  $\MV_d({\Delta(\Gamma)}_{(d_0,\dd)})
		- \MV_d({\Delta(\widehat{\Gamma})}_{(d_0,\dd)}) = 0$ when $d_0=0$.
	\end{enumerate}
\end{lemma}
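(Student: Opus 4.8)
The plan is to deduce both statements from a single entrywise containment of the sequence of Newton--Okounkov bodies $\Delta(\widehat{\Gamma})$ inside the sequence $\Delta(\Gamma)$ appearing in \autoref{thm_general_case}, and then to invoke the monotonicity of mixed volumes. Thus I would compare, component by component, the two lists
$$\Delta(\Gamma) = \left(\Delta(\Gamma_{1,\mathbf{0}}),\Delta(\Gamma_{0,\ee_1}),\ldots,\Delta(\Gamma_{0,\ee_r})\right), \qquad \Delta(\widehat{\Gamma}) = \left(\Delta(\widehat{\Gamma}_{1,\mathbf{0}}),\Delta(\widehat{\Gamma}_{0,\ee_1}),\ldots,\Delta(\widehat{\Gamma}_{0,\ee_r})\right).$$

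First I would treat the components with $1 \le i \le r$. Since $\II = \{I_n\}_{n\in\NN}$ is a graded family we have $I_0 = R$, hence $I_{m\cdot 0}\bJ_{m\ee_i} = R\cdot J(i)_{m} = J(i)_m$ for every $m$; comparing the defining conditions in \autoref{eq_def_Gamma} and \autoref{eq_def_Gamma_hat}, this shows that $\Gamma_{0,\ee_i}$ and $\widehat{\Gamma}_{0,\ee_i}$ are literally the same semigroup, so $\Delta(\Gamma_{0,\ee_i}) = \Delta(\widehat{\Gamma}_{0,\ee_i})$. Next I would treat the zeroth components. Here $\bJ_{m\mathbf{0}} = J(1)_0\cdots J(r)_0 = R$, so $\Gamma_{1,\mathbf{0}} = \{(\bm,m) : |\bm| \le cm\}$ while $\widehat{\Gamma}_{1,\mathbf{0}} = \{(\bm,m) : \bx^\bm \in I_m,\ |\bm| \le cm\}$; since $I_m \subseteq R$ this yields the semigroup inclusion $\widehat{\Gamma}_{1,\mathbf{0}} \subseteq \Gamma_{1,\mathbf{0}}$. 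Passing to the associated cones, intersecting with $\pi^{-1}(1)$ — legitimate because $\ind(\cdot,M) = 1$ for both semigroups by \autoref{lem_props_Gamma_non_mm_prim}(iii), so both Newton--Okounkov bodies sit at level $1$ — and projecting by $\pi_1$, one gets $\pi_1(\Delta(\widehat{\Gamma}_{1,\mathbf{0}})) \subseteq \pi_1(\Delta(\Gamma_{1,\mathbf{0}}))$.

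Combining the two points, each convex body in $\Delta(\widehat{\Gamma})$ is contained in the corresponding body of $\Delta(\Gamma)$. For part (i), fix $(d_0,\dd)$ with $d_0 + |\dd| = d$; the multiset $\Delta(\widehat{\Gamma})_{(d_0,\dd)}$ is then entry by entry contained in $\Delta(\Gamma)_{(d_0,\dd)}$, so $\MV_d(\Delta(\widehat{\Gamma})_{(d_0,\dd)}) \le \MV_d(\Delta(\Gamma)_{(d_0,\dd)})$ by the monotonicity of mixed volumes \cite[Eq. 5.25]{SCHNEIDER}, which is the asserted inequality. For part (ii), when $d_0 = 0$ the multiset $\Delta(\Gamma)_{(0,\dd)}$ contains no copy of the zeroth body $\Delta(\Gamma_{1,\mathbf{0}})$ and consists solely of copies of the bodies $\Delta(\Gamma_{0,\ee_i})$; since $\Delta(\Gamma_{0,\ee_i}) = \Delta(\widehat{\Gamma}_{0,\ee_i})$ for every $i$, the multisets $\Delta(\Gamma)_{(0,\dd)}$ and $\Delta(\widehat{\Gamma})_{(0,\dd)}$ coincide, hence so do their mixed volumes, and the difference vanishes.

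I do not expect a genuine obstacle here; the only points deserving a line of justification are that the semigroup inclusion $\widehat{\Gamma}_{1,\mathbf{0}} \subseteq \Gamma_{1,\mathbf{0}}$ does descend to an inclusion of Newton--Okounkov bodies, which rests on the index-one normalization in \autoref{lem_props_Gamma_non_mm_prim}(iii) so that both bodies are taken at the level $\pi^{-1}(1)$, and that the symbol $\MV_d(\Delta(\Gamma)_{(d_0,\dd)})$ in \autoref{thm_general_case} abbreviates the mixed volume of the genuinely $d$-dimensional convex bodies obtained after applying $\pi_1$, so that the classical monotonicity of mixed volumes applies verbatim.
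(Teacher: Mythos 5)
Your proposal is correct and follows essentially the same route as the paper: the paper's proof simply observes that $\Delta(\Gamma_{0,\ee_i}) = \Delta(\widehat{\Gamma}_{0,\ee_i})$ for $1 \le i \le r$ and $\Delta(\Gamma_{1,\mathbf{0}}) \supset \Delta(\widehat{\Gamma}_{1,\mathbf{0}})$, and then invokes monotonicity of mixed volumes. You merely supply the (correct) justifications for these containments and spell out why the $d_0 = 0$ case gives equality rather than just an inequality.
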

\begin{proof}
	Notice that $\Delta(\Gamma_{1,\mathbf{0}}) \supset \Delta(\widehat{\Gamma}_{1,\mathbf{0}})$ and that $\Delta(\Gamma_{0,\ee_i}) = \Delta(\widehat{\Gamma}_{0,\ee_i})$ for all $1 \le i \le r$.
	So, the result follows from the monotonicity of mixed volumes (see, e.g., \cite[Eq. 5.25]{SCHNEIDER}).
\end{proof}

After proving \autoref{thm_general_case} we can define the mixed multiplicities of graded families of monomial ideals.
As a consequence of \autoref{lem_non_neg_general}, these mixed multiplicities are always non-negative and we can restrict ourselves to the terms of the form $n_0^{d_0+1}\bn^\dd$ in the  definition below.

\begin{definition}\label{def_mix_mult_general}
	Assume \autoref{setup_mixed_mult_general} and let $G(n_0, \bn)$ be as in \autoref{thm_general_case}. 
	Write 
	$$
	G(n_0,\bn) = \sum_{d_0+|\dd| = d-1} \frac{1}{(d_0+1)!\dd!}\, e_{(d_0,\dd)}\left(\II|\JJ(1),\ldots,\JJ(r)\right)\, n_0^{d_0+1}\bn^\dd.
	$$
	For each $d_0 \in \NN$ and $\dd = (d_1,\ldots,d_r) \in \NN^r$ with $d_0+|\dd| = d-1$, we define the non-negative real number $$e_{(d_0,\dd)}\left(\II|\JJ(1),\ldots,\JJ(r)\right)\ge 0$$ to be the {\it mixed multiplicity of type $(d_0,\dd)$ of $\JJ(1) = \{J(1)_n\}_{n\in \NN}$,  $\ldots$, $\JJ(r) = \{J(r)_n\}_{n\in \NN}$ with respect to $\II = \{I_n\}_{n\in \NN}$}.
\end{definition}

The following remark shows that \autoref{def_mix_mult_prim} and \autoref{def_mix_mult_general} agree in the $\mm$-primary case.

\begin{remark}
	Assume \autoref{setup_mixed_mult_general} and suppose that $\JJ(1),\ldots,\JJ(r)$ are also graded families of $\mm$-primary monomial ideals. 
	For all $m, n_0 \in \NN$ and $\bn = (n_1,\ldots,n_r) \in \NN^r$, we have the short exact sequence 
	$$
	0 \rightarrow  \bJ_{m\bn}/I_{mn_0}\bJ_{m\bn} \rightarrow R/I_{mn_0}\bJ_{m\bn} \rightarrow R/\bJ_{m\bn} \rightarrow 0.
	$$
	So, for each $d_0 \in \NN$ and $\dd = (d_1,\ldots,d_r) \in \NN^r$ with $d_0+|\dd| = d$, we can deduce the following:
	\begin{enumerate}[\rm (i)]
		\item If $d_0 = 0$, then $e_{(d_0,\dd)}(\II,\JJ(1),\ldots,\JJ(r)) = e_{\dd}(\JJ(1),\ldots,\JJ(r))$.
		\item If $d_0 > 0$, then $e_{(d_0,\dd)}(\II,\JJ(1),\ldots,\JJ(r)) = e_{(d_0-1,\dd)}(\II\mid\JJ(1),\ldots,\JJ(r))$.
	\end{enumerate}
\end{remark}

\section{A ``Volume = Multiplicity formula" for mixed multiplicities}
\label{subsect_Vol_eq_Mult}

In this section, we focus on proving \autoref{thmB} (see \autoref{thm_mult_eq_vol}) which gives a ``Volume = Multiplicity formula'' for mixed multiplicities.
This can be seen as an extension of the usual ``Volume = Multiplicity formula'' for graded families of ideals (see, e.g., \cite[Theorem 6.5]{cutkosky2014}).
Before that, we need to briefly recall the notion of mixed multiplicities for the case of ideals (for more details, see, e.g., \cite{TRUNG_VERMA_MIXED_VOL}).

Throughout this subsection we adopt \autoref{setup_mixed_mult_general} and the following extra piece of notation.

\begin{notation}\label{not_the_p}
Assume \autoref{setup_mixed_mult_general}. For every $p \in \NN$ and $\bn = (n_1,\ldots,n_r) \in \NN^r$, let $\bJ(p)^\bn$ denote the ideal $J(1)_p^{n_1}\cdots J(r)_p^{n_r}$.
\end{notation}

Let $I \subset R$ be a homogeneous $\mm$-primary ideal and $J_1,\ldots,J_r \subset R$ be homogeneous ideals.
Since $I$ is $\mm$-primary, we have that
\begin{equation}
	\label{eq_alg_mixed_mult}
	T = T(I\mid J_1,\ldots,J_r) \;:=\; \bigoplus_{n_0 \ge 0, n_1 \ge 0 \ldots,n_r \ge 0} I^{n_0}J_1^{n_1}\cdots J_r^{n_r} \big/ I^{n_0+1}J_1^{n_1}\cdots J_r^{n_r}
\end{equation}
is a finitely generated standard $\NN^{r+1}$-graded algebra over the Artinian local ring $R/I$. 
From \cite[Theorem 1.2(a)]{TRUNG_VERMA_MIXED_VOL}, one has a polynomial $P_T(n_0,n_1,\ldots,n_r) \in \QQ[n_0,n_1,\ldots,n_r]$ of degree $d-1=\dim(R)-1$ such that  $P_T(\nu) = \dim_{\kk}\left([T]_{\nu}\right)$ for all $\nu \in \NN^{r+1}$ with $\nu \gg \mathbf{0}$.
Furthermore, if we write 
\begin{equation}
	\label{eq_poly_mixed_mult_ideals}
P_{T}(n_0,n_1,\ldots,n_r) = \sum_{d_0,d_1,\ldots,d_r \ge 0} e(d_0,d_1,\ldots,d_r)\binom{n_0+d_0}{d_0}\binom{n_1+d_1}{d_1}\cdots \binom{n_r+d_r}{d_r},
\end{equation}
then $0 \le e(d_0,d_1,\ldots,d_r) \in \ZZ$ for all $d_0+d_1+\cdots+d_r = d-1$.
For each $d_0 \in \NN$ and $\dd =(d_1,\ldots,d_r) \in \NN^{r}$ with $d_0 + \lvert\dd \rvert = d-1$, we say that
\begin{equation}
	\label{eq_mixed_mult_ideals}
	e_{(d_0,\dd)}\left(I\mid J_1,\ldots,J_r\right) \;:= \; e(d_0,d_1,\ldots,d_r)
\end{equation}
is the \emph{mixed multiplicity of type $(d_0,\dd)$ of $J_1,\ldots,J_r$ with respect to $I$}.
The following lemma shows that the definition given in \autoref{eq_mixed_mult_ideals} agrees with the one given in \autoref{def_mix_mult_general}.

\begin{lemma}
	\label{lem_eq_mixed_mult_two_defs}
	Let $I \subset R$ be a monomial $\mm$-primary ideal and $J_1,\ldots,J_r \subset R$ be monomial ideals.
	Consider the monomial filtrations $\{I^n\}_{n \in \NN}, \{J_1^n\}_{n \in \NN}, \ldots, \{J_r^n\}_{n \in \NN}$ given by the powers of $I,J_1,\ldots,J_r$.
	Then, we have the equality 
	$$
	e_{(d_0,\dd)}\big(\{I^n\}_{n \in \NN}\mid \{J_1^n\}_{n \in \NN},\ldots,\{J_r^n\}_{n \in \NN}\big) = e_{(d_0,\dd)}\left(I\mid J_1,\ldots,J_r\right)
	$$
	for each $d_0 \in \NN$ and $\dd =(d_1,\ldots,d_r) \in \NN^{r}$ with $d_0 + \lvert\dd \rvert = d-1$.
\end{lemma}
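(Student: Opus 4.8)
The plan is to compare the polynomial $G(n_0,\bn)$ of \autoref{def_mix_mult_general} — which by \autoref{thm_general_case} equals the limit function $F(n_0,\bn)$ — with the classical Hilbert polynomial $P_T$ of the $\NN^{r+1}$-graded algebra $T=T(I\mid J_1,\ldots,J_r)$ from \autoref{eq_alg_mixed_mult}. The bridge is the telescoping identity
$$
\dim_\kk\!\left(J_1^{n_1}\cdots J_r^{n_r}\big/\,I^{n_0}J_1^{n_1}\cdots J_r^{n_r}\right)\;=\;\sum_{j=0}^{n_0-1}\dim_\kk\!\left([T]_{(j,n_1,\ldots,n_r)}\right),
$$
valid for all $n_0\in\NN$ and $\bn\in\NN^r$. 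Since $\lim_{m\to\infty}(\cdot)/m^d$ only detects leading homogeneous parts, this identity should turn the degree-$(d-1)$ polynomial $P_T$ into the degree-$d$ polynomial $F$ via a one-variable summation over $j$, after which reading off coefficients finishes the proof.

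First I would record the input from \autoref{eq_poly_mixed_mult_ideals}: there is $\nu_0\in\NN^{r+1}$ with $\dim_\kk([T]_\nu)=P_T(\nu)$ whenever $\nu\ge\nu_0$, and the top-degree form of $P_T$, written as a function of $(j,\bn)$, is $\sum_{d_0+|\dd|=d-1}\frac{e(d_0,\dd)}{d_0!\,\dd!}\,j^{d_0}\bn^\dd$ with $e(d_0,\dd)$ the coefficients of \autoref{eq_poly_mixed_mult_ideals}. I would also note that for each fixed $j$ the function $\bn\mapsto\dim_\kk([T]_{(j,\bn)})$ grows polynomially in $|\bn|$ of degree at most $d-1$, since it is governed by a Hilbert function of a fiber-type multigraded algebra of Krull dimension at most $d$. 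Splitting the telescoping sum at the first coordinate $(\nu_0)_0$, for $\bn$ componentwise large and $n_0>(\nu_0)_0$ I get
$$
\dim_\kk\!\left(J_1^{n_1}\cdots J_r^{n_r}\big/\,I^{n_0}J_1^{n_1}\cdots J_r^{n_r}\right)\;=\;E(\bn)\;+\;Q(n_0,\bn),
$$
where $Q(n_0,\bn):=\sum_{j=0}^{n_0-1}P_T(j,\bn)$ is a genuine polynomial in $(n_0,\bn)$ of total degree $d$ (Faulhaber's formula turns each $\sum_{j=0}^{n_0-1}j^{k}$ into a polynomial in $n_0$ of degree $k+1$), and the error $E(\bn)$, collecting the finitely many small-$j$ discrepancies, satisfies $E(\bn)=O(|\bn|^{d-1})$.

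Next I would substitute $(n_0,\bn)\mapsto(mn_0,m\bn)$ with $n_0>0$ and all entries of $\bn$ positive, divide by $m^d$, and let $m\to\infty$: the term $E(m\bn)/m^d\to0$, while $Q(mn_0,m\bn)/m^d$ converges to the leading homogeneous form $Q^{\mathrm{top}}$ of $Q$. Hence $F(n_0,\bn)=Q^{\mathrm{top}}(n_0,\bn)$ on a Zariski-dense set of $(n_0,\bn)$, so as polynomials. Applying $\sum_{j=0}^{n_0-1}j^{d_0}=\tfrac{1}{d_0+1}n_0^{d_0+1}+(\text{lower order})$ to the top form of $P_T$ gives
$$
F(n_0,\bn)\;=\;Q^{\mathrm{top}}(n_0,\bn)\;=\;\sum_{d_0+|\dd|=d-1}\frac{e(d_0,\dd)}{(d_0+1)!\,\dd!}\,n_0^{d_0+1}\bn^\dd ,
$$
and comparing with the expansion of $G(n_0,\bn)=F(n_0,\bn)$ in \autoref{def_mix_mult_general} forces $e_{(d_0,\dd)}\big(\{I^n\}\mid\{J_1^n\},\ldots,\{J_r^n\}\big)=e(d_0,\dd)$, which equals $e_{(d_0,\dd)}(I\mid J_1,\ldots,J_r)$ by \autoref{eq_mixed_mult_ideals}.

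The main obstacle is the uniformity bookkeeping in the middle step: the statement ``$\dim_\kk([T]_\nu)=P_T(\nu)$ for $\nu\gg\mathbf{0}$'' does not on its own control the slices with small $j$, so those must be isolated and shown to grow like $O(|\bn|^{d-1})$ — equivalently, each such slice must be governed by a polynomial of degree $<d$ — so that they wash out after dividing by $m^d$; once that is in place the remaining steps (telescoping, Faulhaber, identifying polynomials via a dense set of evaluations) are routine. Finally, I would remark that the monomial hypothesis on $I,J_1,\ldots,J_r$ enters only to guarantee that \autoref{def_mix_mult_general} applies to the family of powers — the powers of a monomial ideal form a graded family of monomial ideals, and $\beta(J_i^n)\le n\,\beta(J_i)$ supplies the linear bound required in \autoref{setup_mixed_mult_general} — while the comparison itself is a purely (multigraded) Hilbert-function statement.
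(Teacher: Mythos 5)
Your proposal follows essentially the same route as the paper's proof: the telescoping identity over the graded pieces of $T$, splitting off the finitely many small-$j$ slices, Faulhaber summation of the top form of $P_T$, and coefficient comparison. The one step you flag as the "main obstacle" — that each slice $\bn\mapsto\dim_\kk([T]_{(j,\bn)})$ is eventually polynomial of degree at most $d-1$ — is exactly where the paper invests its effort, realizing $[T]_{(j,*,\ldots,*)}$ as a finitely generated module over $[T]_{(0,*,\ldots,*)}$ and bounding the relevant dimension by the analytic spread $\ell(J_1\cdots J_r)\le d$ via the special fiber ring and the Segre embedding; your sketched justification is the right idea and just needs that filling in.
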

\begin{proof}
	Let $F(n_0,\bn)$ be the function $F(n_0,\bn) = \lim_{m\to \infty} \frac{\dim_\kk\big(\bJ^{m\bn}\,\big/\,I^{mn_0}\bJ^{m\bn}\big)}{m^d}$ where $\bn = (n_1,\ldots,n_r)$ and $\bJ^{m\bn}$ denotes the ideal $\bJ^{m\bn}=J_1^{mn_1}\cdots J_r^{mn_r} \subset R$.
	
	For each $n_0 \in \NN$ and $\bn \in \NN^r$ we have the following equality 
	$$
	\dim_\kk\big(\bJ^{\bn}\,\big/\,I^{n_0}\bJ^{\bn}\big) = \sum_{k=0}^{n_0-1} \dim_{\kk}\left(I^k\bJ^{\bn}\,\big/\,I^{k+1}\bJ^{\bn}\right) = \sum_{k=0}^{n_0-1} \dim_{\kk}\left([T]_{(k,\bn)}\right)
	$$
	(where $T$ is the algebra introduced in \autoref{eq_alg_mixed_mult}).
	Let $\nu = (\nu_0,\ldots, \nu_{r}) \in \NN^{r+1}$ such that $P_T(n_0,\bn) = \dim_{\kk}\left([T]_{(n_0,\bn)}\right)$ for all $(n_0,\bn) \ge \nu$.
	Thus, for all $(n_0,\bn) \ge \nu$, we can write 
	\begin{equation}
		\label{eq_quot_as_consecutive_quots}
		\dim_\kk\big(\bJ^{\bn}\,\big/\,I^{n_0}\bJ^{\bn}\big) = \sum_{k=0}^{\nu_0-1}\dim_{\kk}\left([T]_{(k,\bn)}\right) + \sum_{k=\nu_1}^{n_0-1} P_T(k,\bn).
	\end{equation}

	For any $k \in \NN$, one has that $[T]_{(k,*,\ldots,*)} = \bigoplus_{\bn \ge \mathbf{0}} I^{k}\bJ^{\bn} \big/ I^{k+1}\bJ^{\bn}$ is a finitely generated $\NN^r$-graded module over the finitely generated standard $\NN^r$-graded algebra $[T]_{(0,*,\ldots,*)} = \bigoplus_{\bn \ge \mathbf{0}} \bJ^{\bn} \big/ I\bJ^{\bn}$.
	From \cite[Theorem 4.1]{HERMANN_MULTIGRAD} (also, see \cite[Theorem 3.4]{MIXED_MULT}), for all $\bn \gg \mathbf{0}$, we obtain that 
	$$
	\dim_{\kk}\left([T]_{(k,\bn)}\right) = P_{[T]_{(k,*,\ldots,*)}}(\bn) \quad\text{ 	for some polynomial } \quad P_{[T]_{(k,*,\ldots,*)}}(\bn)
	$$ 
  with degree bounded by  $\dim\left(\multProj\left([T]_{(0,*,\ldots,*)}\right)\right)$.
	Since $I$ is an $\mm$-primary ideal, we have the equality $\dim\left(\multProj\left([T]_{(0,*,\ldots,*)}\right)\right) = \dim\left(\multProj\left(\FF(J_1,\ldots,J_r)\right)\right)$, where $\FF(J_1,\ldots,J_r)$ denotes the special fiber ring $\FF(J_1,\ldots,J_r) = \Rees(J_1,\ldots,J_r) \otimes_R R/\mm$.
	By using the Segre embedding we get the isomorphism 
	$$
	\multProj\big(\Rees(J_{1},\ldots,J_{r}) \otimes_R R/\mm\big) \cong \Proj\big(\Rees(J_{1}\cdots J_{r}) \otimes_R R/\mm\big).
	$$
	Therefore, for all $\bn \gg \mathbf{0}$, we obtain that $\dim_{\kk}\left([T]_{(k,\bn)}\right) = P_{[T]_{(k,*,\ldots,*)}}(\bn)$ and 
	\begin{equation}
		\label{eq_upper_bound_deg_pol}
		 \deg\left(P_{[T]_{(k,*,\ldots,*)}}(\bn) \right) \le \dim\left(\Proj\big(\Rees(J_{1}\cdots J_{r}) \otimes_R R/\mm\big)\right) = \ell(J_1\cdots J_r) -1 \le d-1,
	\end{equation}
	where $\ell(J_1\cdots J_r)$ denotes the analytic spread of $J_1\cdots J_r$ and the last inequality follows from \cite[Proposition 5.1.6]{huneke2006integral}.

	By combining \autoref{eq_quot_as_consecutive_quots} and \autoref{eq_upper_bound_deg_pol}, we obtain the following equality
	\begin{align*}
		F(n_0,\bn) = \lim_{m\to \infty} \frac{\dim_\kk\big(\bJ^{m\bn}\,\big/\,I_{mn_0}\bJ^{m\bn}\big)}{m^d} &= \lim_{m\to \infty} \frac{
			\sum_{k=0}^{\nu_0-1}\dim_{\kk}\left([T]_{(k,m\bn)}\right) + \sum_{k=\nu_1}^{mn_0-1} P_T(k,m\bn)
		}{m^d} \\
		&= \lim_{m\to \infty}\frac{\sum_{k=\nu_1}^{mn_0-1} P_T(k,m\bn)}{m^d}. 
	\end{align*}	
	Since $\deg(P_T(n_0,\bn))=d-1$, we can write $F(n_0,\bn)= \lim_{m\to \infty}\frac{\sum_{k=0}^{mn_0-1} P_T(k,m\bn)}{m^d}$.
	Notice that 
	$$
	\sum_{k=0}^{mn_0-1} P_T(k,m\bn) = \sum_{d_0,d_1,\ldots,d_r \ge 0} e_{(d_0,\dd)}\left(I\mid J_1,\ldots,J_r\right)\binom{mn_0+d_0}{d_0+1}\binom{mn_1+d_1}{d_1}\cdots \binom{mn_r+d_r}{d_r}.
	$$
	Therefore, we obtain that $F(n_0,\bn)$ coincides with the following polynomial
	$$
	F(n_0,\bn) = \sum_{d_0+|\dd| = d-1} \frac{1}{(d_0+1)!\dd!}\, e_{(d_0,\dd)}\left(I\mid J_1,\ldots,J_r\right)\, n_0^{d_0+1}\bn^\dd,
	$$
	and so the result follows.
\end{proof}

Let $c$ be as in \autoref{lem_eq_filt}. For ease of notation, we define the following functions (recall \autoref{not_the_p})
\begin{equation}\label{Hp}
H_p(n_0,\bn) := \lim_{m\to \infty} \frac{\dim_{\kk}\left(\bJ(p)^{m\bn} / \left(\mm^{cmp(n_0+|\bn|)+1} \, \cap \, \bJ(p)^{m\bn}\right) \right)}{m^dp^d} 
\end{equation}
and 
\begin{equation}\label{Hphat}
\widehat{H}_p(n_0,\bn) := \lim_{m\to \infty} \frac{\dim_{\kk}\left(I_p^{mn_0}\bJ(p)^{m\bn} / \left(\mm^{cmp(n_0+|\bn|)+1} \, \cap \, I_p^{mn_0}\bJ(p)^{m\bn}\right) \right)}{m^dp^d}. 
\end{equation}
We note that the existence of these limits follow as in \autoref{eq_F_as_vols_general}.

The following technical proposition is needed to  treat the Noetherian case of the formula.

\begin{proposition}
	\label{prop_approx_Noeth_case}
	Assume \autoref{setup_mixed_mult_general}.
	In addition, suppose that $\II, \JJ(1),\ldots,\JJ(r)$ are Noetherian graded families.
	Then, for fixed $n_0 \in \NN$, $\bn \in \NN^r$ and $\varepsilon\in \RR_{>0}$,  there exists $p_0\in \NN$ such that if $p \gs p_0$ then 
	$$
	\Vol_{d}\left(\Delta(\Gamma_{n_0,\bn})\right) \,\ge \,
	H_p(n_0,\bn) \,\ge\, \Vol_{d}\left(\Delta(\Gamma_{n_0,\bn})\right)  - \varepsilon
	$$ 
	and
	$$ \Vol_{d}\big(\Delta(\widehat{\Gamma}_{n_0,\bn})\big) 
	\,\ge\, 
	\widehat{H}_p(n_0,\bn)
	\,\ge \,
	\Vol_{d}\big(\Delta(\widehat{\Gamma}_{n_0,\bn})\big) -\varepsilon.
	$$
\end{proposition}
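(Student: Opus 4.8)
The plan is to exploit the Noetherian hypothesis to pass from the graded families to their ``$p$-th truncations'' $\{J(i)_p^n\}_{n\in\NN}$, which are the ordinary powers of fixed ideals, and to show that the associated semigroups approximate $\Gamma_{n_0,\bn}$ (resp.\ $\widehat{\Gamma}_{n_0,\bn}$) well in the level-wise sense. Concretely, for fixed $p$ set
$$
\Gamma_{n_0,\bn}^{(p)} := \Big\{(\bm,m)\in\NN^{d+1}\mid \bx^\bm\in \bJ(p)^{m\bn},\ |\bm|\le cmp(n_0+|\bn|)\Big\},
$$
and analogously $\widehat{\Gamma}_{n_0,\bn}^{(p)}$ with $\bx^\bm\in I_p^{mn_0}\bJ(p)^{m\bn}$. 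As in \autoref{eq_len_equal_num_points}--\autoref{eq_diff_gen_case} and \autoref{eq_F_as_vols_general}, these are strongly admissible semigroups (the relevant version of \autoref{lem_props_Gamma_non_mm_prim} applies verbatim), so that $H_p(n_0,\bn) = \frac{1}{p^d}\Vol_d\big(\Delta(\Gamma_{n_0,\bn}^{(p)})\big)$ and $\widehat{H}_p(n_0,\bn) = \frac{1}{p^d}\Vol_d\big(\Delta(\widehat{\Gamma}_{n_0,\bn}^{(p)})\big)$, after rescaling the half-space level by $p$. The two inequalities to prove then become statements about how these Newton--Okounkov bodies compare with $\Delta(\Gamma_{n_0,\bn})$ and $\Delta(\widehat{\Gamma}_{n_0,\bn})$.

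The upper bounds $\Vol_d(\Delta(\Gamma_{n_0,\bn}))\ge H_p(n_0,\bn)$ and $\Vol_d(\Delta(\widehat{\Gamma}_{n_0,\bn}))\ge\widehat{H}_p(n_0,\bn)$ are the easy direction: since $\JJ(i)$ is a graded family, $J(i)_p^{n}\subseteq J(i)_{pn}$, hence $\bJ(p)^{m\bn}\subseteq \bJ_{pm\bn}$ and $I_p^{mn_0}\subseteq I_{pmn_0}$, so that $\Gamma_{n_0,\bn}^{(p)}\subseteq \Gamma_{pn_0,p\bn}$ (and similarly with hats) up to the level-rescaling by $p$. Comparing cones and using $\Vol_d(\Delta(\Gamma_{pn_0,p\bn})) = p^d\Vol_d(\Delta(\Gamma_{n_0,\bn}))$ (from \autoref{lem_props_Gamma_non_mm_prim}(iv)(v) and \autoref{prop_decomp_Gamma_non_mm}, or directly from homogeneity of the cone) gives the inequality after dividing by $p^d$.

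The lower bounds are the crux. For the first, I would argue that $\bigcup_{p\ge 1}\frac{1}{p}\Con(\Gamma^{(p)}_{n_0,\bn})$ exhausts $\Con(\Gamma_{n_0,\bn})$ up to closure: any monomial $\bx^\bm\in \bJ_{m\bn}$ witnessing a point of $\Gamma_{n_0,\bn}$ can, by the Noetherian hypothesis on the Rees algebras, be written using generators whose exponents come from a fixed finite set, so that for $p$ large (a multiple of the degrees of those generators) a scaled copy of $\bm$ lies in $\bJ(p)^{\bullet\bn}$; this is exactly the mechanism behind the ``Volume $=$ Multiplicity'' arguments in \cite{cutkosky2014,cutkosky2015}. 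Since $\Con(\Gamma_{n_0,\bn})$ is a polyhedral cone (the families are Noetherian, so $\Gamma_{n_0,\bn}$ is a finitely generated semigroup) and the sets $\frac{1}{p}\Con(\Gamma^{(p)}_{n_0,\bn})$ are increasing in an appropriate cofinal sense, their volumes at level $m=\ind$ converge, and for $p\ge p_0$ they are within $\varepsilon$ of $\Vol_d(\Delta(\Gamma_{n_0,\bn}))$; the same argument with $I_p^{mn_0}\bJ(p)^{m\bn}$ in place of $\bJ_{m\bn}$ handles $\widehat H_p$. The main obstacle is making the convergence of these rescaled Newton--Okounkov bodies precise and uniform: one must check that the finitely many generators of $\Rees(\II),\Rees(\JJ(1)),\dots,\Rees(\JJ(r))$ indeed force, for all sufficiently divisible $p$, the inclusion of a fixed polytope arbitrarily close to $\pi_1(\Delta(\Gamma_{n_0,\bn}))$ into $\frac1p\pi_1(\Delta(\Gamma^{(p)}_{n_0,\bn}))$, and then invoke continuity of volume under Hausdorff convergence of these polytopes.
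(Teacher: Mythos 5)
Your setup and the easy direction are right: the semigroups $\Gamma_{n_0,\bn}^{(p)}$ you introduce are exactly the ones the paper uses (denoted $\Gamma_{n_0,\bn}(p)$ there), the identification $H_p(n_0,\bn)=\tfrac{1}{p^d}\Vol_d\big(\Delta(\Gamma_{n_0,\bn}^{(p)})\big)$ is correct, and the upper bounds follow, as you say, from $\bJ(p)^{m\bn}\subseteq \bJ_{mp\bn}$ and the scaling $\Con(\Gamma_{n_0,\bn})\cap\pi^{-1}(p)=p\,\Delta(\Gamma_{n_0,\bn})$. The problem is the lower bound, which is the entire content of the proposition, and there your argument has a genuine gap. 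The mechanism you invoke --- choosing $p$ divisible by the generation degree $q$ of the Rees algebras so that $J(i)_p^n=J(i)_{np}$, whence $\bJ(p)^{m\bn}=\bJ_{mp\bn}$ and $H_p(n_0,\bn)=\Vol_d(\Delta(\Gamma_{n_0,\bn}))$ exactly --- only produces the inequality along the arithmetic progression $p\in q\NN$, whereas the statement requires it for \emph{every} $p\ge p_0$. For $p$ not divisible by $q$ there is no reason a ``scaled copy'' of a given $\bm$ lands in $\bJ(p)^{\bullet\bn}$, and the claimed Hausdorff convergence of $\tfrac1p\pi_1(\Delta(\Gamma^{(p)}_{n_0,\bn}))$ to $\pi_1(\Delta(\Gamma_{n_0,\bn}))$ for arbitrary large $p$ is precisely the assertion to be proved, not a fact one can invoke; you acknowledge this yourself as ``the main obstacle.''

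The paper closes this gap by separating the two roles of $p$. It introduces the auxiliary semigroup $\mathfrak{A}_{n_0,\bn}=\{(\bm,p): \bx^\bm\in\bJ(p)^{\bn},\ |\bm|\le cp(n_0+|\bn|)\}$ graded by $p$ itself (and $\mathfrak{B}$ for the hatted version), observes the sandwich $m\star[\mathfrak{A}_{n_0,\bn}]_p\subseteq[\Gamma_{n_0,\bn}(p)]_{mp}\subseteq[\mathfrak{A}_{n_0,\bn}]_{mp}$, and applies the sumset-approximation theorem of Lazarsfeld--Musta\c{t}\u{a} \cite[Proposition 3.1]{lazarsfeld09} (or \cite[Theorem 3.3]{cutkosky2014}) to $\mathfrak{A}_{n_0,\bn}$: for all $p\ge p_0$ the growth rate of the $m$-fold sumsets of the single graded piece $[\mathfrak{A}_{n_0,\bn}]_p$ is within $\varepsilon$ of $\Vol_d(\Delta(\mathfrak{A}_{n_0,\bn}))$. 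That theorem is the uniform-in-$p$ input your sketch is missing. The Noetherian hypothesis then enters only once, exactly where your divisibility argument is valid: along $p\in q\NN$ one has $[\mathfrak{A}_{n_0,\bn}]_{mq}=[\Gamma_{n_0,\bn}]_{mq}$, which via \autoref{thm_limit_KK} forces $\Vol_d(\Delta(\mathfrak{A}_{n_0,\bn}))=\Vol_d(\Delta(\Gamma_{n_0,\bn}))$ and completes the proof. If you want to salvage your route, you should make this two-step structure explicit rather than appealing to polyhedrality of $\Con(\Gamma_{n_0,\bn})$ and continuity of volume.
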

\begin{proof}
	Similarly to \autoref{eq_def_Gamma} and \autoref{eq_def_Gamma_hat} we now define 
	\begin{equation*}
		\Gamma_{n_0,\bn}(p) :=\Big\{(\bm,mp) \in \NN^{d+1}\mid \bx^\bm \in \bJ(p)^{m \bn} \mbox{ and } |\bm| \le c mp (n_0+\lvert \bn \rvert)  \Big\}
	\end{equation*}
	and 
	\begin{equation*}
		\widehat{\Gamma}_{n_0,\bn}(p) :=\Big\{(\bm,mp)\in \NN^{d+1}\mid \bx^\bm \in I_{p}^{mn_0}\bJ(p)^{m \bn} \mbox{ and } |\bm| \le c m p (n_0+\lvert \bn \rvert)  \Big\}.
	\end{equation*}
	For each $(n_0,\bn) \in \NN^{d+1}$,  we consider the graded families of monomial ideals 
	$
	\{\bJ(p)^{\bn}\}_{p \in \NN} = \{J(1)_p^{n_1}\cdots J(r)_p^{n_r}\}_{p \in \NN}$ and $\{I_p^{n_0}\bJ(p)^{\bn}\}_{p\in \NN} = \{I_p^{n_0}J(1)_p^{n_1}\cdots J(r)_p^{n_r}\}_{p \in \NN}.
	$
	From these graded families we define the semigroups
	\begin{equation*}
		\mathfrak{A}_{n_0,\bn} :=\Big\{(\bm,p) \in \NN^{d+1}\mid \bx^\bm \in \bJ(p)^{\bn} \mbox{ and } |\bm| \le c p (n_0+\lvert \bn \rvert)  \Big\}
	\end{equation*}
	and 
	\begin{equation*}
		\mathfrak{B}_{n_0,\bn} :=\Big\{(\bm,p)\in \NN^{d+1}\mid \bx^\bm \in I_p^{n_0}\bJ(p)^{\bn} \mbox{ and } |\bm| \le c p (n_0+\lvert \bn \rvert)  \Big\}.
	\end{equation*}
	By construction, for all $p,m \ge 1$ we have the inclusions
	$$
	m \star \big[\mathfrak{A}_{n_0,\bn}\big]_{p} \subset \left[\Gamma_{n_0,\bn}(p) \right]_{mp} \subset  \big[\mathfrak{A}_{n_0,\bn}\big]_{mp}
	\;\; \text{ and } \;\; m \star \big[\mathfrak{B}_{n_0,\bn}\big]_{p} \subset \left[\widehat{\Gamma}_{n_0,\bn}(p) \right]_{mp} \subset  \big[\mathfrak{B}_{n_0,\bn}\big]_{mp}.
	$$
	As a consequence of \cite[Proposition 3.1]{lazarsfeld09} (also, see \cite[Theorem 3.3]{cutkosky2014}) and \autoref{thm_limit_KK}, for a fixed $\varepsilon\in \RR_{>0}$,  there exists $p_0\in \NN$ such that if $p \gs p_0$ then 
	\begin{equation*}
		\Vol_{d}\left(\Delta(\mathfrak{A}_{n_0,\bn})\right) 
		\ge  
		\lim_{m\to \infty} \frac{\#\big(\left[\Gamma_{n_0,\bn}(p) \right]_{mp} \big)}{m^dp^d} 
		\ge 
		\lim_{m\to \infty} \frac{\#\big(m \star \big[\mathfrak{A}_{n_0,\bn}\big]_{p}\big)}{m^dp^d} 
		\ge 
		\Vol_{d}\left(\Delta(\mathfrak{A}_{n_0,\bn})\right)    - \varepsilon
	\end{equation*}
	and
	\begin{equation*}
	\Vol_{d}\left(\Delta(\mathfrak{B}_{n_0,\bn})\right) 
	 \ge  
     \lim_{m\to \infty} \frac{\#\big(\big[\widehat{\Gamma}_{n_0,\bn}(p) \big]_{mp} \big)}{m^dp^d} 
	\ge 
	\lim_{m\to \infty} \frac{\#\big(m \star \big[\mathfrak{B}_{n_0,\bn}\big]_{p}\big)}{m^dp^d} 
	\ge 
	\Vol_{d}\left(\Delta(\mathfrak{B}_{n_0,\bn})\right)    - \varepsilon
	\end{equation*}
	Therefore, using the notation of \autoref{Hp} and \autoref{Hphat}, we obtain the inequalities  $		\Vol_{d}\left(\Delta(\mathfrak{A}_{n_0,\bn})\right) 
	\ge 
	 H_p(n_0,\bn)
	\ge 
	\Vol_{d}\left(\Delta(\mathfrak{A}_{n_0,\bn})\right)    - \varepsilon
	$
	and 
	$\Vol_{d}\left(\Delta(\mathfrak{B}_{n_0,\bn})\right) 
	\ge 
	\widehat{H}_p(n_0,\bn)
	\ge 
	\Vol_{d}\left(\Delta(\mathfrak{B}_{n_0,\bn})\right)    - \varepsilon
	$
	for all $p \ge p_0$.
	
	To conclude the proof we only need to show that the equalities $\Vol_{d}\left(\Delta(\mathfrak{A}_{n_0,\bn})\right)   = \Vol_{d}\left(\Delta(\Gamma_{n_0,\bn})\right)$ and $\Vol_{d}\left(\Delta(\mathfrak{B}_{n_0,\bn})\right)   = \Vol_{d}\left(\Delta(\widehat{\Gamma}_{n_0,\bn})\right)$ hold.

	By the Noetherian assumption, there exists $q > 0$ such that 
	\begin{equation}
		\label{eq_Noetherian}
			J(i)_{q}^n=J(i)_{nq} \;\; \text{ and } \;\; I_{q}^n=I_{nq} \;\; \text{ for every } n\gs 0, 1\ls i\ls r
	\end{equation}
	(see, e.g., \cite[Lemma 13.10]{GORTZ_WEDHORN}, \cite[Theorem 2.1]{herzog2007}). 
	Hence $\bJ(mq)^{\bn} = \bJ_{mq\bn}$ and $I_{mq}^{n_0}\bJ(mq)^{\bn} = I_{mqn_0}\bJ_{mq\bn}$ for all $m \ge 0$, and so \autoref{thm_limit_KK} yields the required equalities
	\begin{equation*}
	\Vol_{d}\left(\Delta(\Gamma_{n_0,\bn})\right) 
	=
	\lim_{m\to \infty} \frac{\#\big[\Gamma_{n_0,\bn}\big]_{mq}}{m^dq^d} 
	=
	\lim_{m\to \infty} \frac{\#\big[\mathfrak{A}_{n_0,\bn}\big]_{mq}}{m^dq^d} 
	=  
	\Vol_{d}\left(\Delta(\mathfrak{A}_{n_0,\bn})\right)
	\end{equation*}
	and
	\begin{equation*}
	\Vol_{d}\left(\Delta(\widehat{\Gamma}_{n_0,\bn})\right) 
	=
	\lim_{m\to \infty} \frac{\#\big[\widehat{\Gamma}_{n_0,\bn}\big]_{mq}}{m^dq^d} 
	=
	\lim_{m\to \infty} \frac{\#\big[\mathfrak{B}_{n_0,\bn}\big]_{mq}}{m^dq^d} 
	=  
	\Vol_{d}\left(\Delta(\mathfrak{B}_{n_0,\bn})\right). 
	\end{equation*}
	Therefore, the proof of the proposition is now complete.
\end{proof}

We now focus on approximating the graded families $\II, \JJ(1),\ldots,\JJ(r)$ by using successive truncations of them.
For that, we need to introduce some additional notation.

\begin{notation}
	\label{notation_truncations}
	Let $a>0$ be a positive integer. 
	Let $\II_a=\{I_{a, n}\}_{n\in \NN}$ be the Noetherian graded family generated by $I_1,\ldots, I_a$, that is, for $n>a$ one has $I_{a,n}=\sum_{i=1}^{n-1} I_{a,i}I_{a,n-i}$.   
	Likewise, define $\JJ(i)_a=\{J(i)_{a,n}\}_{n\in\NN}$ for all $1 \le i \le r$.	
\end{notation}

	For a vector $\bn=(n_1,\ldots, n_r) \in \NN^{r}$, we  abbreviate $\bJ_{a,\bn}=J(1)_{a,n_1}\cdots J(r)_{a,n_r}$. 
	As in \autoref{eq_def_Gamma} and \autoref{eq_def_Gamma_hat}, we now define 
	\begin{equation*}
	\Gamma_{a,n_0,\bn} :=\Big\{(\bm,m)=(m_1,\ldots, m_d,m)\in \NN^{d+1}\mid \bx^\bm \in \bJ_{a,m \bn} \mbox{ and } |\bm| \le c m (n_0+\lvert \bn \rvert)  \Big\}
	\end{equation*}
	and 
	\begin{equation*}
	\widehat{\Gamma}_{a,n_0,\bn} :=\Big\{(\bm,m)=(m_1,\ldots, m_d,m)\in \NN^{d+1}\mid \bx^\bm \in I_{a,mn_0}\bJ_{a,m \bn} \mbox{ and } |\bm| \le c m (n_0+\lvert \bn \rvert)  \Big\}.
	\end{equation*}
	For every $p \in \NN$ and $\bn = (n_1,\ldots,n_r) \in \NN^r$, let $\bJ(a,p)^\bn$ denote the ideal $J(1)_{a,p}^{n_1}\cdots J(r)_{a,p}^{n_r}$.
	Additionally, we have the following functions
	$$
	H_{a,p}(n_0,\bn) := \lim_{m\to \infty} \frac{\dim_{\kk}\left(\bJ(a,p)^{m\bn} / \left(\mm^{cmp(n_0+|\bn|)+1} \, \cap \, \bJ(a,p)^{m\bn}\right) \right)}{m^dp^d} 
	$$
	and 
	$$
	\widehat{H}_{a,p}(n_0,\bn) := \lim_{m\to \infty} \frac{\dim_{\kk}\left(I_{a,p}^{mn_0}\bJ(a,p)^{m\bn} / \left(\mm^{cmp(n_0+|\bn|)+1} \, \cap \, I_{a,p}^{mn_0}\bJ(a,p)^{m\bn}\right) \right)}{m^dp^d}. 
	$$
	
	The next technical proposition is used in the proof of \autoref{thm_mult_eq_vol}.
	For its proof we use an argument quite similar to the one used in \cite[Proposition 4.3]{cutkosky2019}.
	
\begin{proposition}
		\label{prop_approx_Vol}
		Assume \autoref{setup_mixed_mult_general}.
		Then, for fixed $n_0 \in \NN$, $\bn \in \NN^r$ and $\varepsilon\in \RR_{>0}$,  there exists $a_0\in \NN$ such that if $a\gs a_0$ then 
		$$
		\Vol_{d}\left(\Delta(\Gamma_{n_0,\bn})\right) 
		\,\ge\,  
		\Vol_{d}\left(\Delta(\Gamma_{a,n_0,\bn})\right) 
		\,\ge\, 
		\Vol_{d}\left(\Delta(\Gamma_{n_0,\bn})\right) - \varepsilon
		$$
		and 
		$$
		\Vol_{d}\big(\Delta(\widehat{\Gamma}_{n_0,\bn})\big) 
		\,\ge\,  
		\Vol_{d}\left(\Delta(\widehat{\Gamma}_{a,n_0,\bn})\right) 
		\,\ge\, 
		\Vol_{d}\big(\Delta(\widehat{\Gamma}_{n_0,\bn})\big) - \varepsilon.
		$$
\end{proposition}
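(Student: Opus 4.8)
The plan is to deduce the statement from \autoref{prop_approx_Noeth_case} by a two-step approximation: first replace the (possibly non-Noetherian) families $\II,\JJ(1),\ldots,\JJ(r)$ by their truncations $\II_a,\JJ(1)_a,\ldots,\JJ(r)_a$, which are Noetherian; then apply the already-proven Noetherian case. Concretely, I would first show that the Newton--Okounkov bodies $\Delta(\Gamma_{a,n_0,\bn})$ exhaust $\Delta(\Gamma_{n_0,\bn})$ as $a\to\infty$, and similarly for the hatted versions, and then feed this into the Noetherian estimate.

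For the first step, fix $n_0,\bn$. Since $\II_a,\JJ(i)_a$ are obtained by truncating at level $a$, we have $I_{a,n}\subseteq I_n$ and $J(i)_{a,n}\subseteq J(i)_n$ for all $n$, so $\bJ_{a,m\bn}\subseteq \bJ_{m\bn}$ and hence $\Gamma_{a,n_0,\bn}\subseteq \Gamma_{n_0,\bn}$ and $\widehat\Gamma_{a,n_0,\bn}\subseteq \widehat\Gamma_{n_0,\bn}$; by monotonicity of integral volumes of Newton--Okounkov bodies this gives the upper bounds $\Vol_d(\Delta(\Gamma_{n_0,\bn}))\ge \Vol_d(\Delta(\Gamma_{a,n_0,\bn}))$ and likewise for $\widehat\Gamma$. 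For the lower bound, observe that the truncations increase with $a$ (i.e.\ $\Gamma_{a,n_0,\bn}\subseteq \Gamma_{a',n_0,\bn}$ for $a\le a'$) and that their union over all $a$ is $\Gamma_{n_0,\bn}$: any element $(\bm,m)\in\Gamma_{n_0,\bn}$ comes from a monomial $\bx^\bm\in\bJ_{m\bn}=J(1)_{mn_1}\cdots J(r)_{mn_r}$, which is a product of finitely many generators of the $\JJ(i)$, all lying in levels bounded by some $a$, so $\bx^\bm\in\bJ_{a,m\bn}$ for $a$ large. Since the cones $\Con(\Gamma_{a,n_0,\bn})$ then increase to a cone with the same closure as $\Con(\Gamma_{n_0,\bn})$ (all are subcones of the fixed simplex cone coming from the $|\bm|\le cm(n_0+|\bn|)$ constraint), the Newton--Okounkov bodies $\Delta(\Gamma_{a,n_0,\bn})$ increase to $\Delta(\Gamma_{n_0,\bn})$ up to measure zero, and continuity of volume under increasing unions of convex bodies gives $\Vol_d(\Delta(\Gamma_{a,n_0,\bn}))\to \Vol_d(\Delta(\Gamma_{n_0,\bn}))$. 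The same argument works verbatim for $\widehat\Gamma$, using that the $\bx^{\bm_i}\in I_{mn_0},J(i)_{mn_i}$ in the product defining an element of $\widehat\Gamma_{n_0,\bn}$ involve only finitely many generators. Choosing $a_0$ so that the defect is below $\varepsilon$ for $a\ge a_0$ completes this step.

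The only subtlety — and the step I expect to require care — is the exhaustion claim $\bigcup_a \Delta(\Gamma_{a,n_0,\bn})=\Delta(\Gamma_{n_0,\bn})$ up to a null set, since a priori passing from semigroups to their closed cones need not commute with increasing unions. The key point making this work is the uniform bound $|\bm|\le cm(n_0+|\bn|)$: all the semigroups $\Gamma_{a,n_0,\bn}$ live inside the fixed finitely generated semigroup $\{(\bm,m): |\bm|\le cm(n_0+|\bn|)\}$ whose cone is a rational simplicial cone, so the Newton--Okounkov bodies are all subpolytopes (after taking closures) of a fixed polytope, their union is dense in $\Delta(\Gamma_{n_0,\bn})$ (every rational point of the interior of $\Con(\Gamma_{n_0,\bn})\cap\pi^{-1}(1)$ is hit by some $\Gamma_{a,n_0,\bn}$ for $a\gg 0$), and density of a convex set inside another of the same dimension forces equality of volumes. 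I would also remark that this is precisely the mechanism used in \cite[Proposition 4.3]{cutkosky2019}, to which the argument is parallel, so the writeup can be kept brief by citing that argument for the technical density step.
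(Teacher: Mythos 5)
Your proposal is correct, and the truncation strategy (reduce to the Noetherian families $\II_a,\JJ(i)_a$ and then invoke \autoref{prop_approx_Noeth_case}) is exactly the role this proposition plays in the paper. However, your proof of the approximation itself takes a genuinely different route in its key technical step. The paper sandwiches the truncated semigroup between sumsets of the full one, namely
$m \star \left[\Gamma_{n_0,\bn}\right]_{a} \subset \left[\Gamma_{a,n_0,\bn}\right]_{ma} \subset \left[\Gamma_{n_0,\bn}\right]_{ma}$
(and likewise for $\widehat{\Gamma}$), and then applies the Lazarsfeld--Musta\c{t}\u{a} approximation theorem \cite[Proposition 3.1]{lazarsfeld09} together with \autoref{thm_limit_KK} to conclude that $\lim_m \#\big(m\star[\Gamma_{n_0,\bn}]_a\big)/(m^da^d)$ is within $\varepsilon$ of $\Vol_d(\Delta(\Gamma_{n_0,\bn}))$ for $a\gg 0$. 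You instead exploit the monotonicity of the truncations in $a$: since $\Gamma_{a,n_0,\bn}$ increases with $a$ and exhausts $\Gamma_{n_0,\bn}$ (any $(\bm,m)$ lies in the truncation once $a\ge m\max_i n_i$, say), the cones and hence the Okounkov bodies increase with closure of the union equal to $\Delta(\Gamma_{n_0,\bn})$, and monotone convergence of Lebesgue measure (plus the fact that closing a convex set only adds a null set) gives the volume convergence directly. Your route is more elementary and self-contained for this proposition precisely because monotonicity in $a$ is available here; the paper's route is uniform with the proof of \autoref{prop_approx_Noeth_case}, where the analogous semigroups are \emph{not} nested in $p$ and the sumset/Lazarsfeld--Musta\c{t}\u{a} machinery is genuinely needed. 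The one point to state carefully in a full writeup is the interchange you already flagged: $\Con\big(\bigcup_a \Gamma_{a,n_0,\bn}\big)=\overline{\bigcup_a \Con(\Gamma_{a,n_0,\bn})}$ and the passage to the slice $\pi^{-1}(1)$ (which behaves well here because the pairs are strongly admissible), together with the observation that \autoref{lem_props_Gamma_non_mm_prim} applies to the truncated families so all indices are $1$ and integral volume agrees with Euclidean volume throughout.
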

\begin{proof}
		For a positive integer $a > 0$, since $I_n = I_{a,n}$ and $J(i) = J(i)_{a,n}$ for all $n \le a$, for all $m \ge 1$ we obtain the following inclusions
		$$
		m \star \left[\Gamma_{n_0,\bn}\right]_{a} \, \subset \, \left[\Gamma_{a,n_0,\bn} \right]_{ma} \, \subset \, \left[\Gamma_{n_0,\bn}\right]_{ma}
		\;\; \text{ and } \;\;
		m \star \big[\widehat{\Gamma}_{n_0,\bn}\big]_{a} \, \subset \, \big[\widehat{\Gamma}_{a,n_0,\bn} \big]_{ma} \, \subset \, \big[\widehat{\Gamma}_{n_0,\bn}\big]_{ma}.
		$$
		Then, by \cite[Proposition 3.1]{lazarsfeld09} (also, see \cite[Theorem 3.3]{cutkosky2014}) and \autoref{thm_limit_KK}, for a fixed $\varepsilon\in \RR_{>0}$,  there exists $a_0\in \NN$ such that if $a\gs a_0$ then 
		\begin{equation*}
			\Vol_{d}\left(\Delta(\Gamma_{n_0,\bn})\right) 
			\ge  
			\Vol_{d}\left(\Delta(\Gamma_{a,n_0,\bn})\right) 
			\ge 
			\lim_{m\to \infty} \frac{\#\big(m \star \left[\Gamma_{n_0,\bn}\right]_{a}\big)}{m^da^d} 
			\ge 
			\Vol_{d}\left(\Delta(\Gamma_{n_0,\bn})\right) - \varepsilon
		\end{equation*}
		and
		\begin{equation*}
			\Vol_{d}\big(\Delta(\widehat{\Gamma}_{n_0,\bn})\big) 
			\ge  
			\Vol_{d}\left(\Delta(\widehat{\Gamma}_{a,n_0,\bn})\right) 
			\ge 
			\lim_{m\to \infty} \frac{\#\big(m \star \big[\widehat{\Gamma}_{n_0,\bn}\big]_{a}\big)}{m^da^d} 
			\ge 
			\Vol_{d}\big(\Delta(\widehat{\Gamma}_{n_0,\bn})\big) - \varepsilon.
		\end{equation*}
	So, the result follows.
\end{proof}

Finally, we are ready for our analog of the ``Volume = Multiplicity formula'' in the case of mixed multiplicities. 
The following theorem expresses the mixed multiplicities of graded families as  normalized limits of  mixed multiplicities of ideals. 

\begin{theorem}
	\label{thm_mult_eq_vol}
	Assume \autoref{setup_mixed_mult_general}.
	Then, for each $d_0 \in \NN$ and $\dd = (d_1,\ldots,d_r) \in \NN^r$ with $d_0+|\dd| = d-1$, we have the equality 
	$$
	e_{(d_0,\dd)}\left(\II|\JJ(1),\ldots,\JJ(r)\right) \;=\; \lim_{p\to \infty} \frac{e_{(d_0,\dd)}\left(I_p|J(1)_p,\ldots,J(r)_p\right)}{p^d}.
	$$
\end{theorem}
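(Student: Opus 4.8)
The plan is to deduce the formula from a pointwise ``Volume $=$ Multiplicity'' comparison. First I would observe that, for each fixed $p$, the families $\{I_p^n\}_n,\{J(1)_p^n\}_n,\ldots,\{J(r)_p^n\}_n$ satisfy \autoref{setup_mixed_mult_general} with the very same constant $c$ as in \autoref{lem_eq_filt} (indeed $I_p\supseteq \mm^{c'p}$ and $\beta(J(i)_p^n)\le \beta pn$), so that repeating the argument of \autoref{lem_eq_filt} and \autoref{eq_diff_gen_case} verbatim gives
$$
\dim_{\kk}\big(\bJ(p)^{m\bn}/I_p^{mn_0}\bJ(p)^{m\bn}\big)=\#\big[\Gamma_{n_0,\bn}(p)\big]_{mp}-\#\big[\widehat{\Gamma}_{n_0,\bn}(p)\big]_{mp},
$$
in the notation introduced inside the proof of \autoref{prop_approx_Noeth_case}. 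Dividing by $m^dp^d$ and letting $m\to\infty$ yields $p^{-d}F_p(n_0,\bn)=H_p(n_0,\bn)-\widehat{H}_p(n_0,\bn)$, where $F_p(n_0,\bn):=\lim_{m\to\infty}\dim_{\kk}(\bJ(p)^{m\bn}/I_p^{mn_0}\bJ(p)^{m\bn})/m^d$ and $H_p,\widehat{H}_p$ are the functions of \autoref{Hp}, \autoref{Hphat}.

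Next I would use \autoref{lem_eq_mixed_mult_two_defs} applied to $I_p,J(1)_p,\ldots,J(r)_p$ to identify $F_p$ with the homogeneous degree-$d$ polynomial whose $n_0^{d_0+1}\bn^{\dd}$-coefficient is $e_{(d_0,\dd)}(I_p\mid J(1)_p,\ldots,J(r)_p)/((d_0+1)!\dd!)$, while by \autoref{thm_general_case} and \autoref{def_mix_mult_general} the polynomial $G$ is the homogeneous degree-$d$ polynomial whose $n_0^{d_0+1}\bn^{\dd}$-coefficient is $e_{(d_0,\dd)}(\II\mid\JJ(1),\ldots,\JJ(r))/((d_0+1)!\dd!)$; moreover $G=F$ on $\NN^{r+1}$ and, by \autoref{eq_F_as_vols_general}, $F(n_0,\bn)=\Vol_d(\Delta(\Gamma_{n_0,\bn}))-\Vol_d(\Delta(\widehat{\Gamma}_{n_0,\bn}))$. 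Since the space of homogeneous degree-$d$ polynomials in $r+1$ variables is finite dimensional and evaluation on a suitable finite subset of $\NN^{r+1}$ is injective on it, pointwise convergence on $\NN^{r+1}$ forces convergence of all coefficients; hence the theorem reduces to proving, for every $(n_0,\bn)\in\NN^{r+1}$, the limits $\lim_{p\to\infty}H_p(n_0,\bn)=\Vol_d(\Delta(\Gamma_{n_0,\bn}))$ and $\lim_{p\to\infty}\widehat{H}_p(n_0,\bn)=\Vol_d(\Delta(\widehat{\Gamma}_{n_0,\bn}))$.

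For the upper bounds I would use that $\bx^\bm\in\bJ(p)^{m\bn}$ implies $\bx^\bm\in\bJ_{pm\bn}$ because $\JJ(1),\ldots,\JJ(r)$ are graded families, giving $[\Gamma_{n_0,\bn}(p)]_{mp}\subseteq[\Gamma_{n_0,\bn}]_{mp}$ and $[\widehat{\Gamma}_{n_0,\bn}(p)]_{mp}\subseteq[\widehat{\Gamma}_{n_0,\bn}]_{mp}$; dividing by $(mp)^d$, letting $m\to\infty$ and applying \autoref{thm_limit_KK} (with $\ind=1$, so the limit along the subsequence $k=mp$ equals the full limit) gives $H_p(n_0,\bn)\le\Vol_d(\Delta(\Gamma_{n_0,\bn}))$ and $\widehat{H}_p(n_0,\bn)\le\Vol_d(\Delta(\widehat{\Gamma}_{n_0,\bn}))$ for all $p$. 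For the matching lower bounds I would chain the two approximation results at hand. Fix $(n_0,\bn)$ and $\varepsilon>0$; by \autoref{prop_approx_Vol} pick $a$ with $\Vol_d(\Delta(\Gamma_{a,n_0,\bn}))\ge\Vol_d(\Delta(\Gamma_{n_0,\bn}))-\varepsilon/2$ and $\Vol_d(\Delta(\widehat{\Gamma}_{a,n_0,\bn}))\ge\Vol_d(\Delta(\widehat{\Gamma}_{n_0,\bn}))-\varepsilon/2$. The truncated families $\II_a,\JJ(1)_a,\ldots,\JJ(r)_a$ of \autoref{notation_truncations} are Noetherian and again satisfy \autoref{setup_mixed_mult_general} with the same $\beta$ and the same $c$, so \autoref{prop_approx_Noeth_case} applied to them produces $p_0$ with $H_{a,p}(n_0,\bn)\ge\Vol_d(\Delta(\Gamma_{a,n_0,\bn}))-\varepsilon/2$ and $\widehat{H}_{a,p}(n_0,\bn)\ge\Vol_d(\Delta(\widehat{\Gamma}_{a,n_0,\bn}))-\varepsilon/2$ for $p\ge p_0$. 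Finally, since $J(i)_{a,p}\subseteq J(i)_p$ and $I_{a,p}\subseteq I_p$, one has $\bJ(a,p)^{m\bn}\subseteq\bJ(p)^{m\bn}$ and $I_{a,p}^{mn_0}\bJ(a,p)^{m\bn}\subseteq I_p^{mn_0}\bJ(p)^{m\bn}$, and reducing modulo $\mm^{cpm(n_0+|\bn|)+1}$ gives $H_{a,p}(n_0,\bn)\le H_p(n_0,\bn)$ and $\widehat{H}_{a,p}(n_0,\bn)\le\widehat{H}_p(n_0,\bn)$; hence $H_p(n_0,\bn)\ge\Vol_d(\Delta(\Gamma_{n_0,\bn}))-\varepsilon$ and likewise for $\widehat{H}_p$ whenever $p\ge p_0$. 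Letting $\varepsilon\to0$ and combining with the upper bounds gives the two limits, and the theorem follows.

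I expect the main difficulty to be the lower bound, where one must juggle two parameters: the truncation level $a$ (used to replace the possibly non-Noetherian families by Noetherian ones whose Newton--Okounkov bodies are $\varepsilon$-close, \autoref{prop_approx_Vol}) and the power $p$ (used inside the Noetherian ``Volume $=$ Multiplicity'' estimate \autoref{prop_approx_Noeth_case}, itself built on \cite[Proposition 3.1]{lazarsfeld09} and \autoref{thm_limit_KK}), and then transfer the estimate back from the truncations to the original families through the monotonicity $H_{a,p}\le H_p$. One also has to verify that a single constant $c$ serves uniformly for all truncations and all powers, so that the semigroups $\Gamma_{n_0,\bn}(p)$, $\widehat{\Gamma}_{n_0,\bn}(p)$, $\Gamma_{a,n_0,\bn}$, $\widehat{\Gamma}_{a,n_0,\bn}$ remain directly comparable; the passage from convergence of the functions $p^{-d}F_p$ to convergence of the mixed multiplicities themselves is then routine given \autoref{lem_eq_mixed_mult_two_defs}.
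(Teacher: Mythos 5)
Your proposal is correct and follows essentially the same route as the paper: identify $p^{-d}F_p=H_p-\widehat{H}_p$, use \autoref{lem_eq_mixed_mult_two_defs} to read off the mixed multiplicities from $F_p$, prove $\lim_p H_p=\Vol_d(\Delta(\Gamma_{n_0,\bn}))$ (and likewise for $\widehat{H}_p$) by chaining \autoref{prop_approx_Vol} with \autoref{prop_approx_Noeth_case} applied to the Noetherian truncations and the monotonicity $H_{a,p}\le H_p$, and then pass from pointwise convergence of the homogeneous polynomials to convergence of their coefficients. The only cosmetic difference is that you make the upper bound $H_p\le\Vol_d(\Delta(\Gamma_{n_0,\bn}))$ explicit via the inclusion $[\Gamma_{n_0,\bn}(p)]_{mp}\subseteq[\Gamma_{n_0,\bn}]_{mp}$, which the paper leaves implicit.
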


\begin{proof}
	Let $p \ge 1$ and consider the filtrations $\II(p) = \{I_p^n\}_{n \in \NN}, \JJ(1)(p) = \{J(1)_p^n\}_{n \in \NN}, \ldots, \JJ(r)(p) = \{J(r)_p^n\}_{n \in \NN}$.
	By applying \autoref{thm_general_case} to the filtrations $$\II(p) = \{I_p^n\}_{n \in \NN},\, \JJ(1)(p) = \{J(1)_p^n\}_{n \in \NN},\, \ldots\,, \,\JJ(r)(p) = \{J(r)_p^n\}_{n \in \NN},$$ let $F_p(n_0,\bn)$ be the function 
	\begin{equation}
	\label{eq_def_F_p}
	F_p(n_0,\bn) = \lim_{m\to \infty} \frac{\dim_{\kk}\left(\bJ(p)^{m\bn}/I_p^{mn_0}\bJ(p)^{m\bn}\right) }{m^d}
	\end{equation}	
	and $G_p(n_0,\bn)$ be the polynomial of total degree $d$ that coincides with $F_p(n_0,\bn)$.
	From \autoref{lem_eq_mixed_mult_two_defs} we can write 
	\begin{equation}
	\label{eq_G_p}
	G_p(n_0,\bn) = \sum_{d_0+|\dd| = d-1} \frac{1}{(d_0+1)!\dd!}\, e_{(d_0,\dd)}\left(I_p|J(1)_p,\ldots,J(r)_p\right)\, n_0^{d_0+1}\bn^\dd.
	\end{equation}	
	Notice that we have the equation
	\begin{equation}
		\label{eq_F_expressed_as_H}
		\frac{1}{p^d}F_p(n_0,\bn) = H_p(n_0,\bn)  - \widehat{H}_p(n_0,\bn); 
	\end{equation}
see \autoref{Hp} and \autoref{Hphat}. 

Fix $\varepsilon > 0$  a positive real number.
	By using \autoref{prop_approx_Vol}, choose $a \gg 0$ such that 
	\begin{equation}
		\label{eq_ineq_ee_2_Vol_1}
		\Vol_{d}\left(\Delta(\Gamma_{n_0,\bn})\right) 
		\,\ge\,  
		\Vol_{d}\left(\Delta(\Gamma_{a,n_0,\bn})\right) 
		\,\ge\, 
		\Vol_{d}\left(\Delta(\Gamma_{n_0,\bn})\right) - \varepsilon/2
	\end{equation}
	and 
	\begin{equation}
		\label{eq_ineq_ee_2_Vol_2}
		\Vol_{d}\big(\Delta(\widehat{\Gamma}_{n_0,\bn})\big) 
		\,\ge\,  
		\Vol_{d}\left(\Delta(\widehat{\Gamma}_{a,n_0,\bn})\right) 
		\,\ge\, 
		\Vol_{d}\big(\Delta(\widehat{\Gamma}_{a,n_0,\bn})\big) - \varepsilon/2.
	\end{equation}
	From \autoref{prop_approx_Noeth_case}, applied to the Noetherian graded families $\II_a, \JJ(1)_a,\ldots,\JJ(r)_a$, choose $p \gg 0$ such that 
	\begin{equation}
		\label{eq_ineq_ee_2_Vol_3}
		\Vol_{d}\left(\Delta(\Gamma_{a,n_0,\bn})\right) 
		\,\ge\,  
		H_{a,p}(n_0,\bn) \,\ge\, \Vol_{d}\left(\Delta(\Gamma_{a,n_0,\bn})\right)  - \varepsilon/2 
	\end{equation}
 	and 	
 	\begin{equation}
 		\label{eq_ineq_ee_2_Vol_4}
 		\Vol_{d}\big(\Delta(\widehat{\Gamma}_{a,n_0,\bn})\big) 
 		\,\ge\,  
 		\widehat{H}_{a,p}(n_0,\bn) \,\ge\, \Vol_{d}\big(\Delta(\widehat{\Gamma}_{a,n_0,\bn})\big) - \varepsilon/2.
 	\end{equation}
	Since $I_{a,n} \subset I_n$ and $J(i)_{a,n} \subset J(i)_n$ for all $n \in \NN$,  one has $H_p(n_0,\bn) \ge H_{a,p}(n_0,\bn)$ and $\widehat{H}_p(n_0,\bn) \ge \widehat{H}_{a,p}(n_0,\bn)$, and so from \autoref{eq_ineq_ee_2_Vol_1}, \autoref{eq_ineq_ee_2_Vol_2}, \autoref{eq_ineq_ee_2_Vol_3} and \autoref{eq_ineq_ee_2_Vol_4} one obtains 
	\begin{equation}
		\label{eq_ineq_Vol_H1}
		\Vol_{d}\left(\Delta(\Gamma_{n_0,\bn})\right) 
		\,\ge\,  
		H_p(n_0,\bn) \,\ge\, H_{a,p}(n_0,\bn)
		\,\ge\, 
		\Vol_{d}\left(\Delta(\Gamma_{n_0,\bn})\right) - \varepsilon
	\end{equation}
	and 
	\begin{equation}
		\label{eq_ineq_Vol_H2}
		\Vol_{d}\big(\Delta(\widehat{\Gamma}_{n_0,\bn})\big) 
		\,\ge\,  
		\widehat{H}_p(n_0,\bn) \,\ge\, \widehat{H}_{a,p}(n_0,\bn) 
		\,\ge\, 
		\Vol_{d}\big(\Delta(\widehat{\Gamma}_{n_0,\bn})\big) - \varepsilon.
	\end{equation}

	Therefore, by combining \autoref{eq_F_as_vols_general}, \autoref{eq_F_expressed_as_H}, \autoref{eq_ineq_Vol_H1} and \autoref{eq_ineq_Vol_H2} we obtain the equalities
	\begin{align*}
		F(n_0,\bn) &= \Vol_{d}\left(\Delta(\Gamma_{n_0,\bn})\right) - \Vol_{d}\left(\Delta(\widehat{\Gamma}_{n_0,\bn})\right) \\ 
		&= \lim_{p\to \infty} \left(H_p(n_0,\bn)  - \widehat{H}_p(n_0,\bn)
		\right)	
		=
		\lim_{p\to \infty} \frac{1}{p^d} F_p(n_0,\bn)
	\end{align*}
	for all $n_0$ and $\bn \in \NN^r$.
	Accordingly, it necessarily follows that the coefficients of the polynomials $\frac{1}{p^d}G_p(n_0,\bn)$ converge to the ones of the polynomial $G(n_0,\bn)$ (see, e.g., \cite[Lemma 3.2]{cutkosky2019}).
	Therefore, by \autoref{def_mix_mult_general} and \autoref{eq_G_p} we obtain  
	$$
	e_{(d_0,\dd)}\left(\II|\JJ(1),\ldots,\JJ(r)\right) \;=\; \lim_{p\to \infty} \frac{e_{(d_0,\dd)}\left(I_p|J(1)_p,\ldots,J(r)_p\right)}{p^d},
	$$
	and so the result follows.
\end{proof}

\section{Mixed volumes of Convex Bodies as Mixed Multiplicities}
\label{sec_mixed_vol_mixed_mult}

This section includes the proof  \autoref{thmC}, which is the main result of this paper (see \autoref{mainCor}). In this result, we describe the mixed volumes of (arbitrary) convex bodies as the mixed multiplicities of certain (not necessarily Noetherian) graded families of ideals,  and as the normalized limits of the mixed multiplicities of  certain monomial ideals.

Throughout this section we fix the following setup.

\begin{setup}
	\label{setup_mixed_vol}
	Let $\kk$ be a field, $R$ be the standard graded polynomial ring $R = \kk[x_1,\ldots,x_{d+1}]$, and $\mm \subset R$ be the graded irrelevant ideal $\mm=\left(x_1,\ldots,x_{d+1}\right)$.
	Let 
	$
	\pi_1 : \RR^{d+1} \rightarrow \RR^{d} , \, (\alpha_1,\ldots,\alpha_d,\alpha_{d+1}) \mapsto (\alpha_1,\ldots,\alpha_d)
	$ be the projection into the first $d$ factors. 
	Let $\pi : \RR^{d+1} \rightarrow \RR$ the linear map 
	$
	\pi : \RR^{d+1} \rightarrow \RR, \, (\alpha_1,\ldots,\alpha_d,\alpha_{d+1}) \mapsto \alpha_1+\cdots+\alpha_d+\alpha_{d+1}.
	$	
	Let $(K_1,\ldots,K_r)$ be a sequence of convex bodies in $\RR_{\ge 0}^d$.
\end{setup}

The notation below introduces a process that we call the homogenization of a convex body.

\begin{notation}
	Let $K \subset \RR_{\ge 0}^d$ be a convex body in $\RR_{\ge 0}^d$.
	Choose $h_K \in \NN$ a positive integer such that $K \subset \pi_1\big(\pi^{-1}(h_K) \cap \RR_{\ge 0}^{d+1}\big)$.
	We call $h_K$ a \emph{suitable degree of homogenization of $K$}.
	The corresponding \emph{homogenization} of $K$ (with respect to $h_{K}$) is defined as the convex body 
	$$
	\widetilde{K} \;:=\; \left(K \times \RR\right) \cap \pi^{-1}(h_K) \; \subset \; \RR_{\ge 0}^{d+1}.
	$$
	Let $C_K$ be the corresponding cone $C_K := \text{Cone}(\widetilde{K})$.
	Consider the semigroup $S_K \subset \NN^{d+1}$ given by
	$$
	S_K:= C_K \cap \NN^{d+1} \cap \left(\bigcup_{k=1}^\infty \pi^{-1}\left(kh_K\right)\right).
	$$
\end{notation}

For each $1 \le i \le r$, let $h_{K_i}$ be a suitable degree of homogenization for $K_i$ and $S_{K_i}$ be the corresponding semigroup in $\NN^{d+1}$ that is determined by the homogenization $\widetilde{K_i} \subset \RR_{\ge 0}^{d+1}$.
	For each $1 \le i \le r$, we consider the (not necessarily Noetherian) graded family of monomial ideals
\begin{equation}
	\label{eq_def_grad_families}
		\JJ(i) = \{J(i)_n\}_{n \in \NN},  \quad\text{where }\quad J(i)_n = \Big(\bx^\bm \mid \bm \in S_{K_i} \text{ and } \lvert \bm \rvert = nh_{K_i}\Big) \subset R.  
\end{equation}
	Let $K_0 \subset \RR^{d}$ be the convex hull of the points $\mathbf{0}, \ee_1,\ldots,\ee_{d} \in \RR^{d}$ and $\widetilde{K_0}$ be its homogenization $\widetilde{K_0} = \lbrace \bx \in \RR_{\ge 0}^{d+1} \mid \lvert \bx \rvert = 1 \rbrace \subset \RR^{d+1}$; notice that $h_{K_0}=1$ is a suitable degree of homogenization for $K_0$. We let $\mathbb{M}$ be the graded family $\mathbb{M}=\{\mm^n\}_{n\in \NN}$.
 Denote by $\bK$ the sequence of convex bodies $\bK = (K_0,K_1,\ldots,K_r)$.

Let $p > 0$ be a positive integer. 
For each $1 \le i \le r$, let $K_i(p)$ be the lattice polytope given by
$$
K_i(p) \;:=\; \pi_1\left( \text{conv}\left( \Big\lbrace 
\bm  \in \NN^{d+1} \mid \bx^\bm \in  \left[J(i)_{p}\right]_{ph_{K_i}} 
\Big\rbrace\right) \right),
$$
$\text{conv}(-)$ denotes the convex hull of a finite set of points; the polytope defined above corresponds with the generators of the ideal $J(i)_p$.
Denote by $\bK(p)$ the sequence of lattice polytopes $\bK(p) := (K_0, K_1(p), \ldots, K_r(p))$.
The next lemma shows that the mixed volumes of $\bK$ can be approximated with the ones of $\bK(p)$.

\begin{lemma}
	\label{lemma_aprox_MV}
	For each $(d_0,\dd) \in \NN^{r+1}$ with $d_0+\lvert \dd \rvert = d$, we have the equality 
	$$
	\MV_d\left(\bK_{(d_0,\dd)}\right)  \;=\; \lim_{p\to \infty} \frac{\MV_d\left(\bK(p)_{(d_0,\dd)}\right)}{p^{|\dd|}}.
	$$
\end{lemma}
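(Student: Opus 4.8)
The plan is to reduce the statement to a lattice‑point approximation fact for mixed volumes; throughout, fix $(d_0,\dd)$ with $d_0+\lvert\dd\rvert=d$.

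\emph{Step 1: identify $K_i(p)$.} First I would unwind the definitions to show that $K_i(p)=\conv\big(pK_i\cap\ZZ^d\big)$ for $1\le i\le r$ (while $K_0(p)=K_0$ by construction). Since every generator of $J(i)_p$ has degree $ph_{K_i}$, the graded piece $\big[J(i)_p\big]_{ph_{K_i}}$ is spanned precisely by the monomials $\bx^\bm$ with $\bm\in S_{K_i}$ and $\lvert\bm\rvert=ph_{K_i}$. Because $C_{K_i}$ is the cone over $\widetilde{K_i}$ and $\pi$ takes the constant value $h_{K_i}$ on $\widetilde{K_i}$, one has $C_{K_i}\cap\pi^{-1}(ph_{K_i})=p\widetilde{K_i}$, so $S_{K_i}\cap\pi^{-1}(ph_{K_i})=p\widetilde{K_i}\cap\NN^{d+1}=p\widetilde{K_i}\cap\ZZ^{d+1}$, the last equality because $\widetilde{K_i}\subset\RR_{\ge 0}^{d+1}$. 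The affine map $x\mapsto(x,\,ph_{K_i}-\lvert x\rvert)$ carries $pK_i$ bijectively onto $p\widetilde{K_i}$, restricts to a bijection $pK_i\cap\ZZ^d\leftrightarrow p\widetilde{K_i}\cap\ZZ^{d+1}$, and has $\pi_1$ as its inverse; since $\pi_1$ is linear it commutes with $\conv(-)$, whence $K_i(p)=\pi_1\big(\conv(p\widetilde{K_i}\cap\ZZ^{d+1})\big)=\conv\big(pK_i\cap\ZZ^d\big)$.

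\emph{Step 2: Hausdorff convergence of the rescaled polytopes.} Next I would prove $\tfrac1p K_i(p)\to K_i$ in the Hausdorff metric as $p\to\infty$ for every $0\le i\le r$; this is immediate for $i=0$. For $1\le i\le r$, using that $K_i$ has nonempty interior, fix a small $\varepsilon>0$ and let $K_i^{-\varepsilon}:=\{x\mid \overline{B}(x,\varepsilon)\subset K_i\}=\bigcap_{\lVert u\rVert\le\varepsilon}(K_i-u)$ be the inner parallel body: it is a nonempty convex body with $d_H(K_i^{-\varepsilon},K_i)\to 0$ as $\varepsilon\downarrow 0$. If $p\ge\sqrt d/\varepsilon$, then for any $y\in pK_i^{-\varepsilon}$ the closed unit cube $w+[0,1]^d$ with $w\in\ZZ^d$ containing $y$ has all of its $2^d$ vertices at Euclidean distance $\le\sqrt d\le p\varepsilon$ from $y$, hence inside $pK_i$; thus $y\in\conv(pK_i\cap\ZZ^d)$. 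This gives $K_i^{-\varepsilon}\subset\tfrac1p K_i(p)\subset K_i$ whenever $p\ge\sqrt d/\varepsilon$, and letting $\varepsilon\downarrow 0$ yields $\tfrac1p K_i(p)\to K_i$.

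\emph{Step 3: conclude.} By positive homogeneity of the mixed volume in each argument (\cite[Chapter IV]{EWALD}, \cite[\S 5.1]{SCHNEIDER}), writing $K_i(p)=p\cdot\big(\tfrac1p K_i(p)\big)$ and pulling the scalars out of the $d_1+\cdots+d_r=\lvert\dd\rvert$ slots occupied by the $K_i(p)$ with $i\ge 1$, one obtains
$$
\MV_d\big(\bK(p)_{(d_0,\dd)}\big)=p^{\lvert\dd\rvert}\,\MV_d\Big(\textstyle\bigcup_{j=1}^{d_0}\{K_0\}\,\cup\,\bigcup_{i=1}^{r}\bigcup_{j=1}^{d_i}\{\tfrac1p K_i(p)\}\Big).
$$
By Step 2 and the continuity of the mixed volume with respect to the Hausdorff metric (\cite[\S 5.1]{SCHNEIDER}), the mixed volume on the right converges to $\MV_d\big(\bK_{(d_0,\dd)}\big)$ as $p\to\infty$; dividing by $p^{\lvert\dd\rvert}$ gives the claim. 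The substantive point is Step 2, and it is precisely there that full‑dimensionality of the $K_i$ is used: for a lower‑dimensional body with an ``irrational'' affine hull the set $pK_i\cap\ZZ^d$ can collapse to a single point, so $\tfrac1p K_i(p)$ need not approach $K_i$. The remaining ingredients — the bookkeeping in Step 1 and the continuity and multilinearity of mixed volumes — are routine or standard.
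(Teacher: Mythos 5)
Your argument follows exactly the route the paper takes: identify $\tfrac1p K_i(p)$ as $\conv(pK_i\cap\ZZ^d)$, establish Hausdorff convergence $\tfrac1p K_i(p)\to K_i$, and conclude via the continuity and multilinearity of mixed volumes (the paper cites Ewald, Lemmas 3.8 and 3.6, where you prove the convergence by hand with inner parallel bodies). Your closing remark is apt: the paper asserts the Hausdorff convergence ``by construction'' without comment, and your cube argument makes explicit that full-dimensionality of the $K_i$ is the hypothesis that makes it work.
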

\begin{proof}
		By construction we have that $\frac{1}{p}K_i(p)$ converges to $K_i$ in the Hausdorff metric (see \cite[Definition 2.1, page 109]{EWALD}) when $p \rightarrow \infty$.
		  Thus, \cite[Lemma 3.8, page 119]{EWALD} yields that 
		  $$
		  \MV_d\left({\left(K_0, \frac{1}{p}K_1(p),\ldots,\frac{1}{p}K_r(p)\right)}_{(d_0,\dd)}\right)
		  $$ converges to $\MV_d\left(\bK_{(d_0,\dd)}\right)$ when $p \rightarrow \infty$.
		  Therefore, the result follows from the linearity of mixed volumes (see, e.g., \cite[Lemma 3.6, page 118]{EWALD}).
\end{proof}

Finally, the next theorem expresses the mixed volume of the convex bodies $K_1,\ldots,K_r$ as a mixed multiplicity of the chosen graded families $\JJ(1), \ldots, \JJ(r)$.
Additionally, we also express the mixed volume of the convex bodies $K_1,\ldots,K_r$ as two types of normalized limits of mixed multiplicities of ideals.
This result can be seen as an extension of \cite[Theorem 2.4, Corollary 2.5]{TRUNG_VERMA_MIXED_VOL}. 

\begin{theorem}\label{mainCor}
	Assume \autoref{setup_mixed_vol}.
	Let $\JJ(1),\ldots,\JJ(r)$ be the graded families of monomial ideals defined in \autoref{eq_def_grad_families}.
	For each $(d_0,\dd) \in \NN^{r+1}$ with $d_0+\lvert \dd \rvert = d$, we have the equalities 
	\begin{align*}
		\MV_d\left(\bK_{ (d_0,\dd)}\right) &=  e_{(d_0,\dd)}(\mathbb{M} \mid \JJ(1),\ldots,\JJ(r)) \\
		&= \lim_{p\to \infty} \frac{e_{(d_0,\dd)}(\mm^p \mid J(1)_p,\ldots,J(r)_p)}{p^{d+1}} \;=\; \lim_{p\to \infty} \frac{e_{(d_0,\dd)}(\mm \mid J(1)_p,\ldots,J(r)_p)}{p^{|\dd|}}.
	\end{align*}
	In particular, when $r = d$, we obtain the equalities
	\begin{align*}
		\MV_d(K_1,\ldots,K_d) &=  e_{(0,1,\ldots,1)}(\mathbb{M} \mid \JJ(1),\ldots,\JJ(d)) \\
		&= \lim_{p\to \infty} \frac{e_{(0,1,\ldots,1)}(\mm^p \mid J(1)_p,\ldots,J(d)_p)}{p^{d+1}} \;=\; \lim_{p\to \infty} \frac{e_{(0,1,\ldots,1)}(\mm \mid J(1)_p,\ldots,J(d)_p)}{p^{d}}.
	\end{align*}
\end{theorem}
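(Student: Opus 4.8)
The plan is to obtain the three displayed equalities by chaining together the ``Volume $=$ Multiplicity'' formula of \autoref{thm_mult_eq_vol}, the polytopal approximation of \autoref{lemma_aprox_MV}, and the Trung--Verma correspondence between mixed volumes of lattice polytopes and mixed multiplicities of monomial ideals. Write $D = \dim R = d+1$. First I would verify that \autoref{setup_mixed_mult_general} applies to the data $\II = \mathbb{M} = \{\mm^n\}_{n \in \NN}$ together with the graded families $\JJ(1),\ldots,\JJ(r)$ of \autoref{eq_def_grad_families}: indeed $\mathbb{M}$ is a graded family of $\mm$-primary monomial ideals, each $\JJ(i)$ is a graded family of monomial ideals, and $\beta(J(i)_n) \le n\,h_{K_i}$ by construction, so the required linear bound holds with $\beta = \max_{1 \le i \le r} h_{K_i}$. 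Hence the mixed multiplicities $e_{(d_0,\dd)}(\mathbb{M} \mid \JJ(1),\ldots,\JJ(r))$ of \autoref{def_mix_mult_general} are defined for every $(d_0,\dd)$ with $d_0 + |\dd| = D-1 = d$, and \autoref{thm_mult_eq_vol} immediately yields the second equality $e_{(d_0,\dd)}(\mathbb{M}\mid\JJ(1),\ldots,\JJ(r)) = \lim_{p\to\infty} p^{-(d+1)}\, e_{(d_0,\dd)}(\mm^p\mid J(1)_p,\ldots,J(r)_p)$.

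Next I would record the elementary scaling identity $e_{(d_0,\dd)}(\mm^p \mid J_1,\ldots,J_r) = p^{\,d_0+1}\, e_{(d_0,\dd)}(\mm\mid J_1,\ldots,J_r)$ for monomial ideals $J_1,\ldots,J_r$. Using the description in \autoref{lem_eq_mixed_mult_two_defs}, the function attached to the filtrations $\{\mm^{pn}\}, \{J_1^n\},\ldots,\{J_r^n\}$ equals $(n_0,\bn)\mapsto F_{\mm}(pn_0,\bn)$, so the representing polynomials satisfy $G_{\mm^p}(n_0,\bn) = G_{\mm}(pn_0,\bn)$, and comparing the coefficient of $n_0^{d_0+1}\bn^\dd$ gives the identity (it can alternatively be quoted from \cite{TRUNG_VERMA_MIXED_VOL}). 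Since $d_0 + |\dd| = d$, this turns the second equality into the third: $\lim_{p\to\infty} p^{-(d+1)}\, e_{(d_0,\dd)}(\mm^p\mid J(1)_p,\ldots,J(r)_p) = \lim_{p\to\infty} p^{-(d-d_0)}\, e_{(d_0,\dd)}(\mm\mid J(1)_p,\ldots,J(r)_p) = \lim_{p\to\infty} p^{-|\dd|}\, e_{(d_0,\dd)}(\mm\mid J(1)_p,\ldots,J(r)_p)$.

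It then remains to prove $\MV_d(\bK_{(d_0,\dd)}) = e_{(d_0,\dd)}(\mathbb{M}\mid\JJ(1),\ldots,\JJ(r))$, which is the heart of the argument. By construction $J(i)_p$ is the polytopal monomial ideal generated by the monomials $\bx^\bm$ with $\bm$ a lattice point of the homogenization of $K_i(p)$ at degree $p\,h_{K_i}$, while $K_0$ is the standard simplex whose homogenization corresponds to the ideal $\mm$ itself; thus the Trung--Verma correspondence (\cite[Theorem 2.4, Corollary 2.5]{TRUNG_VERMA_MIXED_VOL}) applied to the sequence of lattice polytopes $\bK(p) = (K_0, K_1(p),\ldots,K_r(p))$ gives $\MV_d(\bK(p)_{(d_0,\dd)}) = e_{(d_0,\dd)}(\mm\mid J(1)_p,\ldots,J(r)_p)$ for all $p \ge 1$ and all $(d_0,\dd)$ with $d_0 + |\dd| = d$. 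Dividing by $p^{|\dd|}$, letting $p \to \infty$, and invoking \autoref{lemma_aprox_MV} on the left-hand side, I get $\MV_d(\bK_{(d_0,\dd)}) = \lim_{p\to\infty} p^{-|\dd|}\MV_d(\bK(p)_{(d_0,\dd)}) = \lim_{p\to\infty} p^{-|\dd|}\, e_{(d_0,\dd)}(\mm\mid J(1)_p,\ldots,J(r)_p)$, and the right-hand side is $e_{(d_0,\dd)}(\mathbb{M}\mid\JJ(1),\ldots,\JJ(r))$ by the two preceding paragraphs. This proves all three equalities at once; the ``in particular'' statement follows by taking $d_0 = 0$ and $\dd = \mathbf{1}$, since then $\bK_{(0,\mathbf{1})}$ is the multiset $\{K_1,\ldots,K_d\}$ and so $\MV_d(\bK_{(0,\mathbf{1})}) = \MV_d(K_1,\ldots,K_d)$.

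The step I expect to be most delicate is pinning down the conventions in the Trung--Verma correspondence: one must check that $J(i)_p$ from \autoref{eq_def_grad_families} really is the monomial ideal that Trung--Verma attach to the lattice polytope $K_i(p)$ (including the choice of homogenization degree), that $K_0$ indeed plays the role of $\mm$ in their set-up, and --- most importantly --- that the bookkeeping of the index $(d_0,\dd)$ is compatible with \autoref{def_mix_mult_general}, in particular the shift between ``$d_0$ copies of $K_0$'' among $d$ bodies in $\RR^d$ and the exponent $d_0+1$ of $n_0$ in the $(d+1)$-dimensional polynomial ring. A minor secondary point is that all the limits above are known to exist, which is guaranteed by \autoref{thm_general_case} and \autoref{lemma_aprox_MV}.
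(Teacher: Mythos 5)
Your proposal is correct and follows essentially the same route as the paper's proof: you invoke the Trung--Verma correspondence to identify $\MV_d(\bK(p)_{(d_0,\dd)})$ with $e_{(d_0,\dd)}(\mm\mid J(1)_p,\ldots,J(r)_p)$, pass to the limit via \autoref{lemma_aprox_MV}, derive the scaling identity $e_{(d_0,\dd)}(\mm^p\mid\ldots)=p^{d_0+1}e_{(d_0,\dd)}(\mm\mid\ldots)$ by comparing the coefficients of the two polynomials from \autoref{thm_general_case} together with \autoref{lem_eq_mixed_mult_two_defs}, and finish by invoking \autoref{thm_mult_eq_vol}. The only difference is cosmetic --- you start from \autoref{thm_mult_eq_vol} and work toward the mixed volume, while the paper proceeds in the opposite order --- and your explicit verification of the hypothesis $\beta(J(i)_n)\le n h_{K_i}$ is a welcome detail that the paper leaves implicit.
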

\begin{proof}
	By applying \cite[Theorem 2.4, Corollary 2.5]{TRUNG_VERMA_MIXED_VOL} to the generators of the monomial ideals $J(1)_p,\ldots,J(r)_p$, we obtain the equality $\MV_d\left(\bK(p)_{(d_0,\dd)}\right)=e_{(d_0,\dd)}(\mm \mid J(1)_p,\ldots,J(r)_p)$.
	Thus, \autoref{lemma_aprox_MV} yields the equality 
	$$
	\MV_d\left(\bK_{ (d_0,\dd)}\right) = \lim_{p\to \infty} \frac{e_{(d_0,\dd)}(\mm \mid J(1)_p,\ldots,J(r)_p)}{p^{|\dd|}}.
	$$
	
	From \autoref{thm_general_case}, let $G_1(n_0,\bn)$ and $G_2(n_0,\bn)$ be the polynomials given by the functions
	$$
	G_1(n_0,\bn) = \lim_{m\rightarrow \infty}\frac{\dim_\kk \left( J(1)_p^{mn_1}\cdots J(r)_p^{mn_r}/\mm^{mn_0}J(1)_p^{mn_1}\cdots J(r)_p^{mn_r} \right)}{m^{d+1}}
	$$
	and 
	$$
	G_2(n_0,\bn) = \lim_{m\rightarrow \infty}\frac{\dim_\kk \left( J(1)_p^{mn_1}\cdots J(r)_p^{mn_r}/{\left(\mm^p\right)}^{mn_0}J(1)_p^{mn_1}\cdots J(r)_p^{mn_r} \right)}{m^{d+1}},
	$$
	respectively.
	Due to \autoref{lem_eq_mixed_mult_two_defs} we have that 
	$$
	G_1(n_0,\bn) = \sum_{d_0+|\dd| = d} \frac{1}{(d_0+1)!\dd!}\, e_{(d_0,\dd)}\left(\mm|J(1)_p,\ldots,J(r)_p\right)\, n_0^{d_0+1}\bn^\dd.
	$$
	and 
	$$
	G_2(n_0,\bn) = \sum_{d_0+|\dd| = d} \frac{1}{(d_0+1)!\dd!}\, e_{(d_0,\dd)}\left(\mm^p|J(1)_p,\ldots,J(r)_p\right)\, n_0^{d_0+1}\bn^\dd.
	$$
	Since $G_1(pn_0,\bn) = G_2(n_0,\bn)$, by comparing the coefficients of $G_1(pn_0,\bn)$ and $G_2(n_0,\bn)$, we obtain that $e_{(d_0,\dd)}\left(\mm^p|J(1)_p,\ldots,J(r)_p\right) = p^{d_0+1}e_{(d_0,\dd)}\left(\mm|J(1)_p,\ldots,J(r)_p\right)$.
	Hence the equality 
	$$ \lim_{p\to \infty} \frac{e_{(d_0,\dd)}(\mm^p \mid J(1)_p,\ldots,J(r)_p)}{p^{d+1}} \;=\; \lim_{p\to \infty} \frac{e_{(d_0,\dd)}(\mm \mid J(1)_p,\ldots,J(r)_p)}{p^{|\dd|}}
	$$
	 is clear.
	 Finally, the proof is concluded by invoking \autoref{thm_mult_eq_vol}.
\end{proof}

\section*{Acknowledgments}

The second  author is  supported by NSF Grant DMS \#2001645.

\bibliographystyle{elsarticle-num} 
\begin{bibdiv}
\begin{biblist}

\bib{bernshtein}{article}{
      author={Bernshtein, David~N},
       title={The number of roots of a system of equations},
        date={1975},
     journal={Functional Analysis and its applications},
      volume={9},
      number={3},
       pages={183\ndash 185},
}

\bib{MIXED_MULT}{article}{
      author={Cid-Ruiz, Yairon},
       title={Mixed multiplicities and projective degrees of rational maps},
        date={2021},
        ISSN={0021-8693},
     journal={Journal of Algebra},
      volume={566},
       pages={136 \ndash  162},
  url={http://www.sciencedirect.com/science/article/pii/S0021869320304671},
}

\bib{cutkosky2013}{article}{
      author={Cutkosky, Steven},
       title={Multiplicities associated to graded families of ideals},
        date={2013},
     journal={Algebra \& Number Theory},
      volume={7},
      number={9},
       pages={2059\ndash 2083},
}

\bib{cutkosky2019}{article}{
      author={Cutkosky, Steven},
      author={Sarkar, Parangama},
      author={Srinivasan, Hema},
       title={Mixed multiplicities of filtrations},
        date={2019},
     journal={Transactions of the American Mathematical Society},
      volume={372},
      number={9},
       pages={6183\ndash 6211},
}

\bib{cutkosky2014}{article}{
      author={Cutkosky, Steven~Dale},
       title={Asymptotic multiplicities of graded families of ideals and linear
  series},
        date={2014},
     journal={Advances in Mathematics},
      volume={264},
       pages={55\ndash 113},
}

\bib{cutkosky2015}{article}{
      author={Cutkosky, Steven~Dale},
       title={A general volume= multiplicity formula},
        date={2015},
     journal={Acta Mathematica Vietnamica},
      volume={40},
      number={1},
       pages={139\ndash 147},
}

\bib{ein2003}{article}{
      author={Ein, Lawrence},
      author={Lazarsfeld, Robert},
      author={Smith, Karen~E},
       title={Uniform approximation of abhyankar valuation ideals in smooth
  function fields},
        date={2003},
     journal={American Journal of Mathematics},
      volume={125},
      number={2},
       pages={409\ndash 440},
}

\bib{EWALD}{book}{
      author={Ewald, G\"{u}nter},
       title={Combinatorial convexity and algebraic geometry},
      series={Graduate Texts in Mathematics},
   publisher={Springer-Verlag, New York},
        date={1996},
      volume={168},
}

\bib{GORTZ_WEDHORN}{book}{
      author={G\"{o}rtz, Ulrich},
      author={Wedhorn, Torsten},
       title={Algebraic geometry {I}},
      series={Advanced Lectures in Mathematics},
   publisher={Vieweg + Teubner, Wiesbaden},
        date={2010},
        ISBN={978-3-8348-0676-5},
         url={https://doi.org/10.1007/978-3-8348-9722-0},
        note={Schemes with examples and exercises},
}

\bib{HERMANN_MULTIGRAD}{article}{
      author={Herrmann, Manfred},
      author={Hyry, Eero},
      author={Ribbe, J\"{u}rgen},
      author={Tang, Zhongming},
       title={Reduction numbers and multiplicities of multigraded structures},
        date={1997},
     journal={J. Algebra},
      volume={197},
      number={2},
       pages={311\ndash 341},
}

\bib{herzog2007}{article}{
      author={Herzog, J{\"u}rgen},
      author={Hibi, Takayuki},
      author={Trung, Ng{\^o}~Vi{\^e}t},
       title={Symbolic powers of monomial ideals and vertex cover algebras},
        date={2007},
     journal={Advances in Mathematics},
      volume={210},
      number={1},
       pages={304\ndash 322},
}

\bib{Huh12}{article}{
      author={Huh, June},
       title={Milnor numbers of projective hypersurfaces and the chromatic
  polynomial of graphs},
        date={2012},
     journal={Journal of the American Mathematical Society},
      volume={25},
      number={3},
       pages={907\ndash 927},
}

\bib{huneke2006integral}{book}{
      author={Huneke, Craig},
      author={Swanson, Irena},
       title={Integral closure of ideals, rings, and modules},
   publisher={Cambridge University Press},
        date={2006},
      volume={13},
}

\bib{KAVEH_KHOVANSKII}{article}{
      author={Kaveh, Kiumars},
      author={Khovanskii, Askold~G},
       title={Newton-{O}kounkov bodies, semigroups of integral points, graded
  algebras and intersection theory},
        date={2012},
     journal={Annals of Mathematics},
       pages={925\ndash 978},
}

\bib{kaveh2014}{article}{
      author={Kaveh, Kiumars},
      author={Khovanskii, Askold~G},
       title={Convex bodies and multiplicities of ideals},
        date={2014},
     journal={Proceedings of the Steklov Institute of Mathematics},
      volume={286},
      number={1},
       pages={268\ndash 284},
}

\bib{kho78}{article}{
      author={Khovanskii, Askold~G},
       title={Newton polyhedra and the genus of complete intersections},
        date={1978},
     journal={Functional Analysis and its applications},
      volume={12},
      number={1},
       pages={38\ndash 46},
}

\bib{kouchnirenko}{article}{
      author={Kouchnirenko, Anatoli~G},
       title={Polyedres de newton et nombres de milnor},
        date={1976},
     journal={Inventiones mathematicae},
      volume={32},
      number={1},
       pages={1\ndash 31},
}

\bib{lazarsfeld09}{inproceedings}{
      author={Lazarsfeld, Robert},
      author={Mustaț{\u{a}}, Mircea},
       title={Convex bodies associated to linear series},
        date={2009},
   booktitle={Annales scientifiques de l'{\'e}cole normale sup{\'e}rieure},
      volume={42},
       pages={783\ndash 835},
}

\bib{must02}{article}{
      author={Musta{\c{t}}ǎ, Mircea},
       title={On multiplicities of graded sequences of ideals},
        date={2002},
     journal={Journal of Algebra},
      volume={256},
      number={1},
       pages={229\ndash 249},
}

\bib{SCHNEIDER}{book}{
      author={Schneider, Rolf},
       title={Convex bodies: the {B}runn-{M}inkowski theory},
     edition={expanded},
      series={Encyclopedia of Mathematics and its Applications},
   publisher={Cambridge University Press, Cambridge},
        date={2014},
      volume={151},
}

\bib{TRUNG_VERMA_SURVEY}{article}{
      author={Trung, N.~V.},
      author={Verma, J.~K.},
       title={Hilbert functions of multigraded algebras, mixed multiplicities
  of ideals and their applications},
        date={2010},
     journal={J. Commut. Algebra},
      volume={2},
      number={4},
       pages={515\ndash 565},
}

\bib{TRUNG_VERMA_MIXED_VOL}{article}{
      author={Trung, Ngo~Viet},
      author={Verma, Jugal},
       title={Mixed multiplicities of ideals versus mixed volumes of
  polytopes},
        date={2007},
     journal={Trans. Amer. Math. Soc.},
      volume={359},
      number={10},
       pages={4711\ndash 4727},
}

\end{biblist}
\end{bibdiv}

\end{document}